\DeclareMathOperator{\Gr}{Gr}
\DeclareMathOperator{\SL}{SL}
\newcommand{\C}{{\mathbb C}}
\newcommand{\Z}{{\mathbb Z}}
\newcommand{\cO}{{\mathcal O}}
\newcommand{\wtE}{\widetilde{E}}
\newcommand{\wto}{\widetilde{\omega}}
\DeclareMathOperator{\Fl}{Fl}
\newcommand{\IG}{\mathrm{IG}}
\newcommand{\QH}{\mathrm{QH}}
\DeclareMathOperator{\Sp}{Sp}
\newcommand{\IF}{\mathrm{IF}}
\DeclareMathOperator{\e}{\mathbf{e}}
\newcommand{\Mb}{\overline{\mathcal{M}}}
\DeclareMathOperator{\Span}{Span}
\DeclareMathOperator{\ev}{ev}
\DeclareFontFamily{OT1}{rsfs}{}
\DeclareFontShape{OT1}{rsfs}{n}{it}{<-> rsfs10}{}
\DeclareMathAlphabet{\mathscr}{OT1}{rsfs}{n}{it}
\newtheorem{thm}{Theorem}[section]
\newtheorem{lemma}[thm]{Lemma}
\newtheorem{cor}[thm]{Corollary}
\newtheorem{prop}[thm]{Proposition}
\theoremstyle{defn} \newtheorem{defn}[thm]{Definition}}
\theoremstyle{remark} \newtheorem{remark}[thm]{Remark}
\newtheorem{example}[thm]{Example}}
\begin{document}

\title[Equivariant Quantum Cohomology of the Odd Symplectic Grassmannian]{
Equivariant Quantum Cohomology of the Odd Symplectic Grassmannian
}
\author{Leonardo C.~Mihalcea}
\author{Ryan M. Shifler}
\address{
Department of Mathematics,
Virginia Tech,
Blacksburg, VA 24061
}
\email{lmihalce@math.vt.edu}
\address{
Department of Mathematics,
Virginia Tech,
Blacksburg, VA 24061
}
\email{twigg@vt.edu}
\thanks{L.M. was supported in part by NSA Young Investigator Award 98320-16-1-0013 and a Simons Collaboration grant.}
\subjclass[2010]{Primary 14N35; Secondary 14N15, 14M15}

%\maketitle
\begin{abstract}
The odd symplectic Grassmannian $\IG:=\IG(k, 2n+1)$ parametrizes $k$ dimensional subspaces of $\mathbb{C}^{2n+1}$ which are isotropic with respect to a general (necessarily degenerate) symplectic form. The odd symplectic group acts on $\IG$ with two orbits, and $\IG$ is itself a smooth Schubert variety in the submaximal isotropic Grassmannian $\IG(k, 2n+2)$. We use the technique of curve neighborhoods to prove a Chevalley formula in the equivariant quantum cohomology of IG,~i.e.~a formula to multiply a Schubert class by the Schubert divisor class. This generalizes a formula of Pech in the case $k=2$, and it gives an algorithm to calculate any multiplication in the equivariant quantum cohomology ring.
\end{abstract}

\maketitle

%%%

%
%\setcounter{tocdepth}{1}
%\setcounter{topnumber}{1}
%\setcounter{bottomnumber}{1}
%
%\tableofcontents

\section{Introduction}\label{intro}
Let $E:= \C^{2n+1}$ be an odd-dimensional complex vector space and $1 \le k \le n+1$. An {\em odd-symplectic form} $\omega$ on $E$ is a skew-symmetric bilinear form with kernel of dimension $1$. The {\em odd-symplectic Grassmannian} $\IG:= \IG(k, E)$ parametrizes $k$-dimensional linear subspaces of $E$ which are isotropic with respect to $\omega$. One can find vector spaces $F \subset E \subset \widetilde{E}$ such that $\dim F = 2n$, $\dim \widetilde{E} = 2n+2$, the restriction of $\omega$ to $F$ is non-degenerate, and $\omega$ extends to a symplectic form (hence non-degenerate) on $\widetilde{E}$. Then the odd-symplectic Grassmannian is an intermediate space
\begin{equation}\label{E:evenodd} \IG(k-1, F) \subset \IG(k, E) \subset \IG(k, \widetilde{E}) \/, \end{equation} sandwiched between two symplectic Grassmannians. This and the more general ``odd-symplectic partial flag varieties" have been studied by Mihai \cite{mihai:odd} and Pech \cite{pech:quantum}. In particular, Mihai showed that $\IG(k, E)$ is a smooth Schubert variety in $\IG(k, \widetilde{E})$, and that it admits an action of Proctor's {\em odd-symplectic group} $\Sp_{2n+1}$ (see \cite{proctor:oddsymgrps}). If $k \neq n+1$ then the odd-symplectic group acts on $\IG(k, E)$ with $2$ orbits, and the closed orbit can be identified with $\IG(k-1, F)$. If $k=1$ then $\IG(1, E) = \mathbb{P}(E)$ and if $k=n+1$ then $\IG(n+1, E)$ is isomorphic to the Lagrangian Grassmannian $\IG(n, F)$.

In this paper we are concerned with the study of the quantum cohomology ring $\QH^*_T(\IG)$ of the odd-symplectic Grassmannians, and its $T$-equivariant version, where $T$ is the maximal torus in $\Sp_{2n+1}$. Since $\IG$ is a Schubert variety in the symplectic Grassmannian $\IG(k, \wtE )$ it follows that the (equivariant) fundamental classes of those Schubert varieties $X(u) \subset \IG(k, \wtE )$ included in $\IG$ form a basis for the (co)homology ring $H^*(\IG)$; we call this the {\em Schubert basis}. This implies that the graded algebra $\QH^*_T(\IG)$ has a Schubert basis $[X(u)]_T$ over $H^*_T(pt)[q]$, indexed by a particular subset of Schubert classes in the quantum cohomology of $\QH^*_T(\IG(k, \widetilde{E}))$. There are Schubert multiplication formulas in $\QH^*_T(\IG)$, \[ [X(u)]_T \star [X(v)]_T = \sum_{w, d} c_{u,v}^{w,d} q^d [X(w)]_T  \/, \] where $c_{u,v}^{w,d}$ are the (equivariant) Gromov-Witten (GW) invariants for rational curves of degree $d$ in $\IG$, and $q$ is the quantum parameter. 

Our main result is a combinatorial formula for the multiplication $[X(u)]_T \star [D]_T$ of any Schubert class by the Schubert divisor class $[D]_T$. This is called the (equivariant, quantum) {\em Chevalley formula}. Before stating this formula explicitly, we discuss its significance. 

It has been known at least since Knutson and Tao's famous paper \cite{knutson.tao} that, despite the fact that the Schubert divisor does not generate the cohomology ring, the {\em equivariant} Chevalley formula gives a triangular system of equations calculating the Schubert structure constants.\begin{footnote}{This can be explained by using that the {\em localized} equivariant cohomology is generated by divisors; see \cite[\S 5]{BCMP:eqkt}.}\end{footnote} Knutson and Tao worked in the geometric context of the (ordinary) equivariant cohomology of the Grassmannian, and they used previous work of Okounkov-Olshanski \cite{okounkov,okounkov.olshanski} and Molev-Sagan \cite{molev.sagan} who studied a certain deformation of Schur polynomials, called factorial Schur functions. This system was extended to the equivariant quantum cohomology ring of flag manifolds in \cite{mihalcea:eqschub,mihalcea:eqqhom} and very recently to quantum K theory \cite{BCMP:eqkt}, but in these cases it is no longer triangular. In this paper we further extend these results to the case of odd-symplectic Grassmannians: we use the Chevalley  formula to obtain an algorithm for calculating any structure constant $c_{u,v}^{w,d}$; see theorem \ref{thm:alg} below. Its corollary \ref{cor:alg}, stated in the next section, makes precise the sense in which this formula determines the ring structure.  

The Chevalley formula technique was previously used to solve several cases of the {\em Giambelli problem}: find a presentation of the (equivariant, quantum) cohomology ring by generators and relations, then identify the polynomials which represent Schubert classes; see~e.~g.~\cite{tamvakis:giambelli} for more on the history of this problem.~Such results were obtained for the equivariant quantum cohomology ring of the Grassmannian \cite{mihalcea:giambelli}, of the orthogonal and Lagrangian Grassmannians \cite{IMN:factorial}, and of the equivariant cohomology of non-maximal isotropic Grassmannians \cite{tamvakis.wilson}. Although we do not pursue this application in this note, we believe that the Chevalley technique will be a key ingredient. A third application, for which we dedicate the upcoming paper \cite{li.mihalcea.shifler:O} joint with C.~Li, is to verify Galkin, Golyshev and Iritani's Conjecture $\mathcal{O}$ \cite{GGI,cheong.li:O} for the odd-symplectic Grassmannians. This uses the explicit combinatorial formulation of the Chevalley formula. 

\subsection{Statement of results} To state the Chevalley formula, we need to introduce a variant of $k$-strict partitions of Buch, Kresch and Tamvakis \cite{BKT2}. This variant was used by Pech in \cite{pech:thesis} to study the ordinary cohomology ring of $\IG$, and the quantum cohomology ring of $\IG(2,E)$ in \cite{pech:quantum}. 

A partition $(\lambda_1 \geq \lambda_2 \geq \cdots \geq \lambda_k)$ is $(n-k)$-strict if $\lambda_j>n-k$ implies $\lambda_j >\lambda_{j+1}$. Let $\Lambda$ be the set of $(n-k)$-strict partitions $(2n+1-k \geq \lambda_1 \geq \cdots \geq \lambda_k \geq -1)$ such that if $\lambda_k=-1$ then $\lambda_1=2n+1-k$. For each $\lambda \in \Lambda$ there is a Schubert variety in $X(\lambda) \subset \IG$ of codimension $|\lambda|:= \lambda_1 + \ldots + \lambda_k$. If $\lambda_1=2n+1-k$ and $\lambda_k \geq 0$ then let \[\lambda^{*}=(\lambda_2 \geq \lambda_3 \geq \cdots \geq \lambda_k \geq 0).\] If $\lambda_1<2n+1-k$ or $\lambda_k=-1$ then $\lambda^{*}$ does not exist. If $\lambda_1=2n+1-k$ and $\lambda_2=2n-k$ then let \[\lambda^{**}=(\lambda_1 \geq \lambda_3 \geq \cdots \geq \lambda_k \geq -1).\] If $\lambda_2<2n-k$ then $\lambda^{**}$ does not exist. With this notation, the Schubert divisor is $D= X(1)$. 
\begin{thm}[Quantum Chevalley formula]\label{Chevalley} Let $\lambda \in \Lambda$. Then the following holds in the equivariant quantum cohomology ring $\QH^*_T(\IG(k, \C^{2n+1})$: 
\begin{eqnarray}\label{E:eqchevintro}
[X(1)]_T \star [X(\lambda)]_T=\mbox{Classical Part}+q[X(\lambda^*)]_T+q[X(\lambda^{**})]_T \/.
\end{eqnarray}
The terms involving $\lambda^*$ or $\lambda^{**}$ are omitted if the corresponding partitions do not exist. The classical part consists of terms which do not involve $q$, and it is combinatorially explicit; see Theorem \ref{thm:eqqchev} below.
\end{thm}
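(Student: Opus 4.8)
The plan is to separate the product according to the power of $q$: the $q^0$ part is the classical equivariant Chevalley formula on $\IG$, and the rest collects the genuinely quantum corrections, which we will show reduce to degree $d=1$. For the classical part one exploits that $\IG$ is a smooth Schubert variety of the symplectic Grassmannian $\IG(k,\wtE)$ (in fact its Schubert divisor): the Schubert varieties of $\IG(k,\wtE)$ contained in $\IG$ form a lower order ideal in the Bruhat order, and classical multiplication by a divisor class only produces Schubert varieties one step lower in that order, so the classical part of \eqref{E:eqchevintro} is the restriction to $\IG$ of the known equivariant Chevalley formula of $\IG(k,\wtE)$, all of whose terms are automatically indexed by $\Lambda$. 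Equivalently --- and this is the form we will use for the combinatorics --- it can be read off from the GKM/moment-graph presentation of $\IG$, whose graph is the subgraph of the moment graph of $\IG(k,\wtE)$ on the $T$-fixed points lying in $\IG$ together with the $T$-stable lines contained in $\IG$; this produces the explicit formula of Theorem~\ref{thm:eqqchev}.

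For the quantum part, the quantum parameter $q$ has degree $2n+2-k$, equivalently $\int_{\beta_1}[X(1)]=1$ where $\beta_1$ is the class of a line. Since $[X(1)]_T\in H^2_T(\IG)$, the equivariant divisor axiom gives $\langle[X(1)]_T,[X(\lambda)]_T,[X(w)^\vee]_T\rangle_{0,3,d}=d\,\langle[X(\lambda)]_T,[X(w)^\vee]_T\rangle_{0,2,d}$ for every $d\ge 1$; in particular the quantum terms carry no $H^*_T(pt)$-coefficient. Summing over $w$ via Poincaré duality gives
\[
[X(1)]_T\star[X(\lambda)]_T=[X(1)]_T\cdot[X(\lambda)]_T+\sum_{d\ge 1}d\,q^d\,(\ev_2)_*\bigl(\ev_1^*[X(\lambda)]_T\cap[\Mb_{0,2}(\IG,d)]^{\vir}\bigr),
\]
so the degree-$d$ quantum term is the virtual equivariant fundamental class of the degree-$d$ curve neighborhood $\Gamma_d(X(\lambda))$. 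Using the recursive structure $\Gamma_d(X(\lambda))=\Gamma_1\bigl(\Gamma_{d-1}(X(\lambda))\bigr)$ of curve neighborhoods (valid since $\IG$ is covered by lines) one gets $\dim\Gamma_d(X(\lambda))\le\dim X(\lambda)+d(2n+1-k)$, because the lines of $\IG$ through a point sweep out a subvariety of dimension at most $2n+1-k$; this is strictly less than the virtual dimension $\dim X(\lambda)+d(2n+2-k)-1$ of the cycle it supports as soon as $d\ge 2$, so the pushforward vanishes and only $d=1$ survives.

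It remains to prove $[\Gamma_1(X(\lambda))]_T=[X(\lambda^*)]_T+[X(\lambda^{**})]_T$, omitting the term whose partition does not exist; this is the heart of the argument. Inside $\IG(k,\wtE)$ a line of $\IG$ through $W$ is a pencil $\{W':A\subset W'\subset B\}$ with $\dim A=k-1$, $\dim B=k+1$, subject to the appropriate isotropy conditions, and $X(\lambda)$ is an incidence locus with respect to a fixed isotropic flag adapted to $F\subset E\subset\wtE$; the curve neighborhood is the union of all such pencils remaining inside $\IG$. The ``generic'' sweep reproduces the degree-one curve neighborhood of $X(\lambda)$ inside $\IG(k,\wtE)$, which by the symplectic quantum Chevalley formula is the Schubert variety $X(\lambda^*)$; the extra component $X(\lambda^{**})$ appears exactly when the pencil is forced to degenerate into the closed orbit $\IG(k-1,F)$ --- the new odd-symplectic phenomenon, invisible in the even case. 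One then checks that, when $\lambda^*$ and $\lambda^{**}$ both exist, $\Gamma_1(X(\lambda))$ is reduced with precisely these two irreducible components, each of the expected codimension $|\lambda|-(2n+1-k)$, with $\ev_2$ birational onto each; hence $[\Gamma_1(X(\lambda))]_T=[X(\lambda^*)]_T+[X(\lambda^{**})]_T$, and with $\int_{\beta_1}[X(1)]=1$ the quantum part of \eqref{E:eqchevintro} is $q[X(\lambda^*)]_T+q[X(\lambda^{**})]_T$. (Alternatively the degree-one invariants can be computed through a quantum-to-classical principle relating degree-one stable maps to an odd-symplectic two-step flag variety, generalizing the $k=2$ treatment of \cite{pech:quantum}.)

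The step I expect to be the main obstacle is exactly this degree-one curve-neighborhood computation. Because $\IG$ is not homogeneous, a line of $\IG(k,\wtE)$ through a point of $\IG$ need not lie in $\IG$, so $\Gamma_1(X(\lambda))$ is a proper subvariety of $\Gamma_1^{\IG(k,\wtE)}(X(\lambda))\cap\IG$ and cannot be handled by citing the homogeneous case; one must determine precisely which pencils are forced into the closed orbit (this governs the $\lambda^{**}$ term and the cases where $\lambda^*$ or $\lambda^{**}$ fails to exist), and one must control reducedness and the degree of $\ev_2$ so that each component enters with coefficient $1$ and no equivariant correction --- consistent with the divisor axiom. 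Carrying out this analysis uniformly is cleanest in the combinatorial language of the moment graph of $\IG$, where both the classical Chevalley coefficients and the degree-one curve neighborhoods are encoded by chains of $T$-stable lines; this bookkeeping is what produces the explicit statement of Theorem~\ref{thm:eqqchev}.
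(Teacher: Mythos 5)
Your overall strategy coincides with the paper's: isolate the quantum corrections via the divisor axiom, reduce them to curve neighborhoods, kill degrees $d\ge 2$ by a dimension estimate, and then compute the remaining degree-$1$ contribution by identifying the two components of the line neighborhood. However, three steps you assert or sketch are exactly where the paper has to do real work, and as written your argument has genuine gaps at each.

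First, the identity $\Gamma_d(X(\lambda))=\Gamma_1(\Gamma_{d-1}(X(\lambda)))$ is not a consequence of ``$\IG$ is covered by lines'': it requires that every degree-$d$ rational curve in $\IG$ degenerate, \emph{inside} $\IG$, to a chain of lines. This is a nontrivial connectedness statement that is known for $G/P$ but does not follow formally for a quasi-homogeneous space, and the paper deliberately avoids invoking it. Instead it uses the moment-graph description (Proposition \ref{thm11}): $\Gamma_d(X(w))$ is determined by chains of $T$-stable curves, which for $\IG(k,2n+2)$ come in degrees $1$ \emph{and} $2$ (Section \ref{s:moment}, case (iii)). The paper's Theorem \ref{Lem10} then bounds the length increase along degree-$1$ and degree-$2$ edges separately, using the explicit $z_2W_P = s_{t_1+t_2}W_P$ computation of Corollary \ref{cor:evest}. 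Your bound $\dim\Gamma_d - \dim X(\lambda)\le d(2n+1-k)$ happens to be numerically correct, but the reasoning behind it is not, because it silently discards the degree-$2$ $T$-stable curves.

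Second, you also need the vanishing of the degree-$1$ term when $X(\lambda)\not\subseteq X_c$; this is the last inequality of Theorem \ref{Lem10}, which relies on Lemma \ref{Lem11} and is not covered by the ``$d\ge 2$'' estimate. You do not address this case; in your framework, when neither $\lambda^*$ nor $\lambda^{**}$ exists the cycle $(\ev_2)_*\ev_1^*[X(\lambda)]_T$ must vanish, and this has to be proved rather than read off from the non-existence of the partitions.

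Third, and most substantially, the degree-$1$ computation that you flag as the ``heart of the argument'' is essentially the content of Sections \ref{s:lines}--\ref{s:gwlines}. One must show that $\mathrm{GW}_1(X(w))$ has exactly two irreducible, generically reduced components of expected dimension (Corollary \ref{cor:GW}), identify $\Gamma_1(X(w))=X(w\cdot O_1 W_P)\cup X(w\cdot O_2 W_P)$ (Theorem \ref{thm:curveNBHDS}), and prove $\ev_2$ is birational over each component of expected dimension (Theorem \ref{thm:gw1}); only then does the GW invariant equal $1$ with no equivariant correction. The paper's proof requires the local coordinate analysis of $\mathcal{M}^\circ$ (Theorem \ref{thm:pigeom}), the identification of the orbit $K_2$, the isotropy Lemma \ref{iso}, the flatness of $\ev_1$, and a $\mathbb{C}^*$-degeneration argument for birationality. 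Your sketch correctly predicts the structure of the answer but does not supply these arguments.

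Finally, a smaller but real point: the classical equivariant part is \emph{not} simply the restriction of the Chevalley formula of $\IG(k,\widetilde{E})$. Because $\iota^*[Y(1)]_T=[X(1)]_T + C(t)$ with $C(t)$ a nonzero linear form (Lemma \ref{lemma:loc}), the diagonal coefficient $c_{(1),\lambda}^{\lambda}$ picks up the correction $-C(t)$, which is precisely what makes Theorem \ref{thm:eqqchev} have the $(w_0(\omega_k)-w_\lambda(\omega_k))$ term. Stating the classical part as a naive restriction would give the wrong equivariant formula.
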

This recovers Pech's results in \cite{pech:quantum} for the non-equivariant ring $\QH^*(\IG(2,E))$ and it verifies a conjecture for $\QH^*(\IG(3,E))$ stated in \cite{pech:thesis}. It is an easy exercise to check that for $k=1$ one obtains the quantum Chevalley formula in $\QH^*_T(\mathbb{P}^{2n})$ and that for $k= n+1$ it recovers the Chevalley formula for the Lagrangian Grassmannian $\IG(n ,F)$ from \cite{KT:qlagr,BKT}. As we mentioned above, the Chevalley multiplication yields an algorithm to calculate {\em any} equivariant GW invariant $c_{u,v}^{w,d}$, see \S \ref{s:alg} below. Its immediate corollary is:

\begin{cor}\label{cor:alg} Let $(A, +, \circ)$ be a graded, commutative, $H^*_T(pt)[q]$-algebra, with a $H^*_T(pt)[q]$-basis $ \{ a_\lambda \}_{\lambda \in \Lambda}$. Assume that the grading is the same as the one for the equivariant quantum ring $\QH^*_T(\IG)$, and assume that the Chevalley rule (\ref{E:eqchevintro}) holds in the basis $\{ a_\lambda \}$. Then the isomorphism of $H^*_T(pt)$-modules $A \to \QH^*_T(\IG(k, 2n+1))$ sending $a_\lambda \mapsto [X(\lambda)]_T$ is an isomorphism of algebras.\end{cor}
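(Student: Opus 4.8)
The plan is to show that the Chevalley rule, together with the given grading, determines all structure constants of a commutative $H^*_T(pt)[q]$-algebra with basis $\{a_\lambda\}_{\lambda\in\Lambda}$, and then to compare with the known structure constants of $\QH^*_T(\IG)$. The starting observation is that the Schubert divisor class $[X(1)]_T$ generates $\QH^*_T(\IG)$ as an algebra over the localized coefficient ring: this is the standard phenomenon, referenced in the footnote via \cite{BCMP:eqkt} and originating with \cite{knutson.tao,mihalcea:eqqhom}, that after localizing $H^*_T(pt)$ at the nonzero characters, $\QH^*_T(\IG)$ is generated by divisors; here there is a single Schubert divisor $D=X(1)$. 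Hence every class $[X(\lambda)]_T$ can be written, over the localized ring $\mathrm{Frac}(H^*_T(pt))[q]$, as a polynomial in $[X(1)]_T$; applying this polynomial to $a_{\emptyset}$ (which must map to $[X(\emptyset)]_T = 1$, the unit of $A$, since it is the unique degree-zero basis element) and repeatedly using the Chevalley rule in both rings shows that the module isomorphism $\varphi: A \to \QH^*_T(\IG)$, $a_\lambda \mapsto [X(\lambda)]_T$, is in fact an algebra isomorphism \emph{after localization}.

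The key step making this rigorous is the triangularity/recursion built into the \emph{equivariant} Chevalley formula. First I would make precise the degree filtration: $\deg a_\lambda = \deg [X(\lambda)]_T = |\lambda|$ in the cohomological grading, with $\deg q$ equal to the (fixed) Fano index. Second, I would invoke the equivariant Chevalley recursion: multiplying $[X(\lambda)]_T$ by $[X(1)]_T$ produces $[X(\lambda)]_T$ times a scalar in $H^*_T(pt)$ (coming from the equivariant self-intersection term, i.e. the weight of the torus acting on the normal direction at the relevant fixed point), plus classes $[X(\mu)]_T$ with $|\mu|=|\lambda|+1$ or quantum terms $q[X(\nu)]_T$ with strictly smaller $|\nu|$. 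Crucially, for the classical non-equivariant top term, the coefficient of each cover $\mu$ with $|\mu|=|\lambda|+1$ is a nonzero integer. This is exactly the input that lets one solve, by downward induction on codimension, for $[X(\mu)]_T$ in terms of $[X(1)]_T\star[X(\lambda)]_T$ minus lower/other terms — the system is triangular with invertible (after localization) diagonal, as in \cite[\S 5]{BCMP:eqkt} and \cite{mihalcea:eqqhom}. The same recursion run inside $A$, using that (\ref{E:eqchevintro}) holds there by hypothesis and that $A$ and $\QH^*_T(\IG)$ have identical gradings, expresses each $a_\mu$ through the same universal formula; since $\varphi$ is already a module map sending divisor to divisor, it intertwines these recursions, hence $\varphi(a_\lambda \circ a_\mu) = [X(\lambda)]_T \star [X(\mu)]_T$ for all $\lambda,\mu$.

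Finally I would remove the localization. The products $a_\lambda\circ a_\mu$ and $\varphi^{-1}([X(\lambda)]_T\star[X(\mu)]_T)$ are both honest elements of the non-localized algebra $A$ (with coefficients in $H^*_T(pt)[q]$), and we have just shown they become equal after the flat base change $H^*_T(pt)[q]\hookrightarrow \mathrm{Frac}(H^*_T(pt))[q]$; since this base change is injective, they are equal already in $A$. Therefore $\varphi$ is a ring homomorphism, and being a bijective module map it is an algebra isomorphism, which is exactly Corollary \ref{cor:alg}. The main obstacle is the second step: one must verify carefully that the equivariant quantum Chevalley formula of Theorem \ref{Chevalley} (equivalently Theorem \ref{thm:eqqchev}) is genuinely triangular with respect to codimension, i.e. that when $[X(1)]_T\star[X(\lambda)]_T$ is expanded, the only codimension-$|\lambda|$ term is the equivariant multiple $c\,[X(\lambda)]_T$ with the correct nonzero weight $c$, and that each codimension-$(|\lambda|+1)$ term really does occur with nonzero coefficient — this is what guarantees the recursion can be inverted. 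Given the explicit combinatorial description promised in Theorem \ref{thm:eqqchev}, this is a bookkeeping check on $(n-k)$-strict partitions, but it is the heart of why the corollary holds.
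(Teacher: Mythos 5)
Your overall strategy — localize at the equivariant parameters, use that the Schubert divisor generates, argue that the module isomorphism intertwines the Chevalley operator and hence is multiplicative, then de-localize by injectivity of $H^*_T(pt)[q]\hookrightarrow\mathrm{Frac}(H^*_T(pt))[q]$ — is a valid route and does reach the corollary, but it is not the one the paper takes. The paper deduces Corollary \ref{cor:alg} immediately from Theorem \ref{thm:alg}: the structure constants of $\QH^*_T(\IG)$ are uniquely pinned down by the initial condition $c_{(0),(0)}^{(0),d}=\delta_{d,0}$, commutativity, the Chevalley coefficients, and the associativity recursion of Proposition \ref{prop:recursion}; since $A$ is graded, commutative, associative, and satisfies the same Chevalley rule, its structure constants obey the same system and must therefore coincide. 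Your proof instead outsources the heavy lifting to the generation result of \cite[\S 5]{BCMP:eqkt}.

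However, the middle paragraph contains a genuine imprecision that, as written, would not survive scrutiny. You claim one can ``solve, by downward induction on codimension, for $[X(\mu)]_T$ in terms of $[X(1)]_T\star[X(\lambda)]_T$ minus lower/other terms'' because the codimension-$(|\lambda|+1)$ covers appear with nonzero integer coefficients. This does not work: when $\lambda$ has several covers the Chevalley product determines only a single linear combination of them. For example, in $\QH^*_T(\IG(2,5))$ one has $[X(1)]\star[X(1)]=-2t_3[X(1)]+[X(3,-1)]+2[X(2)]$, which fixes $[X(3,-1)]+2[X(2)]$ but neither summand individually. The actual invertibility used by the paper (Proposition \ref{prop:recursion} and the discussion around it) is of a different nature: the unknown $c_{\lambda,\mu}^{\nu,d}$ is multiplied by the equivariant scalar $w_\nu(\omega_k)-w_\lambda(\omega_k)$, which is nonzero exactly when $\lambda\neq\nu$, and the induction is on $d$ and on Bruhat order in $\lambda$ and $\nu$, not on codimension of the classical Chevalley output. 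If you wish to keep your localization route, you should simply cite \cite[\S 5]{BCMP:eqkt} for the fact that $[X(1)]_T$ generates $\QH^*_T(\IG)\otimes\mathrm{Frac}(H^*_T(pt))$ (the Chevalley matrix being identical in $A$ and in $\QH^*_T(\IG)$ then guarantees $a_{(1)}$ generates $A$ after localization too), and delete the incorrect ``downward induction on codimension'' explanation; alternatively, replace it with the genuine recursion (\ref{E:rec}), which is what the cited references and the paper actually use.
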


The proof of Theorem \ref{Chevalley} is based on the analysis of curve neighborhoods of Schubert varieties \cite{BCMP:qkfin,buch.m:nbhds}. Let $d \in H_2(\IG)$ be an effective degree and let $M_d:= \Mb_{0,2}(\IG, d)$ be the Kontsevich moduli space of stable maps \cite{FP} equipped with evaluation maps $\ev_1, \ev_2: M_d \to \IG$. To a closed subvariety $\Omega \subset \IG$  one can associate its {\em Gromov-Witten variety} $GW_d(\Omega):= \ev_1^{-1}(\Omega)$ and its {\em curve neighborhood} \[ \Gamma_d(\Omega) := \ev_2(GW_d(\Omega)) \subset \IG \/; \] see \S \ref{s:curvenbhds} below. The notion of curve neighborhoods is closely related to quantum cohomology. Roughly, let $X(\lambda) \subset \IG$ be a Schubert variety, and let $\Gamma_d(X(\lambda)) = \Gamma_1 \cup \Gamma_2 \cup \ldots \cup \Gamma_k$ be the decomposition of the curve neighborhood into irreducible components. By the divisor axiom, any component $\Gamma_i$ of ``expected dimension" will contribute to the quantum product $[X(1)]_T \star [X(\lambda)]_T$ with $ d \cdot m_i \cdot q^d [\Gamma_i]_T$, where $m_i$ is the degree of $\ev_2: GW_d(X(u)) \to \Gamma_d(X(\lambda))$ over the given component. Therefore the main task is to find the components $\Gamma_i$ of expected dimension, and calculate the associated multiplicities $m_i$. It was proved in \cite{buch.m:nbhds} that for homogeneous spaces the curve neighborhoods of Schubert varieties are irreducible and that all the multiplicities $m_i =1$. But $\IG$ is no longer homogeneous, and one easily finds reducible curve neighborhoods. Nevertheless, we proved that for any Schubert variety $X(\lambda)$, the only curve neighborhoods $\Gamma_d(X(\lambda))$ of expected dimension are those when $d=1$ and $X(\lambda)$ is included in the closed $\Sp_{2n+1}$-orbit of $\IG$. In this case \[ \Gamma_1(X(\lambda)) = X(\lambda^1) \cup X(\lambda^{2}) \/, \] and the components of the expected dimension correspond respectively to the partitions $\lambda^*, \lambda^{**}$ from Theorem \ref{Chevalley} above. (But it may happen, for instance, that $X(\lambda^1)$ does not have expected dimension, in which case $\lambda^*$ does not exist.) We refer to Theorems \ref{thm:vanishing} and \ref{thm:curveNBHDS} for precise statements. Furthermore, the morphism $\ev_2: GW_1(X(\lambda)) \to \Gamma_1(X(\lambda))$ is birational over the relevant components, thus all multiplicities $m_i =1$. This is achieved in sections \ref{s:lines} and \ref{s:linenbhds} by using a rather delicate analysis of the space of lines in $\IG$. In the course of the proof we showed that each orbit of $\Sp_{2n+1}$ in $\IG$ contributes with at most one component to the curve neighborhood. It is tempting to conjecture that this pattern extends at least to the odd-symplectic partial flag varieties.  

Besides reducibility of curve neighborhoods of Schubert varieties, it is also worth pointing out another 
difference between the (quantum) cohomology of odd-symplectic Grassmannians and that of flag manifolds. Many arguments in the Gromov-Witten theory of homogeneous spaces rely on the Kleiman-Bertini transversality theorem, which makes the GW invariants enumerative. Variants of this theorem exist for varieties with a group acting with finitely many orbits (see e.g. \cite{graber}). But the lack of a transitive group action implies that occasionally cycles in $\IG$ cannot be translated to general position, and that it is possible that certain Schubert multiplications might be non-effective. Indeed, Pech found Pieri-type multiplications, both in ordinary and in quantum cohomology of $\IG(2,E)$, which yield negative structure constants $c_{u,v}^{w,d}$. For more such examples, equivariant or not, {\color{black} see the table for $\QH^*_T(\IG(2,\C^5))$ in \S \ref{s:examples}, and the remark \ref{rmk:pos} below.} We partially circumvented the non-transversality problem by employing the aforementioned technique of curve neighborhoods.

\subsection{Organization of the paper} The sections 2-5 are dedicated to recalling the basic definitions and the relevant facts on (equivariant) quantum cohomology. In section \ref{s:curvenbhds} we discuss curve neighborhoods of Schubert varieties. The main result is Theorem \ref{Lem10} about estimates on the dimension of curve neighborhoods. This is then used in section \ref{s:vanishing} to prove the vanishing of many GW invariants. In sections \ref{s:lines} we study the moduli space of lines on $\IG$; our main result is Corollary \ref{cor:GW} where we prove that if $\Omega$ is a Schubert variety included in the closed orbit then the GW variety $GW_1(\Omega)$, has two irreducible, generically reduced components. This result is used in Section \ref{s:linenbhds} to obtain similar results about the curve neighborhood $\Gamma_1(\Omega)$. In section \ref{s:gwlines} we prove birationality results and use this to calculate all the non-vanishing equivariant GW invariants which appear in the Chevalley formula; see Theorem \ref{thm:gw1} and Corollary \ref{cor:gw1}. In section \ref{s:partitions} we re-interpret all results in terms of $k$-strict partitions, and obtain the statement of Theorem \ref{Chevalley} above. The algorithm to calculate the full multiplication table in $\QH^*_T(\IG)$ is presented in section \ref{s:alg}. {\color{black} Section \ref{s:examples} includes examples of products in $\QH^*_T(\IG(2,\C^5))$ and $\QH^*_T(\IG(3,\C^7))$.}

{\em Acknowledgements.} We would like to thank Dan Orr and Mark Shimozono for discussions and valuable suggestions and to Pierre-Emmanuel Chaput, Changzheng Li, and Nicolas Perrin for discussions and collaborations on related projects. Special thanks are due to Anders Buch for encouragement and interest in this project.
%%%%%%%%%%%%%%%%%%%

\section{Preliminaries}

\subsection{The odd symplectic group} We recall next the definition and basic properties of odd symplectic flag manifolds, following Mihai's paper \cite{mihai:odd}; see also \cite{mihai:thesis, pech:quantum}. Let $E$ be a complex vector space of dimension $\dim_{\C} E = 2n+1$, and let $\omega$ be an odd-symplectic form on $E$, i.e. bilinear, skew-symmetric, with kernel of dimension $1$. The {\em odd symplectic group} is the subgroup of $GL(E)$ which preserves this symplectic form: \[ \Sp_{2n+1}(E) := \{ g \in GL(E): \omega(g.u, g.v) = \omega(u,v), \forall u,v \in E \} \/. \] It will be convenient to extend the form $\omega$ to a nondegenerate symplectic form $\widetilde{\omega}$ on an even dimensional space $\widetilde{E} \supset E$, and to identify $E \subset \widetilde{E}$ with a coordinate hyperplane $\C^{2n+1} \subset \C^{2n+2}$. For that, let $\{ \e_1,\ldots , \e_{n+1}, \e_{\overline{n+1}}, \ldots , \e_{\bar{2}}, \e_{\bar{1}} \}$ be the standard basis of $\widetilde{E}:=\mathbb{C}^{2n+2}$, where $\bar{i}=2n+3-i$. Set $|i|=\min\{i,\bar{i} \}$, and consider $\widetilde{\omega}$ to be the nondegenerate symplectic form on $\widetilde{E}$ defined by \[ \widetilde{\omega}(\e_i,\e_j)=\delta_{i,\bar{j}} \mbox{ for all } 1 \leq i\leq j \leq \bar{1} \/.\] The form $\widetilde{\omega}$ restricts to the degenerate symplectic form $\omega$ on $E:=\mathbb{C}^{2n+1}=\left<\e_1, \e_2,\cdots, \e_{2n+1} \right>$ such that the kernel $\ker \omega$ is generated by $\e_1$. Then \[\omega(\e_i,\e_j)=\delta_{i,\bar{j}} \mbox{ for all } 1 \leq i\leq j \leq \bar{2}. \] Let $F \subset E$ denote the $2n$ dimensional vector space with basis $\{ \e_2, \e_3,\cdots,\e_{2n+1} \}$. Since $F \cap \ker \omega = (0)$ it follows that $\omega$ restricts to a nondegenerate form on $F$. Let $\Sp_{2n}(F)$ and $\Sp_{2n+2}(\widetilde{E})$ denote the symplectic groups which preserve respectively the symplectic form $\omega_{|F}$ and $\wto$. Then with respect to the decomposition $E = F \oplus \ker \omega$ the elements of the odd-symplectic group $\Sp_{2n+1}(E)$ are matrices of the form \begin{eqnarray*}
\Sp_{2n+1}(E)=\left\{\left( \begin{array}{ccc}
\lambda & a  \\
0 & S  \\
 \end{array} \right): \lambda \in \mathbb{C}^*, a \in \mathbb{C}^{2n}, S \in \Sp_{2n}(F) \right\}.
\end{eqnarray*}
The symplectic group $\Sp_{2n}(F)$ embeds naturally into $\Sp_{2n+1}(E)$ by $\lambda = 1$ and $a = 0$, but $\Sp_{2n+1}(E)$ is {\em not} a subgroup of $\Sp_{2n+2}(\widetilde{E})$.\begin{footnote}{However, Gelfand and Zelevinsky \cite{gelfand.zelevinsky} defined  another group $\widetilde{\Sp}_{2n+1}$ closely related to $\Sp_{2n+1}$ such that $\Sp_{2n} \subset \widetilde{\Sp}_{2n+1} \subset \Sp_{2n+2}$.}\end{footnote}~Mihai showed in \cite[Prop. 3.3]{mihai:odd} that there is a surjection $P \to \Sp_{2n+1}(E)$ where $P \subset \Sp_{2n+2}(\wtE)$ is the parabolic subgroup which preserves $\ker \omega$, and the map is given by restricting $g \mapsto g_{|E}$. Then the Borel subgroup $B_{2n+2} \subset \Sp_{2n+2}(\wtE)$ of upper triangular matrices restricts to the (Borel) subgroup $B \subset \Sp_{2n+1}(E)$. Similarly, the maximal torus $T_{2n+2}:=\{ \mbox{diag}(t_1,\cdots,t_{n+1},t_{n+1}^{-1},\cdots, t_1^{-1}): t_1,\cdots,t_{n+1} \in \mathbb{C}^* \} \subset B_{2n+2}$ restricts to the maximal torus \[ T=\{ diag(t_1,\cdots,t_{n+1},t_{n+1}^{-1},\cdots, t_{2}^{-1}): t_1,\cdots,t_{n+1} \in \mathbb{C}^* \} \subset B \/. \]

\subsection{The odd symplectic flag varieties} Let $1 \le i_1 < \ldots < i_r \le n+1$. The {\em odd symplectic flag variety} $\IF(i_1, \ldots , i_r; E)$ consists of flags of linear subspaces $F_{i_1} \subset \ldots \subset F_{i_k} \subset E$ such that $\dim F_{i_j} = i_j$ and $F_{i_j}$ is isotropic with respect to the symplectic form $\omega$. The inclusion $E \subset \wtE$ makes it a closed subvariety of the (even) symplectic flag variety $\IF(i_1, \ldots , i_r; \wtE)$ which consists of similar flags of subspaces, isotropic with respect to the symplectic form $\wto$. The latter is a homogeneous space for $\Sp_{2n+2}(\wtE)$. In fact, the inclusions $F \subset E \subset \wtE$ realize the odd-symplectic flag variety as an intermediate variety between two consecutive symplectic flag varieties: \[ \IF(i_1 -1, \ldots , i_r -1; F) \subset \IF(i_1, \ldots , i_r; E) \subset \IF(i_1, \ldots, i_r; \wtE) \/, \] where the flags in $\IF(i_1 -1, \ldots , i_r -1; F)$ are isotropic with respect to $\omega_{|F}$. If $r=1$ then we obtain the sequence of inclusions of Grassmannians shown in equation (\ref{E:evenodd}). There is a natural embedding of the odd symplectic flag variety as a closed subvariety of the type A partial flag variety $\Fl(i_1, \ldots , i_r; E)$ which parametrizes flags of given dimensions in $E$. It turns out that $\IF(i_1, \ldots , i_r; E)$ is a smooth subvariety of $\Fl(i_1, \ldots, i_r; E)$ of codimension $\frac{i_r(i_r-1)}{2}$; see \cite[Prop. 4.1]{mihai:odd} for details. The odd-symplectic group acts on $\IF(i_1, \dots, i_r;E)$, but the action is no longer transitive. The next result, due to Mihai (see \cite[Propositions 5 and 6]{mihai:odd} describes the orbits of this action.
\begin{prop}\label{eee}
The odd symplectic group $\Sp_{2n+1}(E)$ acts on $\IF(i_1, \cdots, i_r; E)$ with $r+1$ orbits if  $i_r<n+1$ and $r$ orbits if $i_r=n+1$. The orbits are:
\begin{eqnarray*}
\cO_j&=&\{V_{i_1} \subset \cdots \subset V_{i_r} \subset E: e_1 \in V_{i_j}, e_1\notin V_{i_{j-1}} \} \mbox{ for all } 1 \leq j \leq r\\
\mbox{ and } \cO_{r+1}&=&\{V_{i_1} \subset \cdots \subset V_{i_r} \subset E:  e_1\notin V_{i_{r}} \} \mbox{ if } i_r<n+1,
\end{eqnarray*} where by convention $V_{i_0} = (0)$. The only closed orbit is $\cO_1$, and it may be naturally identified to $\IF(i_1 - 1, \ldots , i_r -1;F)$. 

In particular, for $1 \leq k \leq n$ the odd symplectic group $\Sp_{2n+1}(E)$ acts on the odd symplectic Grassmannian $\IG(k,E)$ with two orbits
\begin{eqnarray*}
X_c&=&\{V \in \IG(k,E) : e_1 \in V\} \mbox{~ the closed orbit}\\
X^{\circ}&=&\{V \in \IG(k,E) : e_1 \notin V\} \mbox{~the open orbit}.
\end{eqnarray*}
The closed orbit $X_c$ is isomorphic to $\IG(k-1,F)$. If $k=n+1$ then $\IG(n+1,E)= X_c$ may be identified to the Lagrangian Grassmannian $\IG(n,F)$. 
\end{prop}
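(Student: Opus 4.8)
The plan is to reduce the orbit analysis to the symplectic geometry of the nondegenerate quotient $F = E/\ker\omega$, the only nontrivial input being the transitivity of $\Sp_{2n}(F)$ on isotropic flags of a fixed type in $F$ (equivalently, Witt's extension theorem). I begin by isolating the orbit invariant: since $\omega(g\cdot u, g\cdot v)=\omega(u,v)$ forces $g(\ker\omega)\subseteq\ker\omega$, every $g\in\Sp_{2n+1}(E)$ scales $e_1$ — as is also visible from the block form recalled above — so for an isotropic flag $V_\bullet=(V_{i_1}\subset\cdots\subset V_{i_r})$ the index $j(V_\bullet):=\min\{j:e_1\in V_{i_j}\}$, with the convention $j(V_\bullet)=r+1$ when $e_1\notin V_{i_r}$, is constant on $\Sp_{2n+1}(E)$-orbits. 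Thus $\cO_1,\dots,\cO_{r+1}$ are precisely its level sets, and it remains to decide which are nonempty and to prove that each nonempty one is a single orbit.

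Let $\pi\colon E\to F$ be the projection with kernel $\langle e_1\rangle$. The form induced by $\omega$ on $F$ is nondegenerate, so isotropic subspaces of $F$ have dimension at most $n$; since an isotropic $V\subset E$ with $e_1\notin V$ maps isomorphically onto the isotropic subspace $\pi(V)\subseteq F$, any such $V$ satisfies $\dim V\le n$. Hence $\cO_{r+1}=\emptyset$ exactly when $i_r=n+1$, while for every remaining $\cO_j$ one writes down an explicit flag by inserting $e_1$ in the appropriate spot of a fixed isotropic flag of $F$. This settles the orbit count once transitivity is established.

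For transitivity, fix $j$ and $V_\bullet,V'_\bullet\in\cO_j$. For $\ell\ge j$ one has $e_1\in V_{i_\ell}$, so $V_{i_\ell}=\pi^{-1}(\pi(V_{i_\ell}))$, whereas for $\ell<j$ the map $\pi|_{V_{i_\ell}}$ is injective; in either case $\pi(V_{i_\bullet})$ is an isotropic flag of $F$ of dimension vector $(i_1,\dots,i_{j-1},i_j-1,\dots,i_r-1)$ (all entries $\le n$), and the same holds for $V'_\bullet$. Applying an element of the subgroup $\Sp_{2n}(F)\subset\Sp_{2n+1}(E)$ (the $\lambda=1$, $a=0$ part), which acts transitively on isotropic flags of a fixed type in $F$, I reduce to $\pi(V_{i_\ell})=\pi(V'_{i_\ell})$ for all $\ell$; then automatically $V_{i_\ell}=V'_{i_\ell}$ for $\ell\ge j$. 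The key point is that the lower part of the flag is governed by one datum: $V_{i_{j-1}}$ is the graph of a unique linear functional $\phi$ on $\bar U:=\pi(V_{i_{j-1}})$, and nestedness forces $V_{i_\ell}$ to be the graph of $\phi|_{\pi(V_{i_\ell})}$ for every $\ell<j$; similarly $V'_\bullet$ corresponds to some $\phi'$ on $\bar U$. Extending $\phi'-\phi$ to a linear functional $a$ on $F$ and setting $g=\bigl(\begin{smallmatrix}1 & a\\ 0 & I\end{smallmatrix}\bigr)\in\Sp_{2n+1}(E)$, which fixes $e_1$, acts as the identity modulo $\langle e_1\rangle$ — hence fixes every $\pi^{-1}(U)$ — and carries the graph of $\phi$ to the graph of $\phi'$, yields $g\cdot V_\bullet=V'_\bullet$. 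So each $\cO_j$ is a single orbit, and the number of orbits is $r+1$ if $i_r<n+1$ and $r$ if $i_r=n+1$.

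It remains to identify the closed orbit. The map $V_\bullet\mapsto(\pi(V_{i_1})\subset\cdots\subset\pi(V_{i_r}))$ restricts to an isomorphism $\cO_1\xrightarrow{\sim}\IF(i_1-1,\dots,i_r-1;F)$ with inverse $\bar V_\bullet\mapsto\pi^{-1}(\bar V_\bullet)$; the target is a projective homogeneous $\Sp_{2n}(F)$-variety, so $\cO_1$ is complete, hence closed in $\IF(i_1,\dots,i_r;E)$. For $j\ge2$, starting from a flag in $\cO_j$ with $\phi\ne0$ and applying $\mathrm{diag}(t,1,\dots,1)\in\Sp_{2n+1}(E)$ as $t\to\infty$ degenerates it to a flag lying in $\cO_1$, so $\overline{\cO_j}$ strictly contains $\cO_j$ and $\cO_1$ is the unique closed orbit. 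Specializing $r=1$ recovers the two orbits of $\IG(k,E)$: $X_c=\{e_1\in V\}\cong\IG(k-1,F)$ together with $X^\circ=\{e_1\notin V\}$ when $k\le n$, and $\IG(n+1,E)=X_c\cong\IG(n,F)$. The step I expect to be the real obstacle is the transitivity argument: after matching the $\pi$-images one must see that the entire remaining discrepancy between the two flags is captured by a single linear functional on $\pi(V_{i_{j-1}})$ — so that one unipotent element of $\Sp_{2n+1}(E)$ straightens the whole flag simultaneously — and this hinges on the compatibility of the nested graphs; a secondary subtlety is the boundary case $i_r=n+1$, which is exactly where the dimension bound on isotropic subspaces of $F$ is used.
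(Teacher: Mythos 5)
The paper does not include its own proof of this proposition; it is stated as a result of Mihai and the reader is referred to \cite[Propositions 5 and 6]{mihai:odd}. So the comparison here is between your argument and nothing in the source, and the relevant question is simply whether your argument is correct. It is.

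Your proof is clean and self-contained. The orbit invariant $j(V_\bullet)$ is well-defined because $\Sp_{2n+1}(E)$ preserves $\ker\omega=\langle e_1\rangle$, and you correctly reduce the transitivity question to two ingredients: Witt's theorem for $\Sp_{2n}(F)$ acting on isotropic (weak) flags in $F$, and the observation that once the $\pi$-images agree, the remaining data of a flag in $\cO_j$ is exactly the linear functional $\phi$ on $\bar U=\pi(V_{i_{j-1}})$ whose graphs over $\pi(V_{i_\ell})$ for $\ell<j$ give the lower pieces. The nestedness forces $\phi_\ell=\phi|_{\pi(V_{i_\ell})}$, so a single unipotent $g=\bigl(\begin{smallmatrix}1&a\\0&I\end{smallmatrix}\bigr)$ with $a|_{\bar U}=\phi'-\phi$ straightens the entire lower flag at once while fixing the pieces with $\ell\ge j$ (as it acts trivially modulo $\langle e_1\rangle$). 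The emptiness of $\cO_{r+1}$ when $i_r=n+1$ is precisely the dimension bound for isotropic subspaces in the nondegenerate $F$, and the explicit insertion of $e_1$ into an isotropic flag of $F$ verifies nonemptiness of all remaining orbits. The identification $\cO_1\cong\IF(i_1-1,\dots,i_r-1;F)$ via $\pi$ is a $\Sp_{2n}(F)$-equivariant bijection, giving completeness of $\cO_1$, and your $\C^*$-degeneration $g_t=\mathrm{diag}(t,1,\dots,1)$, applied to a flag in $\cO_j$ ($j\ge2$) whose functional is nonzero already on $\pi(V_{i_1})$, lands the limit in $\cO_1$, showing $\cO_1\subset\overline{\cO_j}$ and hence that $\cO_1$ is the unique closed orbit. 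The two points you flag as potential obstacles — compatibility of the nested graphs, and the boundary case $i_r=n+1$ — are both handled correctly in what you wrote. The only cosmetic caution is that $\pi(V_\bullet)$ may have repeated dimensions (a ``weak'' flag), so one should interpret transitivity of $\Sp_{2n}(F)$ accordingly; this is harmless since a weak flag is determined by its distinct pieces.
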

Mihai identifies the closures $\overline{\cO}_i$ of the orbits and proves they are smooth. From now on we will identify $F \subset E \subset \widetilde{E}$ to $\C^{2n} \subset \C^{2n+1} \subset \C^{2n+2}$ with bases $\langle \e_2, \ldots , \e_{2n+1} \rangle \subset \langle \e_1 , \ldots , \e_{2n+1} \rangle \subset \langle \e_1, \ldots , \e_{2n+2} \rangle$. The corresponding symplectic flag manifolds will be denoted by $\IF(i_1-1, \ldots , i_r-1; 2n) \subset \IF(i_1, \ldots , i_r; 2n+1) \subset \IF(i_1, \ldots , i_r; 2n+2) $. Similarly $\Sp_{2n+1}(E)$ will be denoted by $\Sp_{2n+1}$ etc.

\subsection{The Weyl group and odd-symplectic minimal representatives} We recall next the indexing sets which we will use in the next section to define the Schubert varieties. 

Consider the root system of type $C_{n+1}$ with positive roots $R^+=\{t_i \pm t_j: 1 \leq  i < j \leq n+1\} \cup \{ 2 t_i : 1 \leq  i \leq n+1 \}$ and the subset of simple roots $\Delta=\{\alpha_i:= t_i-t_{i+1}: 1 \leq i \leq n \} \cup \{ \alpha_{n+1}:= 2t_{n+1} \}$. The associated Weyl group $W$ is the hyperoctahedral group consisting of {\em signed permutations}, i.e. permutations $w$ of the elements $\{1, \cdots, n+1,\overline{n+1},\cdots,\overline{1}\}$ satisfying $w(\overline{i})=\overline{w(i)}$ for all $w \in W$. For $1 \le i \le n$ denote by $s_i$ the simple reflection corresponding to the root $t_i - t_{i+1}$ and $s_{n+1}$ the simple reflection of $2 t_{n+1}$. Each subset $ I:=\{ i_1 < \ldots < i_r \} \subset \{ 1, \ldots , n+1 \} $ determines a parabolic subgroup $P:=P_I \le \Sp_{2n+2}(\wtE)$ with Weyl group $W_{P} = \langle s_i: i \neq i_j \rangle$ generated by reflections with indices {\em not} in $I$. Let $\Delta_P:= \{ \alpha_{i_s}: i_s \notin \{ i_1, \ldots , i_r \} \}$ and $R_P^+ := \Span_\Z \Delta_P \cap R^+$; these are the positive roots of $P$. Let $\ell:W \to \mathbb{N}$ be the length function and denote by $W^{P}$ the set of minimal length representatives of the cosets in $W/W_{P}$. The length function descends to $W/W_P$ by $\ell(u W_P) = \ell(u')$ where $u' \in W^P$ is the minimal length representative for the coset $u W_P$. We have a natural ordering \[ 1 < 2 < \ldots < n+1 < \overline{n+1} < \ldots < \overline{1} \/, \] which is consistent with our earlier notation $\overline{i} := 2n+3 - i$. Let $P=P_k$ to be the maximal parabolic obtained by excluding the reflection $s_k$. Then the minimal length representatives $W^{P}$ have the form $(w(1)<w(2)<\cdots<w(k)| w(k+1)< \ldots < w(n+1) \le n+1)$ if $k < n+1$ and $(w(1)<w(2)<\cdots<w(n+1))$ if $k= n+1$. Since the last $n+1-k$ are determined from the first, we will identify an element in $W^{P_k}$ with the sequence $(w(1)<w(2)<\cdots<w(k))$. 

\begin{example} 
\label{Ex}
The reflection $s_{t_1+t_2}$ is given by the signed permutation $s_{t_1+t_2}(1)=\bar{2}, s_{t_1+t_2}(2)=\bar{1}, \mbox{ and } s_{t_1+t_2}(i)=i$ for all $3 \leq i \leq n+1$. The minimal length representative of $s_{t_1+t_2}W_{P_k}$ is $(3<4<\cdots<k<\bar{2}<\bar{1})$.
\end{example}
The Weyl group $W$ admits a partial ordering $\leq$ given by the {\em Bruhat} order. Its covering relations are given by $w < ws_\alpha$ where $\alpha \in R^+$ is a root and $\ell(w) < \ell(w s_\alpha)$. We will use the {\it Hecke product} on the Weyl group $W$. For a simple reflection $s_i$ the product is defined by \[w \cdot s_i = \left\{
     \begin{array}{lr}
       ws_i &  \mbox{if } \ell(ws_i)>\ell(w); \\
       w &  \mbox{otherwise}
     \end{array} \right. \]
The Hecke product gives $W$ a structure of an associative monoid; see e.g.~\cite[\S 3]{buch.m:nbhds} for more details.~Given $u,v \in W$, the product $uv$ is called {\it reduced} if $\ell(uv)=\ell(u)+\ell(v)$, or, equivalently, if $uv = u \cdot v$. For any parabolic group $P$, the Hecke product determines a left action $W \times W/W_{P} \longrightarrow W/W_{P}$ defined by \[u \cdot (wW_{P})=(u \cdot w) W_P. \] We recall the following properties of the Hecke product (cf.~e.g. \cite{buch.m:nbhds}). 

\begin{lemma}\label{lemma:bound} For any $u,v \in W$ there is an inequality $\ell(u \cdot vW_{P}) \leq \ell(u)+\ell(vW_{P})$. If the equality holds then $u \cdot v W_{P}=uvW_{P}$. If furthermore $v \in W^P$ is a minimal length representative, then the following are equivalent:

(i) $\ell(u \cdot vW_{P}) = \ell(u)+\ell(vW_{P})$;

(ii) $\ell( u \cdot v) = \ell(u) + \ell(v)$, and $u \cdot v = uv$ is a minimal length representative in $W^P$. \end{lemma}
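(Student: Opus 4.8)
The plan is to prove Lemma \ref{lemma:bound} by reducing all statements to known facts about the Hecke product on $W$ and the standard combinatorics of minimal length coset representatives. First I would establish the basic inequality $\ell(u \cdot vW_P) \le \ell(u) + \ell(vW_P)$. Write $v' \in W^P$ for the minimal length representative of $vW_P$, so $\ell(vW_P) = \ell(v')$ and $u \cdot vW_P = (u\cdot v')W_P$. Pick a reduced word $u = s_{i_1}\cdots s_{i_p}$ with $p = \ell(u)$. The Hecke product $u \cdot v'$ is obtained by multiplying $v'$ on the left successively by $s_{i_p}, \ldots, s_{i_1}$, where each step either increases the length by one or leaves the element unchanged. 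Projecting to $W/W_P$, each step changes $\ell(\,\cdot\, W_P)$ by at most $1$ (it either stays the same or goes up by $1$, by the standard fact that $\ell(s_i w W_P) \in \{\ell(wW_P)-1, \ell(wW_P), \ell(wW_P)+1\}$ and that the Hecke product never decreases length), so $\ell(u\cdot v'W_P) \le \ell(v') + p = \ell(vW_P) + \ell(u)$. For the clause ``if equality holds then $u\cdot vW_P = uvW_P$'': equality forces every one of the $p$ steps to strictly increase the coset length, hence each step is an honest (non-Hecke) multiplication at the level of $W/W_P$, so $u \cdot v' W_P = (s_{i_1}\cdots s_{i_p} v')W_P = u v' W_P = uvW_P$.

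Next I would handle the equivalence (i) $\Leftrightarrow$ (ii) under the extra hypothesis $v \in W^P$, so $v = v'$. The implication (ii) $\Rightarrow$ (i) is immediate: if $\ell(u\cdot v) = \ell(u)+\ell(v)$ then by the first part of the lemma (applied with trivial parabolic, or directly) $u \cdot v = uv$ and, using that $uv \in W^P$, we get $\ell(u\cdot vW_P) = \ell(uvW_P) = \ell(uv) = \ell(u) + \ell(v) = \ell(u) + \ell(vW_P)$. For (i) $\Rightarrow$ (ii), assume $\ell(u\cdot vW_P) = \ell(u) + \ell(vW_P) = \ell(u) + \ell(v)$. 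I would argue that the chain of reduced-word multiplications $s_{i_p}v, s_{i_{p-1}}s_{i_p}v, \ldots$ must, at each stage, (a) strictly increase length in $W$, and (b) keep the running product in $W^P$. Indeed if at some stage the product left $W^P$ or failed to increase length in $W$, then the subsequent projection to $W/W_P$ would lose at least one unit compared with the ``free'' bound, contradicting that the total coset length equals $\ell(u) + \ell(v)$. Concretely: let $w_j = s_{i_j}\cdots s_{i_p} v$ (Hecke products), and track $\ell(w_j W_P)$. We always have $\ell(w_{j-1}W_P) \le \ell(w_j W_P) + 1$; equality over all $j$ is forced by (i), and one checks that $\ell(w_{j-1}W_P) = \ell(w_j W_P)+1$ together with $w_j \in W^P$ implies $s_{i_{j-1}} w_j > w_j$ in $W$ and $s_{i_{j-1}} w_j \in W^P$ (this is the standard lifting/parabolic lemma: for $w \in W^P$, $s_i w \in W^P$ iff $\ell(s_i w W_P) = \ell(wW_P)+1$, and otherwise $s_i w = w s_\beta$ for some $\beta \in R_P^+$). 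By downward induction on $j$, all $w_j \in W^P$ and $\ell(w_j) = \ell(w_{j+1}) + 1$, whence $w_1 = u\cdot v = uv \in W^P$ and $\ell(uv) = \ell(u) + \ell(v)$, which is (ii).

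The step I expect to be the main (if modest) obstacle is the clean bookkeeping in (i) $\Rightarrow$ (ii): one needs the precise parabolic fact that for $w \in W^P$ and a simple reflection $s_i$, either $s_i w \in W^P$ with $\ell(s_i w) = \ell(w)+1$, or $s_i w = w s_\beta$ with $\beta \in R_P^+$ and $\ell(s_i w) = \ell(w)+1$ but $s_i w \notin W^P$ (so its coset representative has the same length as $w$), or $\ell(s_i w) = \ell(w) - 1$ (so the coset length drops). Isolating which of these three cases occurs and showing that only the first is compatible with the equality in (i), propagated through all $p$ steps, is the crux. Everything else is an invocation of the monoid structure of the Hecke product and the length-additivity criterion for reduced products, both recalled in the paragraph preceding the lemma and available from \cite{buch.m:nbhds}. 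I would present the argument as an induction on $\ell(u)$, which makes the propagation transparent and keeps the writeup short.
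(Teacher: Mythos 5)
Your proof is correct, but it takes a more laborious route than the paper's. For the first part of the lemma (the inequality and the equality clause), the paper simply cites \cite[\S 3]{buch.m:nbhds} rather than reproving it from the reduced-word/trichotomy picture as you do; that's a legitimate choice, just more work than necessary. The more substantive difference is in (i) $\Rightarrow$ (ii): you run a downward induction along a reduced word for $u$, invoking the parabolic trichotomy (for $w\in W^P$ and a simple reflection $s$, either $sw\in W^P$ with $\ell(sw)=\ell(w)+1$, or $sw\notin W^P$ with $swW_P=wW_P$, or $\ell(sw)=\ell(w)-1$) to force each step to stay inside $W^P$ and strictly increase length. This works, and isolating the trichotomy is indeed the crux, but the bookkeeping is heavier than what the problem requires. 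The paper instead dispatches (i) $\Rightarrow$ (ii) with a single chain of inequalities
\[
\ell(u\cdot v)\ \ge\ \ell\bigl((u\cdot v)W_P\bigr)\ =\ \ell(u\cdot vW_P)\ =\ \ell(u)+\ell(vW_P)\ =\ \ell(u)+\ell(v)\ \ge\ \ell(u\cdot v),
\]
where the first inequality is just that the coset length is the minimum over the coset, the middle equalities use hypothesis (i) and $v\in W^P$, and the last inequality is the general Hecke-product bound (i.e.\ the first part of the lemma applied with $P=B$). Forcing all of these to be equalities at once gives $u\cdot v\in W^P$, $\ell(u\cdot v)=\ell(u)+\ell(v)$, and hence $u\cdot v=uv$, in one stroke. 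If you want a shorter writeup, this global argument avoids the parabolic lifting lemma entirely; your step-by-step version has the pedagogical merit of showing exactly where each unit of length is accounted for, but it is not needed here.
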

\begin{proof} The first part of this lemma is explicitly stated in \cite[\S 3]{buch.m:nbhds}. For the equivalence, observe first that (ii) implies (i) since $u \cdot v = uv$. For the converse, since $v \in W^P$, \[ \ell(u \cdot v) \ge \ell(u \cdot v W_P) = \ell(u) + \ell(vW_P) = \ell(u) + \ell(v) \ge \ell(u \cdot v)\/.\] Thus $u \cdot v \in W^P$ and $\ell(u \cdot v) = \ell(u) + \ell(v)$ and this finishes the proof.\end{proof}

\subsection{Schubert Varieties in even and odd flag manifolds} Let $ I:=\{ i_1 < \ldots < i_r \} \subset \{ 1, \ldots , n+1 \} $ and the associated parabolic subgroup $P:=P_I$. The even symplectic flag manifold $\IF(i_1, \ldots , i_r; 2n+2)$ is a homogeneous space $\Sp_{2n+2}/P$. For each $w \in W^{P}$ let $Y(w)^\circ:=B_{2n+2} w B_{2n+2}/P$ be the {\em Schubert cell}. This is isomorphic to the space $\C^{\ell(w)}$. Its closure $Y(w):=\overline{Y(w)^\circ}$ is the {\em Schubert variety}. We might occasionally use the notation $Y(w W_P)$ if we want to emphasize the corresponding coset, or if $w$ is not necessarily a minimal length representative. Recall that the Bruhat ordering can be equivalently described by $v \le w$ if and only if $Y(v) \subset Y(w)$. Set \begin{equation}\label{E:w0} w_0 = \begin{cases} ( \overline{2}, \overline{3}, \ldots , \overline{n+1}, 1) & \textrm{ if } k < n+1 \/; \\ ( 1, \overline{2}, \overline{3}, \ldots , \overline{n+1}) & \textrm{ if } k = n+1 \/; \end{cases} \end{equation} this is an element in $W$.
Let $B:= B_{2n+2} \cap \Sp_{2n+1}$ be the odd-symplectic Borel subgroup. The following results were proved by Mihai \cite[\S 4]{mihai:odd}. 

\begin{prop}\label{prop:schubert} (a) The natural embedding $\iota: \IF(i_1, \cdots, i_r; 2n+1) \hookrightarrow \IF(i_1, \cdots, i_r; 2n+2)$ identifies $\IF(i_1, \cdots, i_r; 2n+1)$ with the (smooth) Schubert subvariety \[ Y(w_0 W_P) \subset \IF(i_1, \cdots, i_r; 2n+2) \/.\] 

(b) The Schubert cells (i.e. the $B_{2n+2}$-orbits) in $Y(w_0)$ coincide with the $B$-orbits in $\IF(i_1, \ldots , i_r; 2n+1)$. In particular, the $B$-orbits in $\IF(i_1, \ldots , i_r; 2n+1)$ are given by the Schubert cells $Y(w)^\circ \subset \IF(i_1, \ldots , i_r; 2n+2)$ such that $w \le w_0$. 
\end{prop}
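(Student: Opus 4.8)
The statement to prove is Proposition~\ref{prop:schubert}, parts (a) and (b), giving Mihai's identification of the odd-symplectic flag variety with a smooth Schubert variety in the even symplectic flag variety, together with the matching of $B_{2n+2}$-orbits and $B$-orbits. Since this is attributed to Mihai and the essential content is the orbit-theoretic combinatorics, the plan is to reduce everything to a concrete analysis of the parabolic $P' \subset \Sp_{2n+2}$ stabilizing $\ker\omega = \langle \e_1 \rangle$, using the surjection $P' \twoheadrightarrow \Sp_{2n+1}$ from Mihai \cite[Prop. 3.3]{mihai:odd} recalled in the preliminaries.

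First I would show (a). The point is that $\IF(i_1,\ldots,i_r;2n+1)$ consists exactly of those isotropic flags in $\wtE$ which are contained in the hyperplane $E = \langle \e_1, \ldots, \e_{2n+1}\rangle$, and isotropy with respect to $\wto$ restricted to such flags is the same as isotropy with respect to $\omega$ (since $\omega = \wto|_E$). So as a set, $\IF(i_1,\ldots,i_r;2n+1)$ is the locus of flags $F_\bullet$ in $\IF(i_1,\ldots,i_r;\wtE)$ with $F_{i_r} \subseteq E$. I would then identify this locus as a Schubert variety: the condition $F_{i_r}\subseteq E = \e_1^{\perp_{\text{std}}}$ (the annihilator of $\e_{\bar 1}$ in the flag sense, i.e. $\e_{\bar 1}\notin F_{i_r}$ after passing to the appropriate coordinate description) is a Schubert condition. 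Concretely, the basepoint flag $\langle \e_{\bar 2}\rangle \subset \langle \e_{\bar 2},\e_{\bar 3}\rangle \subset \cdots$ — corresponding to $w_0$ as defined in \eqref{E:w0} — lies in $E$, and $Y(w_0 W_P)$ is precisely the closure of its $B_{2n+2}$-orbit. One checks $Y(w_0 W_P) = \{F_\bullet : \e_{\bar 1} \notin F_{i_r}\} = \IF(i_1,\ldots,i_r;2n+1)$ by comparing dimensions ($\ell(w_0)$ equals the dimension of the odd-symplectic flag variety, which can be read off from Mihai's codimension formula $\tfrac{i_r(i_r-1)}{2}$ inside $\Fl(i_1,\ldots,i_r;E)$) and noting both are irreducible closed subvarieties with one containing the other. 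Smoothness of $Y(w_0 W_P)$ then follows because $\IF(i_1,\ldots,i_r;2n+1)$ is smooth — it is a tower of isotropic Grassmannian bundles, or one cites Mihai \cite[Prop. 4.1]{mihai:odd} directly.

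For (b), I would argue that the $B_{2n+2}$-action preserves $Y(w_0 W_P)$ (it is a Schubert variety, hence $B_{2n+2}$-stable), so $B_{2n+2}$ acts on $\IF(i_1,\ldots,i_r;2n+1)$. Since $B = B_{2n+2}\cap\Sp_{2n+1}$ and, crucially, $B_{2n+2}\subseteq P'$ (the upper-triangular Borel preserves $\langle\e_1\rangle$), the surjection $P'\twoheadrightarrow\Sp_{2n+1}$ sends $B_{2n+2}$ onto $B$; therefore the $B_{2n+2}$-action on $\IF(i_1,\ldots,i_r;2n+1)$ factors through $B$ and the two orbit decompositions coincide. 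The $B_{2n+2}$-orbits on $Y(w_0 W_P)$ are, by the standard theory of Schubert varieties, exactly the cells $Y(w)^\circ$ with $w\le w_0$ in $W^P$, which is the final assertion.

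The main obstacle is the dimension/identification bookkeeping in part (a): one must verify that the Schubert variety cut out by the single condition ``$F_{i_r}$ avoids the line $\langle\e_{\bar 1}\rangle$'' is indexed by precisely the $w_0$ of \eqref{E:w0} and has the right dimension, rather than being a smaller or larger Schubert variety. This is where the careful choice of the ordered basis $\{\e_1,\ldots,\e_{n+1},\e_{\overline{n+1}},\ldots,\e_{\bar 1}\}$ and the explicit form of $w_0$ pay off — $w_0$ is the maximal signed permutation in $W^P$ all of whose one-line values lie in $\{1,\overline{2},\ldots,\overline{n+1}\}$, i.e. which never uses $\bar 1$, and one checks this set of values is Bruhat-maximal subject to that constraint. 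Everything else is either formal (orbit factoring through a surjection) or a direct appeal to Mihai's cited results, so I would keep those steps brief.
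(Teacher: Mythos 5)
The paper does not actually prove Proposition~\ref{prop:schubert}: it cites Mihai's work \cite[\S 4]{mihai:odd} and records the statement as known, so there is no internal proof to compare your proposal against. Taking your proposal on its own terms, the overall strategy is sound — realize $\IF(i_1,\ldots,i_r;2n+1)$ as the locus of isotropic flags in $\wtE$ whose largest member is contained in the hyperplane $E$, observe this locus is closed and $B_{2n+2}$-stable hence a union of Schubert varieties, use irreducibility and a dimension count to pin it down as $Y(w_0W_P)$, and for (b) pass through the surjection $P'\twoheadrightarrow\Sp_{2n+1}$ which restricts $B_{2n+2}$ onto $B$. That skeleton works.

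However there is a concrete error in the identification of the Schubert condition, and it is not merely a typo. You write $E=\e_1^{\perp_{\text{std}}}$; in the standard coordinates $E=\langle\e_1,\ldots,\e_{\bar 2}\rangle$ is the annihilator of $\e_{\bar 1}$, so $E=\e_{\bar 1}^{\perp_{\text{std}}}$ (alternatively $E=\e_1^{\perp_{\wto}}$, the symplectic perp of $\e_1$). More seriously, you twice assert that the Schubert variety is cut out by the condition ``$\e_{\bar 1}\notin F_{i_r}$'' and that
\[
Y(w_0W_P)=\{F_\bullet : \e_{\bar 1}\notin F_{i_r}\}=\IF(i_1,\ldots,i_r;2n+1).
\]
Both equalities fail. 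The set $\{F_\bullet:\e_{\bar 1}\notin F_{i_r}\}$ is an \emph{open} condition, so it cannot equal the closed Schubert variety $Y(w_0W_P)$, and it strictly contains $\IF(i_1,\ldots,i_r;2n+1)$: for instance the isotropic line $F_1=\langle\e_1+\e_{\bar 1}\rangle$ avoids $\e_{\bar 1}$ but is not contained in $E$. The correct closed condition, which you do state correctly in your first paragraph, is $F_{i_r}\subseteq E$, equivalently $\dim(F_{i_r}\cap E)=i_r$. Containment in a $B_{2n+2}$-stable hyperplane is a rank (Schubert) condition; avoidance of a line is not. If you replace ``$\e_{\bar 1}\notin F_{i_r}$'' everywhere by ``$F_{i_r}\subseteq E$'', the remainder of your argument (closedness, $B_{2n+2}$-stability, irreducibility, the dimension count against $\ell(w_0W_P)$ via Mihai's codimension formula, and the orbit-matching in (b)) goes through.
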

To emphasize that we discuss Schubert cells or varieties in the odd-symplectic case, for each $w \le w_0$ such that $w \in W^P$, we denote by $X(w)^\circ$, and $X(w)$, the Schubert cell, respectively the Schubert variety in $\IF(i_1, \ldots , i_r; 2n+1)$. The same Schubert variety $X(w)$, but regarded in the even flag manifold is denoted by $Y(w)$. For further use we note that $\IG(k, 2n+1)$ has complex codimension $k$ in $\IG(k, 2n+2)$. Further, a Schubert variety $X(w)$ in $\IG(k, 2n+1)$ is included in the closed orbit $X_c$ of if and only if it has a minimal length representative $w \le w_0$ such that $w(1)=1$. 

Define the set $W^{odd}:= \{ w \in W: w \le w_0 \}$ and call its elements {\em odd-symplectic permutations}. This set consists of permutations $w \in W$ such that $w(j) \neq \bar{1}$ for any $1 \le j \le n+1$. The following closure property of the Hecke product on odd-symplectic permutations will be important later on.
\begin{lemma}\label{lemma:hclosure} Let $u,v \in W$ be two odd-symplectic permutations, and assume that $u(1) =1$. Then $uv$ and $u \cdot v$ are odd-symplectic permutations.\end{lemma}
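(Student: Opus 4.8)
The plan is to reduce everything to a statement about the images of the two permutations acting on elements, together with the combinatorial characterization of $W^{odd}$ recalled just before the lemma: a signed permutation $w$ is odd-symplectic if and only if $w(j) \neq \bar 1$ for all $1 \le j \le n+1$, equivalently (by the symmetry $w(\bar i) = \overline{w(i)}$) if and only if $1$ is not in the image of $\{\bar 1, \ldots, \overline{n+1}\}$ under $w$, i.e.\ $w^{-1}(1) \le n+1$. Since $w \le w_0$ is the same as $w \in W^{odd}$, and the Hecke product $u \cdot v$ is $\ge uv$ in Bruhat order while being the Bruhat-join generated by the factors, the key reduction is: it suffices to prove that $u \cdot v \le w_0$, and then $uv \le u\cdot v \le w_0$ follows automatically. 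So the whole lemma comes down to one inequality in Bruhat order.

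First I would record the two elementary facts I need. (1) For odd-symplectic $v$, since $v(\bar 1) = \overline{v(1)} \ne 1$ and similarly no $\bar j$ maps to $1$, we have $v^{-1}(1) \le n+1$; and $u(1) = 1$ means $u^{-1}(1) = 1$. (2) The Hecke product $u \cdot v$ can be computed by a sequence of Hecke multiplications by simple reflections on the right, starting from $u$ and following a reduced word for $v$; at each step we either multiply by $s_i$ (increasing length) or do nothing. The plan is to track what happens to the position of the entry $\bar 1$ throughout this process — or dually, to track $w^{-1}(1)$ — and show it never reaches a forbidden slot. Concretely: I claim that if $w$ is a signed permutation with $w^{-1}(1) \le n+1$, then for every simple reflection $s_i$, the Hecke product $w \cdot s_i$ still satisfies $(w\cdot s_i)^{-1}(1) \le n+1$. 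Right multiplication by $s_i$ (for $i \le n$) swaps the values in positions $i$ and $i+1$ and also, by the signed condition, positions $\bar{i+1}$ and $\bar i$; right multiplication by $s_{n+1}$ swaps positions $n+1$ and $\overline{n+1}$. The only way to move the entry $1$ from a position $\le n+1$ to a position $\ge \overline{n+1}$ is via $s_{n+1}$ acting when $1$ sits in position $n+1$. But in that case $w(n+1) = 1$, hence $w(\overline{n+1}) = \bar 1$, and one checks $\ell(w s_{n+1}) < \ell(w)$: the pair of positions $(n+1, \overline{n+1})$ carrying values $(1, \bar 1)$ is already an inversion for the root $2t_{n+1}$ because $1 < \bar 1$ is violated in the reading order... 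I would verify the length comparison carefully here using the standard inversion description for type $C$ (a reflection $s_{n+1}$ is a descent of $w$ on the right iff $w(n+1) < 0$ in the signed-value sense), concluding $w(n+1) = 1 > 0$ so actually $\ell(ws_{n+1}) > \ell(w)$ — wait, this is exactly the case I must rule out, so I need the hypothesis $u(1)=1$ to enter and forbid ever reaching the configuration $w(n+1)=1$ in the first place.

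So the real argument is an invariant maintained along the Hecke word: starting from $u$ with $u(1)=1$, I claim that at every intermediate Hecke partial product $w$, we have $w^{-1}(1) \le k$ where $k$ is such that $v \in W^{P_k}$ — or more robustly, that $1$ never occupies position $n+1$ when we are about to apply $s_{n+1}$. The cleanest way: observe $u(1)=1$ with $u$ odd-symplectic forces $u \in W^{odd}$ to have $u$ fixing the "first slot block" appropriately, and the factor $v$, being odd-symplectic, when Hecke-multiplied on the right can only permute among positions $\{1,\ldots,n+1\}$ the entry $1$ stays in, never pushing it past $n+1$, precisely because reaching the boundary and crossing would require an ascending application of $s_{n+1}$ to a $w$ with $w(n+1)=1$, and I will show inductively that $w(n+1) = 1$ can only happen after an $s_n$ move that itself required $w$ to already have had $1$ in position $n$ with the $s_n$-ascent condition, pushing the contradiction back toward $u$; since $u(1)=1$ sits at the far left and each rightward simple-reflection Hecke step moves the value $1$ by at most one position to the right \emph{only when it strictly increases length}, a counting/monovariant argument (the position of $1$ can increase by at most $\ell(v)$ total, but more sharply, it can only move right through positions that a reduced word for $v \in W^{P_k}$ "touches", and $v \in W^{odd}$ is designed so this is bounded by $n+1$) closes it. The main obstacle, and where I expect to spend the most care, is making this monovariant on the position of the entry $1$ precise and correct: I must pin down exactly when a Hecke multiplication by $s_{n+1}$ is length-increasing versus a no-op, check it genuinely cannot be applied when $1$ is in position $n+1$ given the constraints inherited from $u(1)=1$ and $v \le w_0$, and handle the interaction with the parabolic quotient $W^{P_k}$ cleanly (alternatively, bypass the quotient entirely by working with genuine permutations and invoking Lemma~\ref{lemma:bound}). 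Once that invariant is established, $(u\cdot v)^{-1}(1) \le n+1$ gives $u \cdot v \in W^{odd}$, hence $uv \le u\cdot v$ is also in $W^{odd}$, completing the proof.
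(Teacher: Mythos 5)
Your reduction to showing $u\cdot v \in W^{odd}$ is correct: $W^{odd}$ is by definition the lower Bruhat ideal below $w_0$, and since $u\cdot v$ is the Bruhat-maximum of $\{u'v' : u' \le u,\ v' \le v\}$ one indeed has $uv \le u\cdot v$. But the second half of your argument---the ``monovariant'' tracking the position of the entry $1$ through a Hecke word for $v$---has a genuine hole, precisely where you flag one. You correctly isolate the unique dangerous step (a length-increasing $s_{n+1}$ applied while $w(n+1)=1$), but what you offer to exclude it is a restatement of the conclusion rather than a proof: the appeal to ``positions that a reduced word for $v$ touches'' is undefined, the side remark about $v \in W^{P_k}$ is not relevant (the lemma is about arbitrary odd-symplectic $v \in W$, not minimal coset representatives), and nothing you write identifies a mechanism by which $u(1)=1$ together with oddness of $v$ prevents the Hecke word from marching $1$ rightward to position $n+1$ and then applying $s_{n+1}$.

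The missing ingredient---which you gesture at in parentheses but do not deploy---is structural rather than positional. By \cite[Prop.~3.1]{buch.m:nbhds}, one may write $u \cdot v = u'v$ with $u' \le u$ (this is stronger than Lemma~\ref{lemma:bound}). The one nontrivial point is then that $u' \le u$ and $u(1)=1$ force $u'(1)=1$. This holds because $\{w \in W : w(1)=1\}$ is exactly the standard parabolic subgroup $W_{\{s_2,\dots,s_{n+1}\}}$, and a standard parabolic subgroup is closed under passing to Bruhat-smaller elements (the paper instead argues this geometrically via containment in the closed orbit of $\IF^{odd}$). Given $u'(1)=1$, one has $(u\cdot v)^{-1}(1) = (u'v)^{-1}(1) = v^{-1}(u'^{-1}(1)) = v^{-1}(1) \le n+1$, so $u\cdot v \in W^{odd}$; the statement for $uv$ is the same one-line computation with $u$ in place of $u'$. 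If you insisted on running the positional monovariant, you would need this very fact at each intermediate step ($u\cdot v_j = u'_j v_j$ with $u'_j \le u$, hence $u'_j(1)=1$, and $v_j \le v$ is odd-symplectic), at which point the Hecke-word bookkeeping is doing no work. So your strategy is in the right direction, but its central step is unproved; with the cited decomposition in hand, the direct argument is shorter.
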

 
\begin{proof} We need to show that $(uv)(j) \neq \bar{1}$ and $(u \cdot v)(j) \neq \bar{1}$ for any $1 \le j \le n+1$. In the first situation, since $u(1) =1$, if $(uv)(j) =\bar{1}$ for some $1 \le j \le n+1$, then $v(j)= \bar{1}$, which contradicts that $v$ is odd-symplectic. For the second, consider the signed permutation $u':= (u \cdot v)v^{-1} \in W$. By \cite[Prop. 3.1]{buch.m:nbhds} we have that $u' \le u$ and $u' v = u \cdot v$. The condition that $u' \le u$ implies that there is an inclusion of Schubert varieties $X(u') \subset X(u)$ in the full odd-symplectic flag manifold $\IF^{odd}:= \IF(1,2,\ldots , n+1; 2n+1)$. Further, the hypothesis that $u(1) = 1$ implies that $X(u)$ is in the closed orbit of $\IF^{odd}$, thus $X(u')$ is in the closed orbit as well. This implies that $u'(1) = 1$. Then $u \cdot v = u' v$ and since $u'(1) =1$ the element $u'v$ is again odd-symplectic, as claimed.\end{proof}

\section{Equivariant cohomology} Fix a parabolic subgroup $P \subset \Sp_{2n+2}$ containing the standard Borel subgroup $B_{2n+2}$. Let $\IF^{ev}:= \IF(i_1, \ldots , i_r; 2n+2)$ be the corresponding symplectic flag variety. The Schubert cells $Y(w)^\circ$ form a stratification
of $\IF^{ev}$, when $w$ varies in $W^P$. This implies that the Schubert classes $[Y(w)] \in H_{2 \ell(w)}(\IF)$ form a basis of the (integral) homology of $\IF^{ev}$. Since $\IF^{ev}$ is smooth, the Schubert classes determine cohomology classes $[Y(w)] \in H^{2 \dim \IF - 2 \ell(w)}(\IF^{ev})$. The odd-symplectic flag manifold $\IF:= \IF(i_1, \ldots , i_r; 2n+1)$ is a smooth Schubert variety in $\IF^{ev}$, therefore its Schubert classes $[X(w)] = [Y(w)] \in H_{2 \ell(w)} (\IF)$ for $w \in W^P \cap W^{odd}$ form a $\Z$-basis for both homology and cohomology $H_*(\IF)= H^*(\IF)$. We will use that in the Grassmannian case, the inclusion $\iota: \IG(k, 2n+1) \to \IG(k, 2n+2)$ gives a group isomorphism $H_2(\IG(k, 2n+1)) \simeq H_2(\IG(k, 2n+2))$ sending the Schubert curve $[X(s_k)]$ to itself. For $\mathcal{X} \in \{ \IF, \IF^{ev} \}$ there is a nondegenerate Poincar{\'e} pairing $\langle \cdot , \cdot \rangle : H^*(\mathcal{X}) \otimes H^*(\mathcal{X}) \to H^*(pt)$ given by \[ \langle \gamma_1, \gamma_2 \rangle = \int_{\mathcal{X}} \gamma_1 \cup \gamma_2 \] where the integral is the push-forward to the point, i.e. $\int_{\mathcal{X}} \gamma:= p_*( \gamma)$ and $p:\mathcal{X} \to pt$ is the structure morphism. For a cohomology class $\gamma \in H^*(\mathcal{X})$ we denote by $\gamma^\vee$ its Poincar{\'e} dual. Thus $\int_{\IF} [X(u)] \cup [X(v)]^\vee = \delta_{u,v}$.   

\begin{remark} It is well known that the Poincar{\'e} dual of a Schubert class in $H^*(\IF^{ev})$ is again a Schubert class; indeed this is true for any homogeneous space $G/P$ \cite{brion:lectures}. This is no longer true in the odd-symplectic case. Formulas for Poincar{\'e} dual classes of Schubert classes in the odd-symplectic Grassmannian $\IG(k,2n+1)$ were calculated by Pech in \cite[Prop. 3]{pech:quantum} for $k=2$ and in \cite[Prop. 2.11, p.50]{pech:thesis} for arbitrary $k$.\end{remark}

We review some basic facts about the equivariant cohomology ring, following \cite{anderson:notes}, and focusing on $H^*_T(\IF)$. For any topological space $Z$ with a left torus $T$ action, its {\em equivariant cohomology} ring is the ordinary cohomology of the Borel mixed space $Z_T:= (ET \times Z)/T$ where $ET \to BT$ is the universal $T$-bundle, and $T$ acts on $ET \times Z$ by $t \cdot (e, z) = (e t, t^{-1}z)$. In particular, $H^*_T(pt) = H^*(BT)$ is a polynomial ring $\Z[t_1, \ldots , t_s]$ where $t_i$ are an additive basis for $(Lie~ T)^*$. The continuous map $Z_T \to BT$ gives a $H^*_T(pt)$-algebra structure on $H^*_T(Z)$. Let now $Z= \IF$ with its natural $T \simeq (\mathbb{C}^*)^{n+1}$ action. The Schubert varieties $X(u) \subset \IF$ are $T$-stable, and the fundamental classes $[X(u)]_T \in H_{2 \ell(u)}^T(\IF)$ give an $H^*_T(pt)$-basis for the equivariant (co)homology $H^*_T(\IF) = H_*^T(\IF)$. The inclusion $\iota: \IF \to \IF^{ev}$ gives a natural restriction map $H^*_{T_{2n+2}}(\IF^{ev}) \to H^*_T(\IF)$. The action of $T_{2n+2}$ on $\IF$ factors through that of $T$, therefore the natural morphism $H^*_{T_{2n+2}}(\IF) \to H^*_T(\IF)$ is an algebra isomorphism over $H^*_T(pt)= H^*_{T_{2n+2}}(pt)= \Z[t_1, \ldots t_{n+1}]$. Because of this, we will take $T:=T_{2n+2}$ from now on.
We use the same conventions as in \cite[\S 8]{buch.m:nbhds} for the geometric interpretation of the characters $t_i$ inside the equivariant cohomology ring. There is an equivariant version of the Poincar{\'e} pairing $\langle \cdot , \cdot \rangle :H^*_T(\IF) \otimes H^*_T(\IF) \to H^*_T(pt)$ given by the (equivariant) push forward map to the point: \[ \langle \gamma_1 , \gamma_2 \rangle = \int^T_{\IF^{odd}} \gamma_1 \cup \gamma_2: = p_*^T(\gamma_1 \cup \gamma_2) \in H^*_T(pt) \/. \]

\section{(Equivariant) Quantum cohomology} In this section we recall some basic facts about equivariant Gromov-Witten (GW) invariants and the equivariant quantum (EQ) cohomology rings, following \cite{FP,mihalcea:eqqhom}. For the purposes of this paper we specialize to the odd and even-symplectic Grassmannian case.

\subsection{Equivariant Gromov-Witten invariants} Set $\IG:= \IG(k, 2n+1)$ and $\IG^{ev}:= \IG(k, 2n+2)$, with $\iota: \IG \to \IG^{ev}$ the natural embedding. Let $\mathcal{X} \in \{ \IG, \IG^{ev} \}$. Recall that $H_2(\mathcal{X}) = \mathbb{Z}$. A {\em degree} $d$ in $\mathcal{X}$ is an effective homology class $d \in H_2(\mathcal{X})$, and it can be identified with a non-negative integer. Let $\Mb_{0,r}(\mathcal{X}, d)$ be the Kontsevich moduli of stable maps to $\mathcal{X}$ of degree $d$ to $\mathcal{X}$ with $r$ marked points ($r \geq 0$); see e.g. \cite{FP}. This is a projective algebraic variety of expected dimension \[ \textrm{expdim}~ \Mb_{0,r}(\mathcal{X},d) = \dim \mathcal{X} + \int_{[X(s_k)]} c_1(T_{\mathcal{X}}) + r-3 \] where $T_{\mathcal{X}}$ denotes the tangent bundle of $\mathcal{X}$. 

\begin{lemma}\label{lemma:degq} Let $X(Div)$ and $Y(Div)$ be the (unique) Schubert divisors in $\IG$ and $\IG^{ev}$. Then the following equalities hold:  

(a) $\iota^*[Y(Div)] = [X(Div)]$;

(b) $c_1(T_{\IG}) = (2n+2 - k) [X(Div)]$ and $c_1(T_{\IG^{ev}}) = (2n+3 - k) [Y(Div)]$.

(c) \[ \int_{[X(s_k)]} c_1(T_{\mathcal{X}}) = \begin{cases} 2n+2- k & \textrm{ if } \mathcal{X} = \IG;\\ 2n+3- k & \textrm{ if } \mathcal{X} = \IG^{ev} \/. \end{cases} \]

\end{lemma}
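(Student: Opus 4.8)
The plan is to treat each of the three parts as a computation about Schubert classes and the tangent bundle, using the fact that $\IG$ and $\IG^{ev}$ are Schubert varieties, respectively, homogeneous spaces for the (even) symplectic group, and that the inclusion $\iota$ identifies the degree $1$ curve classes compatibly. For part (a), I would note that the Schubert divisor $Y(Div)$ in $\IG^{ev}=\IG(k,2n+2)$ is $Y(w_{Div})$ for the unique $w_{Div}\in W^{P_k}$ of length $\dim\IG^{ev}-1$; by Proposition~\ref{prop:schubert}(b) this $w_{Div}$ satisfies $w_{Div}\le w_0$, so its Schubert cell already lies in $\IG$, and the intersection $Y(Div)\cap\IG$ is the Schubert divisor $X(Div)$ inside $\IG$. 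Pulling back under $\iota$ and using that $\iota$ is a closed embedding of a smooth subvariety realized as a Schubert variety, the class $\iota^*[Y(Div)]$ is represented by this codimension-one Schubert subvariety of $\IG$, hence equals $[X(Div)]$. (One may alternatively argue via localization at $T$-fixed points, comparing restrictions $[Y(Div)]|_e$ and $[X(Div)]|_e$ for $e\in \IG^T$.)

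For part (b), the statement $c_1(T_{\IG^{ev}})=(2n+3-k)[Y(Div)]$ is the standard index computation for the cominuscule-type data of $\IG(k,2n+2)=\Sp_{2n+2}/P_k$: the anticanonical class of $G/P$ is the sum of the positive roots not in $R_P^+$, expressed in the basis of fundamental-weight Schubert divisors, and for the maximal parabolic $P_k$ of type $C_{n+1}$ this coefficient is the well-known Fano index $2n+3-k$ (equivalently, $\int_{\ell}c_1 = 2n+3-k$ for a line $\ell$, which is how it is usually recorded, e.g.\ in \cite{BKT}). For $\IG$ itself I would use the adjunction/normal bundle sequence for the smooth subvariety $\iota:\IG\hookrightarrow\IG^{ev}$ of codimension $k$: namely $c_1(T_{\IG}) = \iota^*c_1(T_{\IG^{ev}}) - c_1(N_{\IG/\IG^{ev}})$. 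Combining with part (a), $\iota^*c_1(T_{\IG^{ev}}) = (2n+3-k)[X(Div)]$, so it remains to show $c_1(N_{\IG/\IG^{ev}}) = [X(Div)]$; since $\IG$ is the zero locus of a section of a rank-$k$ bundle on $\IG(k,2n+2)$ cut out by the extra symplectic conditions — concretely, a subbundle of $\mathrm{Hom}(S, \wtE/S)$ or $S^\vee$ of the appropriate rank — one identifies the normal bundle's first Chern class with that bundle's, which is $[Y(Div)]$ restricted, i.e.\ $[X(Div)]$. Alternatively, and perhaps more cleanly, I would compute $c_1(T_{\IG})$ directly: $\IG$ is a smooth projective variety whose anticanonical bundle can be read off from Mihai's description, or one notes that $\mathrm{Pic}(\IG)=\Z[X(Div)]$ (since $H^2(\IG)=\Z$) and pins down the coefficient by a single line computation.

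Part (c) is then immediate: by definition the Schubert curve class on either space is $[X(s_k)]$ (the degree-$1$ generator of $H_2$), and pairing with part (b) gives $\int_{[X(s_k)]}c_1(T_{\IG}) = (2n+2-k)\int_{[X(s_k)]}[X(Div)] = 2n+2-k$ because $[X(Div)]$ is Poincar\'e dual to the point/line incidence with $\int_{[X(s_k)]}[X(Div)]=1$ (the Schubert divisor and the Schubert curve are a dual pair), and similarly $2n+3-k$ on $\IG^{ev}$; here one uses the remark recorded in the excerpt that $\iota_*$ is an isomorphism $H_2(\IG)\xrightarrow{\sim} H_2(\IG^{ev})$ sending $[X(s_k)]$ to $[X(s_k)]$, so the two line classes are genuinely matched. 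The main obstacle is part (b) for $\IG$: establishing that the normal bundle contributes exactly $[X(Div)]$ (equivalently, that the Fano index drops by exactly $1$ from $\IG^{ev}$ to $\IG$). I would resolve this either through the explicit vector-bundle description of $\IG$ inside $\IG(k,2n+2)$ coming from the degenerate form $\omega$, or — the safest route for a self-contained argument — by a direct localization computation of $c_1(T_{\IG})|_e$ at a convenient $T$-fixed point $e\in\IG$ (say the one corresponding to $w_0$ or to a standard coordinate subspace), expressing it as a sum of tangent-space characters and matching against $(2n+2-k)[X(Div)]|_e$.
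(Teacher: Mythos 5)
Your overall plan is reasonable and more self-contained than the paper's proof, which simply cites Pech (Prop.\ 2.9 of her thesis for (a), Prop.\ 13 of \cite{pech:quantum} for (b)) and then deduces (c) from the identity $\int_{\IG^{ev}}[Y(Div)]\cap[Y(s_k)]=1$. However, your argument for part (a) contains a genuine error. You claim that $w_{Div}\le w_0$, citing Proposition \ref{prop:schubert}(b), and conclude that $Y(Div)\cap\IG=X(Div)$. This is backwards: with the paper's convention $\dim Y(w)=\ell(w)$, the divisor $Y(Div)$ has $\ell(w_{Div})=\dim\IG^{ev}-1$, whereas $\ell(w_0W_P)=\dim\IG=\dim\IG^{ev}-k$, so for $k\ge 1$ one has $w_0\le w_{Div}$ and hence $\IG\subset Y(Div)$ (in partition language, $Y(Div)$ corresponds to $(1,0,\dots,0)$, $\IG$ to $(1,1,\dots,1)$, and the latter has larger codimension). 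Proposition \ref{prop:schubert}(b) says nothing about $w_{Div}\le w_0$; it only identifies the Schubert cells contained in $\IG$. Since the scheme-theoretic intersection $Y(Div)\cap\IG=\IG$, that route does not give the pullback. The fix is exactly your parenthetical alternative, made precise by a degree count: $\iota^*[Y(Div)]$ lives in $H^2(\IG)\cong\Z[X(Div)]$, and $\int_{[X(s_k)]}\iota^*[Y(Div)]=\int_{\iota_*[X(s_k)]}[Y(Div)]=\int_{[Y(s_k)]}[Y(Div)]=1$ (using that $\iota_*$ identifies $H_2(\IG)$ with $H_2(\IG^{ev})$, sending $[X(s_k)]\mapsto[Y(s_k)]$), so $\iota^*[Y(Div)]=[X(Div)]$.

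Your argument for (b) via the normal bundle is correct and is a nice alternative to the citation: the inclusion $\IG\subset\IG^{ev}$ is cut out by the transverse vanishing of the natural section of $S^\vee$ (rank $k$) induced by $\widetilde E\twoheadrightarrow\widetilde E/E$, so $N_{\IG/\IG^{ev}}\cong S^\vee|_{\IG}$, and $c_1(S^\vee)=[Y(Div)]$ on $\IG^{ev}$; combined with (a) and the index computation $c_1(T_{\IG^{ev}})=(2n+3-k)[Y(Div)]$, this gives $c_1(T_{\IG})=(2n+2-k)[X(Div)]$. Part (c) then follows exactly as you say, and agrees with the paper's stated reason.
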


\begin{proof} A more general version of the first identity was proved by Pech in her thesis \cite[Prop. 2.9]{pech:thesis}. The explicit calculation of the class of the tangent bundle in the even case can be found e.g. in \cite{BKT2}. In the odd case, the calculation is implicit Pech's work (cf. \cite[Prop. 13]{pech:quantum}, see also \cite[Prop. 2.15]{pech:thesis}). Part (c) is a standard calculation based on the fact that $\int_{\IG^{ev}} [Y(Div)] \cap [Y(s_k)] =1$.\end{proof}

The points of the moduli space are (equivalence classes of) stable maps $f: (C, pt_1, \ldots  , pt_r) \to \mathcal{X}$ of degree $d$, where $C$ is a tree of $\mathbb{P}^1$'s and $pt_i \in C$ are non-singular points. The 
moduli space $\Mb_{0,r}(\mathcal{X},d)$ comes equipped with $r$ evaluation maps $\ev_i: \Mb_{0,r}(\mathcal{X},d) \to \mathcal{X}$ sending $(C, pt_1, \ldots  pt_r;f)$ to $f(pt_i)$. For $\gamma_1, \ldots ,  \gamma_r \in H^*_T(\mathcal{X})$, the $r$-point, genus $0$, (equivariant) GW invariant is defined by 
\[ \langle \gamma_1, \ldots , \gamma_r \rangle_d := \int_{[\Mb_{0,r}(\mathcal{X},d)]^{vir}}^T \ev_1^*(\gamma_1) \cup \ev_2^*(\gamma_2) \cup \ldots \cup \ev_r^*(\gamma_r) \in H^*_T(pt) \/, \] where $[\Mb_{0,r}(\mathcal{X},d)]_T^{vir} \in H^T_{2 \textrm{ expdim } \Mb_{0,r}(\mathcal{X},d)}(\Mb_{0,r}(\mathcal{X},d))$ is the {\em virtual fundamental class}. If $\mathcal{X} = \IG^{ev}$ (or, more generally, $\mathcal{X} = G/P$) then the moduli space $\Mb_{0,r}(\mathcal{X},d)$ is an irreducible algebraic variety \cite{kim.pandharipande, thomsen}, and the virtual fundamental class coincides to the fundamental class. In the (non-homogeneous) case $\mathcal{X} = \IG$, and when $d=1$, Pech used obstruction theory to prove the following (cf.~\cite[Proposition 2.15]{pech:thesis}; for $k=2$ \cite[Proposition 13]{pech:quantum}):

\begin{prop}\label{prop:vfund} Let $r=1,2, 3$. Then the moduli space of stable maps $\overline{\mathcal{M}}_{0,r}(\IG(k,2n+1),1)$ is a smooth, irreducible, algebraic variety of complex dimension $k(2n+1-k)-\frac{k(k-1)}{2}+(2n+2-k)+r-3$. \end{prop}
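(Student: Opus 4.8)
The plan is to prove Proposition~\ref{prop:vfund} by first reducing the statement about $\overline{\mathcal{M}}_{0,r}(\IG,1)$ to a statement about the moduli of \emph{unmarked} degree-one stable maps, and then to identify $\overline{\mathcal{M}}_{0,0}(\IG,1)$ with a geometrically tractable parameter space, namely the Fano variety of lines on $\IG$. Since a degree-one stable map with an irreducible domain is an embedding whose image is a line $\ell \subset \IG$ (a $\mathbb{P}^1$ of minimal degree), and since no degree-one stable map can have a reducible domain (there is no effective decomposition $1 = d_1 + d_2$ with both $d_i > 0$), every stable map in $\overline{\mathcal{M}}_{0,0}(\IG,1)$ has smooth domain $\mathbb{P}^1$. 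Hence $\overline{\mathcal{M}}_{0,0}(\IG,1)$ is exactly the Fano scheme $\mathcal{F}_1(\IG)$ of lines, and for $r \ge 1$ the forgetful map $\overline{\mathcal{M}}_{0,r}(\IG,1) \to \overline{\mathcal{M}}_{0,0}(\IG,1)$ realizes $\overline{\mathcal{M}}_{0,r}(\IG,1)$ as an iterated $\mathbb{P}^1$-bundle (the universal line with its marked points), so smoothness and irreducibility descend from the $r=0$ case, and the dimension increases by exactly $r$. This explains the shape of the claimed dimension formula, since for $r=0$ the expected dimension is $\dim\IG + \int c_1(T_\IG) - 3 = k(2n+1-k) + (2n+2-k) - 3$ by Lemma~\ref{lemma:degq}(c), and $-\tfrac{k(k-1)}{2}$ will emerge as the actual dimension of the space of lines.

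The main work is therefore the case $r = 0$: show $\mathcal{F}_1(\IG)$ is smooth, irreducible, of the stated dimension. First I would recall (this is in Pech's thesis, but can be re-derived) that a line in the even Grassmannian $\IG^{ev} = \IG(k,2n+2)$ is a flag $A \subset B$ with $\dim A = k-1$, $\dim B = k+1$, $A$ isotropic, and $B$ isotropic \emph{modulo its radical}, i.e. $B$ lies in $A^{\perp_{\widetilde\omega}}$; the line consists of all $k$-planes between $A$ and $B$. A line on $\IG \subset \IG^{ev}$ is such a flag with the extra requirement that \emph{every} $k$-plane on the line is $\omega$-isotropic in $E = \C^{2n+1}$, equivalently $A \subset E$, $B \subset E$, and $B$ is $\omega$-isotropic, i.e. $B \subset A^{\perp_\omega} \cap E$. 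I would then stratify $\mathcal{F}_1(\IG)$ according to the position of the line relative to the two $\Sp_{2n+1}$-orbits, or more precisely according to whether $\mathbf{e}_1 \in A$, $\mathbf{e}_1 \in B \setminus A$, or $\mathbf{e}_1 \notin B$, and compute the dimension of each stratum as a fibration over a partial flag/isotropic-flag variety. The upshot should be that the generic stratum has dimension $\dim\IG + (2n+2-k) - \tfrac{k(k-1)}{2} - 3$ and that $\mathcal{F}_1(\IG)$ is irreducible of that dimension; smoothness then follows from a tangent-space computation at a general point, or by exhibiting $\mathcal{F}_1(\IG)$ locally as the zero locus of a section of a vector bundle on the Fano scheme of lines of $\Fl(k-1,k+1;E)$ that is everywhere transverse (the isotropy conditions on $A$ and $B$ are given by such sections).

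Alternatively — and this may be cleaner — I would use the embedding $\IG \hookrightarrow \IG^{ev}$ and the fact that $\IG = Y(w_0)$ is a smooth Schubert variety: lines on $\IG$ are precisely the lines on $\IG^{ev}$ that lie inside $Y(w_0)$, so $\mathcal{F}_1(\IG)$ is a closed subscheme of $\mathcal{F}_1(\IG^{ev})$, the latter being a smooth homogeneous space for $\Sp_{2n+2}$ of known dimension. One then analyzes the incidence and uses Bott–Samelson-type or Borel–Weil arguments, together with the $\Sp_{2n+1}$-action having finitely many orbits on $\IG$ (hence on $\mathcal{F}_1(\IG)$, by a standard argument, there are finitely many orbits of lines), to conclude irreducibility from the existence of a dense orbit. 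The hard part will be the dimension bookkeeping and, especially, the smoothness at points of $\mathcal{F}_1(\IG)$ lying over the \emph{closed} orbit $X_c$ — there the isotropy conditions degenerate (the radical $\ker\omega = \langle \mathbf{e}_1\rangle$ interacts with $A$ and $B$), and one must check that the defining equations of $\mathcal{F}_1(\IG)$ inside $\mathcal{F}_1(\Fl(k-1,k+1;E))$ still cut out a smooth variety of the expected codimension there; this is exactly where Pech's obstruction-theory computation does the real work, and I would follow her argument or reprove the vanishing of the obstruction space $H^1(\mathbb{P}^1, f^*T_\IG) = 0$ for every line $f\colon\mathbb{P}^1 \to \IG$ by decomposing $f^*T_\IG$ into line bundles of degree $\ge -1$ using the Euler-sequence description of $T_\IG$ restricted to a line.
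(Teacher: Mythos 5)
The paper does not prove this proposition: it is quoted from Pech's thesis (Proposition~2.15 of \cite{pech:thesis}; for $k=2$ see Proposition~13 of \cite{pech:quantum}), where it is established by obstruction theory. So there is no internal argument here to compare against — the relevant benchmark is Pech's proof, and against that benchmark your proposal identifies the right circle of ideas but leaves the decisive step as an announcement rather than an argument. Concretely: you reduce to the Fano variety of lines (legitimate, since $d=1$ has no nontrivial effective decomposition, so degree-one stable maps with $r\le 1$ have smooth domain and are embeddings onto lines), describe a line by its kernel–span flag $A\subset B$, and then say you ``would follow her argument or reprove'' the unobstructedness $H^1(\mathbb{P}^1,f^*T_{\IG})=0$ for every line $f\colon\mathbb{P}^1\to\IG$. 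That vanishing, especially at lines meeting the closed orbit where the isotropy conditions degenerate, \emph{is} the content of the proposition; as written, it is left entirely to the reader. The irreducibility of $\Mb_{0,0}(\IG,1)$ is also sketched in two incompatible ways (dense-orbit vs. generic-stratum dimension count) without either being carried out.

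Two further slips are worth flagging. First, you compute the $r=0$ expected dimension as $\dim\IG+\int c_1(T_{\IG})-3 = k(2n+1-k)+(2n+2-k)-3$ and then say $-\tfrac{k(k-1)}{2}$ ``will emerge as the actual dimension of the space of lines.'' This conflates $\dim\IG$ with $\dim\Gr(k,2n+1)$: since $\dim\IG = k(2n+1-k)-\tfrac{k(k-1)}{2}$, that correction is already contained in the expected dimension, and the assertion of the proposition is precisely that the Fano variety of lines has the \emph{expected} dimension — nothing extra should ``emerge.'' Second, the claim that $\Mb_{0,r}(\IG,1)\to\Mb_{0,0}(\IG,1)$ is an iterated $\mathbb{P}^1$-bundle is true for $r\le 2$ but fails for $r=3$: a $3$-pointed degree-$1$ stable map can have a reducible domain (one contracted component carrying a node and two marked points), so the domain of a $2$-pointed such map can already be nodal and $\Mb_{0,3}(\IG,1)\to\Mb_{0,2}(\IG,1)$ is the universal curve with some nodal fibers. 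Smoothness for $r=3$ still follows from the $r=2$ case, but via the general smoothness of the universal curve over a smooth base of genus-$0$ stable maps, not via a bundle structure.
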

 
The GW invariants satisfy the ``divisor axiom" property: if $[D]_T \in H^2_T(\mathcal{X})$ is a class in complex codimension $1$ then for any $\gamma_2, \ldots , \gamma_r \in H^*_T(\mathcal{X})$, \begin{equation}\label{E:divax} \langle [D]_T, \gamma_2, \ldots , \gamma_r \rangle_d = ([D] \cap d) \langle \gamma_2, \ldots , \gamma_r \rangle_d \/. \end{equation}
\subsection{The (equivariant) quantum cohomology ring} The quantum cohomology ring $\QH^(\IG)$ of $\IG:= \IG(k, 2n+1)$ is a graded $\Z(pt)[q]$-algebra with a $\Z[q]$-basis given by Schubert classes $[X(u)]$, where $u \in W^P \cap W^{odd}$.
The multiplication is given by \[ [X(u)] \star [X(v)] = \sum_{d \ge 0; w \in W^P} q^d c_{u,v}^{w,d} [X(w)] \/, \] where $c_{u,v}^{w,d} = \langle [X(u)], [X(v)], [X(w)]^\vee \rangle_d$ is the GW invariant. The degree of $q$ is 
\[\deg q= \int_{[X(Div)]^{\vee}} c_1(T_{\IG(k,2n+1)})=2n+2-k \/, \] by Lemma \ref{lemma:degq} above. The grading is equivalent to the requirement that \[ \textrm{codim}~ X(u) + \textrm{codim} ~X(v) = \textrm{codim}~ X(w) + d \cdot \deg q \/. \] The quantum cohomology ring is a deformation of the ordinary cohomology ring, in the sense that if one makes $q=0$ then one recovers the multiplication in $H^*_T(\IG)$.

As before there is an equivariant version of the quantum cohomology ring, denoted $\QH^*_T(\IG)$, which deforms the multiplication in $H^*_T(\IG)$. This is a graded, free algebra over $H^*_T(pt)[q]$ with a basis given by equivariant Schubert classes $[X(u)]_T$, where $u$ varies in $W^P \cap W^{odd}$. The multiplication is defined as before, using the equivariant GW invariants. The structure constants $c_{u,v}^{w,d} \in H^*_T(pt)$ are homogeneous polynomials of polynomial degree 
\[ \deg c_{u,v}^{w,d} = \textrm{codim}~ X(u) + \textrm{codim} ~X(v) - \textrm{codim}~ X(w) - d \cdot  \deg q \/. \]
If this degree equals $0$, then one recovers the structure constant from the ordinary (non-equivariant) quantum cohomology ring. 
\section{The moment graph}\label{s:moment} Sometimes called the GKM graph, the {\em moment graph} of a variety $X$ with a torus $T$ action has a vertex for each $T$-fixed locus of $X$, and an edge for each $1$-dimensional torus orbit. The description of the moment graphs for flag manifolds is well known, and it can be found e.g in \cite[Ch. XII]{kumar:kacmoody}. In this note we consider the moment graphs for $\IG:=\IG(k, 2n+1) \subset \IG^{ev}:= \IG(k, 2n+2)$. Let $P := P_k \subset \Sp_{2n+2}$ be the maximal parabolic for $\IG^{ev}$. The minimal length representatives in $w \in W^{P}$ are in one to one correspondence to sequences $1 \leq w(1) < \ldots < w(k) \leq \bar{1}$. Those corresponding to the odd-symplectic Grassmannian satisfy in addition that $w(i) \leq \bar{2}$ for $1 \leq i \leq n+1$. The moment graph of $\IG^{ev}$ has a vertex for each $w \in W^P$, and an edge $w \rightarrow ws_\alpha$ for each \[ \alpha \in R^+ \setminus R_{P_k}^+ = \{ t_i - t_j: 1 \leq i \leq k < j \leq n+1 \} \cup \{ t_i + t_j, 2 t_i : 1 \leq i <j \leq n+1, i \leq k  \} \/. \]
Geometrically, this edge corresponds to the unique torus-stable curve $C_\alpha(w)$ joining $w$ and $ws_\alpha$. This curve has degree $d$, where $\alpha^\vee + \Delta_P^\vee = d \alpha_k^\vee + \Delta_P^\vee$. The moment graph of $\IG$ is the full subgraph of that of $\IG^{ev}$ determined by the vertices $w \in W^P \cap W^{odd}$. Notice that the orbits of $T$ and $T_{2n+2}$ coincide, therefore we do not distinguish between the moment graphs for these tori. For later use, we list below the vertices adjacent to the identity element in the moment graph of $\IG^{ev}$, together with the degrees of the corresponding curves. Recall the convention $\bar s = 2n+3 - s$. For now we let $k>1$.
\begin{enumerate}
\item[(i)] $(1<2<\cdots<i-1<i+1<\cdots<k<j)$ where $k<j \leq n+1$; 
\item[(ii)] $(1<2<\cdots<i-1<i+1<\cdots<k< \bar j)$ where $n+1<\bar j \leq \overline{k+1}$;
\item[(iii)] $(1<2<\cdots<i-1<i+1<\cdots<j-1<j+1<\cdots<k<\bar j< \bar i)$;
\item[(iv)] $(1<2<\cdots<i-1<i+1<\cdots<k<\overline{i})$ .
\end{enumerate}
The edge in (i) corresponds to $\alpha = t_i - t_j$, those in (ii) and (iii) to $\alpha = t_i + t_j$ and that in (iv) to $\alpha = 2 t_i$. In particular, only the edge in (iii) has degree $2$, and the others have degree $1$. If $k=1$, the case (iii) does not apply, and the remaining vertices in cases (i), (ii) and (iv) are respectively $(j), (\bar j)$ and $(\bar 1)$. The figure below illustrates the moment graphs of $\IG(2,5)$ and $\IG(2,6)$.
\begin{figure}[h!]\label{figurea}
\caption{The figure is the moment graph of $\IG(2,6)$ without degree labels. The blue portion corresponds to vertices outside the Schubert variety $\IG(2,5)$. The red portion is inside the closed orbit $\IG(1,4)= \mathbb{P}^3$.}
\begin{tikzpicture}[-,>=stealth',shorten >=1pt,auto,node distance=1.9cm,
                    thick,main node/.style={draw,font=\sffamily\tiny\bfseries}]

  \node[main node] (1) {$(\bar{3}<\bar{2})$};
  \node[main node] (2) [below of=1] {$(3<\bar{2})$};
    \node[main node] (3) [below left of=2] {$(2<\bar{3})$};
  \node[main node] (4) [ red, below right of=2] {$(1<\bar{2})$};
  \node[main node] (5) [below of=3]{$(2<3)$};
  \node[main node] (6) [ red, below of=4]{$(1<\bar{3})$};
   \node[main node] (7) [red, below right of=5]{$(1<3)$};
   \node[main node] (8) [ red, below of=7]{$(1<2)$};
   \node[main node] (9) [blue,left of=2] {$(2<\bar{1})$};
    \node[main node] (10) [blue, left of=1] {$(3<\bar{1})$};
     \node[main node] (11) [blue, above of=1] {$(\overline{3}<\bar{1})$};
\node[main node] (12) [blue, above of=11] {$(\overline{2}<\bar{1})$};

  \path[every node/.style={font=\sffamily\large}]
    (8) edge [red] node {}  (7)
         edge node {} (5)
          edge [red] node {} (6)
          edge node {} (3)
          edge [red] node {} (4)
          edge [blue,dotted,bend left] node {} (9)
          edge [blue, dotted,bend right] node {} (12)
    (7) edge node {} (5)
          edge node {} (2)
          edge [red] node  {} (4) 
          edge [red] node {} (6)
            edge [blue,dotted] node {} (10)
          edge [blue, dotted,bend right] node {} (11)
    (6) edge node [right] {} (3)
          edge  node [right] {} (1)
          edge [red] node [right] {} (4)
             edge [blue,dotted] node {} (10)
          edge [blue, dotted] node {} (11)
    (5) edge  node {} (3)
          edge  node {} (1)
          edge node {} (2)
           edge [blue,dotted] node {} (9)
          edge [blue, dotted, bend left] node {} (10)
    (4) edge node {} (2)      
          edge  node {} (1)
           edge [blue,dotted] node {} (9)
          edge [blue, dotted,bend right] node {} (12)
    (3) edge  node {} (1)
          edge  node [right] {} (2)
          edge [blue,dotted] node {} (9)
          edge [blue, dotted] node {} (11)
    (2) edge node {} (1)
    edge [blue,dotted] node {} (10)
    edge [blue, dotted,bend right] node {} (12)
    (1)edge [blue,dotted] node {} (11)
          edge [blue, dotted,bend left] node {} (12)
       (9)edge [blue,dotted] node {} (10)
          edge [blue, dotted] node {} (11)
          edge [blue, dotted] node {} (12)
        (10)edge [blue,dotted] node {} (11)
          edge [blue, dotted] node {} (12)
          (11)edge [blue,dotted] node {} (12);
   
\end{tikzpicture}
\end{figure}
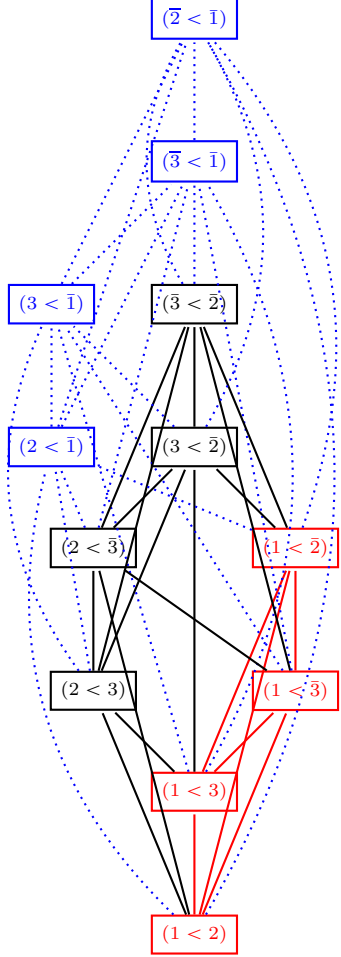 

\section{Curve Neighborhoods}\label{s:curvenbhds} Let $\mathcal{X} \in \{ \IG, \IG^{ev} \}$, let $d \in H_2(\mathcal{X})$ be an effective degree, and let $\Omega \subset \mathcal{X}$ be a closed subvariety. Consider the moduli space of stable maps $\Mb_{0,2}(\mathcal{X},d)$ with evaluation maps $\ev_1, \ev_2$. The {\em curve neighborhood} of $\Omega$ is the subscheme \[ \Gamma_d(\Omega):= \ev_2( \ev_1^{-1} \Omega) \subset \mathcal{X} \] endowed with the reduced scheme structure. This notion was introduced by Buch, Chaput, Mihalcea and Perrin \cite{BCMP:qkfin} to help study the quantum K theory ring of cominuscule Grassmannians. It was analyzed further for any homogeneous space by Buch and Mihalcea \cite{buch.m:nbhds}, in relation to $2$-point K-theoretic GW invariants, and to a new proof of the quantum Chevalley formula. Often, estimates for the dimension of the curve neighborhoods provide vanishing conditions for certain GW invariants. In this paper we will use this technique to prove vanishing of ``Chevalley" GW invariants of degree $d \ge 2$ in $\IG$.

We start with the observation (going back to \cite{BCMP:qkfin}) that if $\Omega$ is a Schubert variety, then $\Gamma_d(\Omega)$ must be a (finite) union of Schubert varieties, stable under the same Borel subgroup. This follows because $\Omega$ is stable under the appropriate Borel subgroup, and $\ev_1, \ev_2$ are proper, equivariant maps; thus $\Gamma_d(\Omega)$ is closed and Borel stable. Further, it was proved in \cite{BCMP:qkfin} that the curve neighborhood $\Gamma_d(Y(w))$ of any Schubert variety is again a Schubert variety. This Schubert variety was described in \cite{buch.m:nbhds}: $\Gamma_d(Y(w)) = Y(w \cdot z_d W_P )$, where $z_d \in W$ is {\em defined} by the condition that $\Gamma_d(1.P) = Y(z_d W_P)$. We recall next a recursive formula for $z_d$. Recall also that $q_{2n+2}$ denotes the quantum parameter for $\QH^*(\IG^{ev})$ and it has degree $2n+3 -k$. The maximal elements of the set $\{\beta \in R^+ \setminus R_{P}^+: \beta^\vee+\Delta_{P}^\vee \leq d \}$ are called {\it maximal roots} of $d$. The root $\beta \in R^+ \setminus R_{P}^+$ is called {\it ${P}$-cosmall} if $\beta$ is a maximal root of $\beta^{\vee}+\Delta_{P}^{\vee} \in H_2(\IG(k,2n+2))$. In type $C_{n+1}$, the $P$-cosmall roots are the roots $2t_i$ for $1 \leq i \leq n+1$, and $t_i-t_j$ for $1 \leq i<j \leq n+1$. The following follows from \cite[Corollary 4.12, Theorem 6.2, Theorem 5.1, and Theorem 7.2]{buch.m:nbhds}.

\begin{prop}\label{ggg} Let $d \in H_2(\IG(k,2n+2))$ be an effective degree and $w \in W^P$. Then the following hold:
\begin{enumerate}
\item If $\alpha \in R^+ -R_{P}^+$ is a maximal root of $d$, then $s_\alpha \cdot z_{d - \alpha^\vee} W_P = z_d W_P$; 
\item $\dim \Gamma_d(Y(w)) \leq \ell(w) + \ell(z_d W_P) \leq \ell(w) + d \cdot \deg q_{2n+2} -1 $. Furthermore, if the second equality occurs then $d=\alpha^\vee+\mathbb{Z} \Delta_{P}^{\vee}$ and $\alpha$ is a $P$-cosmall root.
\end{enumerate}
\end{prop}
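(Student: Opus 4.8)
The plan is to deduce the statement from the general theory of curve neighborhoods of Schubert varieties in homogeneous spaces developed in \cite{buch.m:nbhds}, applied to $\IG^{ev} = \Sp_{2n+2}/P_k$. The key input is the identification $\Gamma_d(Y(w)) = Y(w \cdot z_d W_P)$, where $z_d \in W$ is defined by $\Gamma_d(1 \cdot P) = Y(z_d W_P)$; this is \cite[Cor.~4.12]{buch.m:nbhds}. Since a curve neighborhood of a Schubert variety is again a Schubert variety, its dimension equals $\ell(w \cdot z_d W_P)$.

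For part (1), I would invoke the recursive description of $z_d$ from \cite[Thm.~6.2]{buch.m:nbhds}: curve neighborhoods of the base point are built up degree by degree through maximal roots. Explicitly, if $\alpha \in R^+ \setminus R_P^+$ is a maximal root of $d$, then applying the degree-$\alpha^\vee$ curve neighborhood operation to $Y(z_{d-\alpha^\vee} W_P)$ recovers $Y(z_d W_P)$, which in terms of Hecke products reads $s_\alpha \cdot z_{d-\alpha^\vee} W_P = z_d W_P$. No computation beyond this citation is needed.

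For part (2), the first inequality is immediate: $\dim \Gamma_d(Y(w)) = \ell(w \cdot z_d W_P) \leq \ell(w) + \ell(z_d W_P)$ by Lemma~\ref{lemma:bound}. The second inequality $\ell(z_d W_P) \leq d \cdot \deg q_{2n+2} - 1$, together with the characterization of when it is an equality, is the sharp length estimate of \cite[Thm.~5.1 and Thm.~7.2]{buch.m:nbhds}: equality holds exactly when $d$ is a cosmall class, i.e. $d = \alpha^\vee + \mathbb{Z}\Delta_P^\vee$ for a $P$-cosmall root $\alpha$. What remains is bookkeeping: identify the $H_2$-class $\alpha^\vee + \mathbb{Z}\Delta_P^\vee$ with the integer $d$ via $\alpha^\vee + \Delta_P^\vee = d\,\alpha_k^\vee + \Delta_P^\vee$ as in \S\ref{s:moment}, recall $\deg q_{2n+2} = 2n+3-k$ from Lemma~\ref{lemma:degq}, and read off from the $C_{n+1}$ root data (as recorded above) that the $P_k$-cosmall roots are precisely $2t_i$ for $1 \leq i \leq n+1$ and $t_i - t_j$ for $1 \leq i < j \leq n+1$.

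The step I expect to require the most care is this translation layer rather than any single citation: keeping the degree conventions ($H_2$-classes versus non-negative integers, coroots versus multiples of $\alpha_k^\vee$), the parabolic $P = P_k$, and the list of $P$-cosmall roots mutually consistent, so that the abstract theorems of \cite{buch.m:nbhds} produce exactly the inequality $\ell(z_d W_P) \leq d \cdot \deg q_{2n+2} - 1$ with the stated equality criterion. Once the dictionary is fixed, the proof is a direct assembly of the four cited results.
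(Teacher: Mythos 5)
Your proposal matches the paper's approach exactly: the paper proves Proposition~\ref{ggg} simply by citing \cite[Corollary~4.12, Theorem~6.2, Theorem~5.1, and Theorem~7.2]{buch.m:nbhds}, which are precisely the four results you invoke, in the same roles. Your unpacking of which citation carries which claim, and the remark about keeping the degree conventions and the $P_k$-cosmall root list consistent, is a faithful (and slightly more detailed) rendering of the same argument.
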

\begin{cor}\label{cor:evest} (a) If $k >1$ then there is an equality $z_1 W_P = s_{2 t_1} W_P$ and the minimal length representative of $z_1 W_P$ is $(2<3<\cdots<k<\overline{1})$. 

(b) There is an inequality $\ell(z_d W_P) \leq d \deg q_{2n+2} -1 $ with equality if and only if $d=1$. 

(c) If $k>1$ and $d=2$ then $z_2 W_P = s_{t_1+t_2} W_P$ and $\ell(z_2 W_P) = 2 \deg q_{2n+2} -3$. 

(d) If $k=1$ then $z_1W_P=z_2W_P$ and $\ell(z_2W_P)=2n+1<2 \deg q_{2n+2} -3$.
\end{cor}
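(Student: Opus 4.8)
The proof is a direct computation based on Proposition~\ref{ggg}, using $\deg q_{2n+2}=2n+3-k$ and the combinatorial model identifying a coset $wW_P\in W^P$ (here $P=P_k$) with its window $\{w(1)<\dots<w(k)\}$. The first ingredient I would establish is the list of degrees of $P$-cosmall roots: expanding coroots in the basis $\{\alpha_i^\vee\}$ gives $(2t_i)^\vee=e_i\equiv\alpha_k^\vee$ for $i\le k$, and $(t_i-t_j)^\vee=e_i-e_j\equiv\alpha_k^\vee$ for $i\le k<j\le n+1$, both modulo $\Z\Delta_P^\vee$. Hence \emph{every} $P$-cosmall root lying in $R^+\setminus R_P^+$ has degree exactly $1$; in particular the highest root $2t_1$ of $C_{n+1}$ has degree $1$.

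For (a) and (b): since $2t_1$ is the highest root, it is the unique maximal root of the degree $d=1$ and it is $P$-cosmall, so Proposition~\ref{ggg}(1) gives $z_1W_P=s_{2t_1}\cdot z_0W_P=s_{2t_1}W_P$; applying $s_{2t_1}$ — which sends $1\mapsto\overline 1$ and fixes $2,\dots,k$ — to the base window $\{1,\dots,k\}$ and re-sorting yields the minimal representative $(2<3<\dots<k<\overline 1)$, which is (a). In (b), the inequality is the last estimate in Proposition~\ref{ggg}(2). If equality holds, its ``furthermore'' clause forces $d$ to be the degree of a $P$-cosmall root, hence $d=1$ by the previous paragraph; conversely, for $d=1$ one checks that $\ell(z_1W_P)=\ell(2<3<\dots<k<\overline 1)=2n+2-k=\deg q_{2n+2}-1$ (by a direct inversion count, or because $Y(z_1W_P)=\Gamma_1(1.P)$ is swept out by the lines of $\IG^{ev}$ through the base point, of dimension $\langle c_1(T_{\IG^{ev}}),[\mathrm{line}]\rangle-1$).

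For (c), with $k>1$ and $d=2$: the root $2t_1$ has degree $1\le 2$ and is still the unique maximal root, so Proposition~\ref{ggg}(1) gives $z_2W_P=s_{2t_1}\cdot z_{2-(2t_1)^\vee}W_P=s_{2t_1}\cdot z_1W_P=s_{2t_1}\cdot s_{2t_1}W_P$. I would evaluate this Hecke product by fixing the reduced word $s_{2t_1}=s_1s_2\cdots s_ns_{n+1}s_n\cdots s_2s_1$ and applying the associated simple Hecke operators one at a time, from the right, to the window $(2<3<\dots<k<\overline 1)$ of $z_1W_P$: at each step the window is replaced by the larger of itself and its image under the given simple reflection. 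The outcome stabilizes at the window $(3<4<\dots<k<\overline 2<\overline 1)$ of $s_{t_1+t_2}W_P$, and an inversion count for this signed permutation (equivalently $\ell(s_{t_1+t_2})=2\,\mathrm{ht}((t_1+t_2)^\vee)-1=4n-1$ minus the length $2(k-2)$ of its $W_P$-tail) gives $\ell(z_2W_P)=4n+3-2k=2\deg q_{2n+2}-3$. For (d), when $k=1$ we have $\IG^{ev}=\Pbb^{2n+1}$, and (a) gives $z_1W_{P_1}=s_{2t_1}W_{P_1}$ with window $(\overline 1)$, the maximal element of $W^{P_1}$; so $\ell(z_1W_{P_1})=2n+1$, and since $z_1W_{P_1}$ is already maximal, $z_2W_{P_1}=s_{2t_1}\cdot z_1W_{P_1}=z_1W_{P_1}$, while $2n+1<4n+1=2\deg q_{2n+2}-3$ because $n\ge 1$.

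The only genuinely delicate step is the Hecke-product computation in (c): one must follow the window through the full reduced word of $s_{2t_1}$ and check that the process terminates precisely at $s_{t_1+t_2}W_P$, neither stopping too early nor overshooting; since the intermediate windows depend on $k$, this needs a short but careful bookkeeping argument (it is immediate when $k=2$, where $z_2W_P$ is the top cell). Everything else reduces to Proposition~\ref{ggg} and routine length computations in the Weyl group of type $C_{n+1}$.
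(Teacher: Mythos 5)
Your argument for (a), (b), and (d) follows the paper's essentially verbatim: you invoke the recursion of Proposition~\ref{ggg}(1) with the highest root $2t_1$ as the maximal root of degree $1$, the ``furthermore'' clause of Proposition~\ref{ggg}(2), and the fact that every $P$-cosmall root in $R^+\setminus R_P^+$ has degree $1$ (which you helpfully spell out via the coroot expansion; the paper only says ``a calculation of degrees of roots shows\ldots''). For (d) your reasoning coincides with the paper's ($\IG(1,2n+2)=\mathbb{P}^{2n+1}$), though note that you cite (a) for $k=1$ even though (a) is stated under $k>1$; the underlying equality $z_1W_P=s_{2t_1}W_P$ holds for all $k$ directly from Proposition~\ref{ggg}(1), and that is what you should quote.

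For (c) you take a different computational route. The paper evaluates $s_{2t_1}\cdot s_{2t_1}$ entirely inside $W$ by a short algebraic manipulation: using $s_{2t_1}=s_1\,s_{2t_2}\,s_1$, commutation of $s_1$ with $s_{t_1+t_2}$, and idempotence of $s_1$ in the Hecke monoid, it reduces the double product to $s_{t_1+t_2}\cdot s_1$, and only at the last step divides out $s_1\in W_P$. You instead push the reduced word of $s_{2t_1}$ letter by letter through the coset window $(2<\cdots<k<\overline 1)$. The endpoint $(3<\cdots<k<\overline 2<\overline 1)$ is correct (I checked small $k$), but as you acknowledge, you have not actually carried out the window bookkeeping for general $k$ — and that bookkeeping is precisely the content of the step, while the paper's $W$-level computation is already complete, short, and manifestly $k$-independent. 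So for (c) your write-up is correct in outline but still owes the one nontrivial calculation; the paper's approach genuinely buys you a cleaner, finished proof.

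A small slip worth fixing: the formula $\ell(s_{t_1+t_2})=2\,\mathrm{ht}((t_1+t_2)^\vee)-1$ does not give $4n-1$; in simple coroots $\mathrm{ht}((t_1+t_2)^\vee)=2n+1$, so this expression equals $4n+1$. Replacing coroot height by root height happens to give $4n-1$ here, but the identity $\ell(s_\alpha)=2\,\mathrm{ht}(\alpha)-1$ is not uniformly valid in type $C$ (it fails for the long root $2t_1$, whose reflection has length $2n+1$, not $4n+1$). The safe route — and what the paper actually does — is to read off $\ell(z_2W_P)=4n+3-2k=2\deg q_{2n+2}-3$ directly from the minimal length representative $(3<\cdots<k<\overline 2<\overline 1)$ via the partition dictionary of \S\ref{ss:dictionary}, or to count inverted positive roots by hand.
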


\begin{proof} The first part follows directly from the part (1) of the proposition. The equality in (b) follows by direct calculation of $\ell(s_{2t_1} W_P)$, using its minimal length representative. A calculation of degrees of roots shows that no degree $d  \geq 2$ can be the degree of a cosmall root, thus equality cannot occur in this case.

For part (c), notice that $2t_1$ is a maximal root of $d=1$, therefore $z_1 W_P = s_{2 t_1} W_P$. By the recursion in Proposition \ref{ggg} we obtain $z_2W_P=s_{2t_1}\cdot s_{2t_1}W_P$. Now observe the following:  \begin{eqnarray*}
s_{2t_1}\cdot s_{2t_1}&=&(s_1 \dots s_{n+1} \dots s_1)\cdot (s_1 \dots s_{n+1} \dots s_1)
= s_1 \cdot s_{2t_{2}} \cdot s_1 \cdot s_{2t_{2}} \cdot s_1 =s_1 \cdot s_{t_1+t_{2}} \cdot s_1\\
&=&s_{t_1+t_{2}} \cdot s_1 \cdot s_1 =s_{t_1+t_{2}} \cdot s_1 \/.
\end{eqnarray*}
Since $s_1 \in W_P$, the above shows that $z_2 W_P = s_{t_1+ t_2} W_P$ as claimed. The equality $\ell(z_2 W_P) = 2 \deg q_{2n+2} -3$ follows by a direct calculation, using the minimal length representative of $s_{t_1+t_2}W_P$. Finally, part $(d)$ follows from the observation that if $k=1$, then $\IG(1, 2n+2) = \mathbb{P}^{2n+1}$, and then $\Gamma_1(id) = \mathbb{P}^{2n+1}$, thus $\ell(z_1W_P)= \dim \IG(1,2n+2)$.
\end{proof}

In what follows we give estimates for the dimension of the curve neighborhoods of Schubert varieties $X(w) \subset \IG$, using known estimates for the dimension in the even case. We will need the following lemma.

\begin{lemma}
\label{Lem11}
Let $w=(w(1)<w(2)<\cdots<w(k)) \in W^P$. Then $\ell( w \cdot z_1 W_P) = \ell(w) + \ell(z_1W_P)$ if and only if $w(1)=1$. In particular, $\dim \Gamma_1(Y(w)) = \ell(w) + \deg q_{2n+2} -1$ if and only if $Y(w) \subset X_c$ is a Schubert variety in the closed orbit of $\IG$.  
\end{lemma}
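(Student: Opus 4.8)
The plan is to reduce the equivalence to a length computation in the Weyl group and to deduce the geometric statement afterwards. By Corollary \ref{cor:evest}(a) the minimal length representative of $z_1 W_P$ is $\zeta := (2 < 3 < \cdots < k < \overline{1}) \in W^P$ (for $k=1$ read $\zeta = (\overline{1})$; the argument below is uniform). Applying the equivalence (i)$\Leftrightarrow$(ii) of Lemma \ref{lemma:bound} with $u = w$ and $v = \zeta$, the desired equality $\ell(w \cdot z_1 W_P) = \ell(w) + \ell(z_1 W_P)$ holds if and only if \emph{both} $\ell(w\zeta) = \ell(w) + \ell(\zeta)$ \emph{and} $w\zeta \in W^P$. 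For the first condition I will use the standard fact that $\ell(w\zeta) = \ell(w)+\ell(\zeta)$ if and only if $N(w) \cap N(\zeta^{-1}) = \emptyset$, i.e. $w$ carries every root of $N(\zeta^{-1})$ to a positive root; here $N(x) := \{\alpha \in R^+ : x(\alpha) \in R^-\}$.

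First I would compute the inversion set $N(\zeta^{-1})$. Writing $\zeta^{-1} = (\overline{k}, 1, 2, \ldots, k-1, k+1, \ldots, n+1)$ as a signed permutation, a direct check of the positive roots of $C_{n+1}$ yields
\[ N(\zeta^{-1}) = \{ t_1 - t_j : 2 \le j \le n+1 \} \cup \{ t_1 + t_j : k+1 \le j \le n+1 \} \cup \{ 2t_1 \}\/. \]
Note the asymmetry: every $t_1 - t_j$ with $j \ge 2$ occurs, but $t_1 + t_j$ occurs only for $j > k$. This is exactly the point where the length condition will fail to be self-sufficient.

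Next I would handle the two directions. If $w(1) = 1$, then $w(t_1) = t_1$ and $w$ restricts to a signed permutation of $\{\pm t_2, \ldots, \pm t_{n+1}\}$; hence $w(2t_1) = 2t_1 > 0$ and, for $j \ge 2$, $w(t_1 \pm t_j) = t_1 \pm w(t_j)$ is a positive root of the form $t_1 \pm t_p$ with $p \ge 2$. Thus $N(w) \cap N(\zeta^{-1}) = \emptyset$. Also, $w(1) = 1$ forces $w(j) \neq \overline{1}$ for all $j$, and the first $k$ entries of $w\zeta$ in one-line notation are $(w(2), \ldots, w(k), \overline{1})$, which is strictly increasing; so $w\zeta \in W^P$. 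Both conditions hold, giving the equality. Conversely, assume $w(1) \neq 1$. If $w(t_1) < 0$ then $2t_1 \in N(w) \cap N(\zeta^{-1})$. Otherwise $w(t_1) = t_p$ with $p \ge 2$; let $m_0 \ge 2$ be the unique index with $|w(t_{m_0})| = t_1$. If $w(t_{m_0}) = t_1$, then $w(t_1 - t_{m_0}) = t_p - t_1 < 0$, and since $2 \le m_0 \le n+1$ we have $t_1 - t_{m_0} \in N(\zeta^{-1})$, so the length condition fails. If $w(t_{m_0}) = -t_1$, then $w(m_0) = \overline{1}$ is the maximal entry, so the increasing conditions defining $W^P$ force $m_0 = k$ (in particular $k \ge 2$); but then the first $k$ entries of $w\zeta$ end in $(\ldots, \overline{1}, \overline{p})$ with $\overline{p} < \overline{1}$, so $w\zeta \notin W^P$. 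In all cases one of the two conditions of Lemma \ref{lemma:bound} fails, so $\ell(w \cdot z_1 W_P) < \ell(w) + \ell(z_1 W_P)$, which completes the equivalence.

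Finally, to obtain the geometric statement, recall that $\Gamma_1(Y(w)) = Y(w \cdot z_1 W_P)$ has dimension $\ell(w \cdot z_1 W_P)$, and $\ell(z_1 W_P) = \deg q_{2n+2} - 1$ by Corollary \ref{cor:evest}(b); so the equivalence just proved says exactly that $\dim \Gamma_1(Y(w)) = \ell(w) + \deg q_{2n+2} - 1$ if and only if $w(1) = 1$, which (for $w$ a minimal representative) is the condition recalled above that $Y(w)$ lies in the closed orbit $X_c$ of $\IG$. The step I expect to be the real obstacle is the sub-case $w(t_{m_0}) = -t_1$, equivalently $w(k) = \overline{1}$: there $\ell(w\zeta)$ can still equal $\ell(w) + \ell(\zeta)$, so one genuinely needs the ``$w\zeta$ is a minimal representative'' half of Lemma \ref{lemma:bound}; pinning down precisely which of the roots $t_1 \pm t_j$ lie in $N(\zeta^{-1})$ is the delicate bookkeeping that makes this work.
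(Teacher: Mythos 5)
Your proof is correct, and it takes a different computational route from the paper in the "only if" direction. Both proofs share the same skeleton: the representative $\overline{z}_1 = (2<\cdots<k<\overline{1})$, the equivalence from Lemma~\ref{lemma:bound}, and the explicit one-line formula $w\overline{z}_1 = (w(2),\ldots,w(k),\overline{w(1)},w(k+1),\ldots,w(n+1))$ (which is implicit in your argument for when $w\zeta$ lies in $W^P$). Where you diverge is in analyzing the length condition: the paper invokes the permutation-to-partition bijection of \S\ref{ss:dictionary} to compute $\ell(w\overline{z}_1)-\ell(w) = 2n+4-2w(1)-k+\#\{j : w(1)+w(j)>2n+3\}$ under the assumption that $w\overline{z}_1 \in W^P$, and then reads off $w(1)=1$ from the numerics; you instead compute the inversion set $N(\zeta^{-1})$ directly and run a case analysis on $w(1)$, pinpointing either a common inversion or a failure of the minimal-representative condition. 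Your inversion-set computation is correct, and your observation that the sub-case $w(k)=\overline{1}$ genuinely needs the second half of Lemma~\ref{lemma:bound} (the length condition alone can hold while $w\zeta \notin W^P$) is a real subtlety worth flagging: e.g., $w=(2,\overline{1})$ in $\IG(2,6)$ has $N(w)\cap N(\zeta^{-1})=\emptyset$ yet $w\zeta=(\overline{1},\overline{2})\notin W^P$. The paper's version is shorter once the partition machinery is in place; your version is self-contained and more elementary, avoiding the appeal to the later section, at the cost of a delicate root-by-root bookkeeping.
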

\begin{proof} Let $\overline{z}_1 := (2<3<\cdots<k<\overline{1}) \in W^P$ be the minimal length representative of $z_1 W_P$. By Lemma \ref{lemma:bound}, $\ell( w \cdot z_1 W_P) = \ell(w) + \ell(z_1W_P)$ if and only if the product $w \overline{z}_1$ is reduced and it is a minimal length representative. We calculate $w \overline{z}_1 = (w(2), w(3) , \ldots , w(k), \overline{w(1)}, w(k+1), \ldots , w(n+1))$. If $w(1) =1$ then clearly $w \overline{z}_1 \in W^P$, and one checks $\ell(w \overline{z}_1) = \ell(w) + \ell(\overline{z}_1)$. Conversely, if $w \cdot \overline{z_1} =w \overline{z}_1 \in W^P$, then one uses the bijection between $W^P$ and the strict partitions described in \S \ref{ss:dictionary} below to calculate that \[ \ell (w \overline{z}_1)-\ell(w) = 2n+4-2w(1) -k+\#\{j: w(1)+w(j) > 2n+3 \} \/. \] The length condition forces $w(1) =1$.~(For a similar proof see Proposition \ref{prop:dictionary} below).\end{proof}
%
%\begin{proof} 
%{\color{blue} Using the bijection between $W^{P}$ and $\Lambda^{2n+2}$ and that fact that $\mbox{codim } Y(\lambda)=|\lambda|$ we have that
%\begin{eqnarray*}
%\ell(w z_1)-\ell(w)&=&\mbox{codim }Y(w)-\mbox{codim }Y(wz_1)\\
%&=&2n+4-2w_1-k+\#\{j: w_1+w_j>2n+3 \}
%\end{eqnarray*}
%So the following equations are equivalent.
%\begin{eqnarray*}
%\ell(w z_1)-\ell(w)&=&\ell(z_1)\\
%2n+4-2w(1)-k+\#\{j: w(1)+w(j)>2n+3 \}&=&2n+2-k\\
%2-2w(1)+\# \{j: w(1)+w(j)>2n+3\}&=&0\\
%\# \{j: w(1)+w(j)>2n+3\}&=&2w(1)-2
%\end{eqnarray*}
%The equality implies $w(1)=1$. 
%
%For the reverse direction let $w(1)=1$. Observe it is sufficient to check $wz_1$ is a minimal length representative by the equivalent equations in the first part of the proof. Let $w=(1<w(2)<\cdots<w(k)|w(k+1)<\cdots<w(n+1))$ and recall $z_1=(2<3<\cdots<k<\bar{1}|(k+1)<\cdots<(n+1))$. The multiplication $wz_1$ in $W$ where the permutations are in one line notation is 
%\begin{eqnarray*}
%(1w(2)\cdots w(k)w(k+1)\cdots w(n+1))&*&(23\cdots k\bar{1}(k+1)\cdots(n+1))\\
%&=&w(2)w(3)\cdots w(k) \bar{1} w(k+1) \cdots w(n+1) .
%\end{eqnarray*}
% This direct calculation that shows $wz_1$ is a minimal length representative. The result follows. }\end{proof}

\subsection{Curve neighborhoods for $\IG(k,2n+1)$} Let $w \in W^P \cap W^{odd}$ and let $d \in H_2(\IG)$ be an effective degree. As mentioned above, the curve neighborhood $\Gamma_d(X(w))$ of $X(w)$ is a closed, $B$-stable subvariety of $\IG$, therefore it must be an union of Schubert varieties: \[ \Gamma_d(X(w))=X(w^1) \cup  \cdots \cup X(w^r)\] where $w^i \in W^P \cap W^{odd}$. As noticed in \cite[\S 5.2]{buch.m:nbhds} and \cite[Cor. 5.5]{mare.mihalcea}, the permutations $w^i$ can be determined combinatorially from the moment graph. 

\begin{prop}
\label{thm11} Let $w \in W^P \cap W^{odd}$. In the moment graph of $\IG(k,2n+1)$, let $\{v^1, \cdots, v^s\}$ be the maximal vertices in the moment graph which can be reached from any $u \leq w$ using a path of degree $d$ or less. Then $\Gamma_d(X(w))=X(v^1) \cup \cdots \cup X(v^s)$.
\end{prop}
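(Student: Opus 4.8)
The plan is to reduce the claim to the corresponding statement for the homogeneous space $\IG^{ev} = \IG(k, 2n+2)$, for which the curve neighborhood $\Gamma_d(Y(w)) = Y(w \cdot z_d W_P)$ is known to be a single Schubert variety by \cite{buch.m:nbhds}, and then to use the fact that $\IG$ is a smooth Schubert subvariety of $\IG^{ev}$ to extract the decomposition in the odd case from the moment graph. First I would recall, following \cite[\S 5.2]{buch.m:nbhds} and \cite[Cor. 5.5]{mare.mihalcea}, the general principle: if $\Omega$ is a $B$-stable subvariety of a $T$-variety with isolated fixed points, then $\Gamma_d(\Omega)$ is $B$-stable, hence a union of $B$-orbit closures, and its $T$-fixed points are exactly the vertices of the moment graph reachable by a chain of $T$-stable curves of total degree $\le d$ starting from a fixed point of $\Omega$. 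Since $\Gamma_d(\Omega)$ is a union of Schubert varieties, it is determined by its set of $T$-fixed points, and in fact by the \emph{maximal} such fixed points in Bruhat order: if $\{v^1,\dots,v^s\}$ are these maximal vertices then $\Gamma_d(\Omega) = \bigcup_i X(v^i)$.

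The key step is to verify that a $T$-stable curve in $\IG$ through a $T$-fixed point $uP \in \IG$ is the same thing as a $T$-stable curve in $\IG^{ev}$ through $uP$ whose other endpoint $u s_\alpha P$ again lies in $\IG$, i.e. satisfies $u s_\alpha \in W^P \cap W^{odd}$; this is exactly what the discussion of the moment graph of $\IG$ as the full subgraph of that of $\IG^{ev}$ on the vertices $W^P \cap W^{odd}$ provides (the orbits of $T$ and $T_{2n+2}$ coincide, and $\IG$ is $T$-stable and closed in $\IG^{ev}$, so a torus-stable curve of $\IG^{ev}$ either lies entirely in $\IG$ or meets it in at most one fixed point — in fact since $\IG$ is a union of Schubert cells, a $T$-stable curve with both endpoints in $\IG$ lies in $\IG$). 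Given this, a degree-$d$ chain of $T$-stable curves in $\IG$ from a fixed point $uP$ with $u \le w$ is literally a degree-$\le d$ path in the moment graph of $\IG$, and conversely. Hence the set of $T$-fixed points of $\Gamma_d(X(w))$ is precisely $\{ vP : v$ reachable from some $u \le w$ by a path of degree $\le d$ in the moment graph of $\IG\}$. Taking the $B$-orbit closures of the maximal such vertices $v^1,\dots,v^s$ gives $\Gamma_d(X(w)) = X(v^1) \cup \cdots \cup X(v^s)$.

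The one point requiring care — and I expect it to be the main obstacle — is the passage from ``$\Gamma_d(X(w))$ has these $T$-fixed points'' to ``$\Gamma_d(X(w))$ equals this union of Schubert varieties.'' On the homogeneous space side this is automatic because every $B$-stable irreducible closed subvariety is a Schubert variety; but $\IG$ is not homogeneous, so a priori $\Gamma_d(X(w))$ could have a $B$-stable irreducible component that is not a Schubert variety. To rule this out I would argue that $\Gamma_d(X(w))$ is contained in $\Gamma_d(Y(w)) \cap \IG$ (since $\ev_1^{-1}(X(w)) \subset \ev_1^{-1}(Y(w))$ and $\ev_2$ has image in $\IG$), and $\Gamma_d(Y(w)) = Y(w \cdot z_d W_P)$ is a single Schubert variety of $\IG^{ev}$; intersecting with the Schubert subvariety $\IG = Y(w_0 W_P)$ and using that both are $B$-stable, $\Gamma_d(Y(w)) \cap \IG$ is again a union of Schubert varieties of $\IG^{ev}$ lying in $\IG$, i.e. of Schubert varieties of $\IG$. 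Thus $\Gamma_d(X(w))$ is a $B$-stable closed subset of a union of Schubert varieties of $\IG$; being itself closed and $B$-stable, and containing the Schubert cells attached to each of the maximal reachable vertices $v^i$ (by the explicit construction of degree-$d$ chains of rational curves, exactly as in \cite{buch.m:nbhds}), it must contain each $X(v^i)$, and by the $T$-fixed-point count it contains nothing more. This yields the asserted equality.
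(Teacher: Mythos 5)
Your proposal is essentially correct and follows the same two-inclusion strategy as the paper: the forward inclusion $X(v^1)\cup\cdots\cup X(v^s)\subset\Gamma_d(X(w))$ is obtained by exhibiting a chain of $T$-stable rational curves of degree $\le d$ to each maximal $v^i$, and the reverse inclusion by degenerating an arbitrary curve through $X(w)$ to a $T$-stable chain, for which the paper cites \cite[Lemma 5.3]{mare.mihalcea} (the same fact underlying your ``general principle''). The one place you overcomplicate is the worry that $\Gamma_d(X(w))$ might have a $B$-stable irreducible component that is not a Schubert variety, which prompts your detour through $\Gamma_d(Y(w))\cap\IG$. That detour is not needed (and is somewhat circular, since it ultimately still invokes the structure of $B$-orbits of $\IG$): by Mihai's result, recorded as Proposition \ref{prop:schubert}(b), the $B$-orbits in $\IG$ are exactly the Schubert cells $Y(w)^\circ$ with $w\le w_0$, so $B$ acts on $\IG$ with finitely many orbits whose closures are Schubert varieties, and any closed $B$-stable subvariety of $\IG$, in particular $\Gamma_d(X(w))$, is automatically a union of Schubert varieties; this is precisely what the paper states in the sentence immediately preceding Proposition \ref{thm11}.
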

\begin{proof}
Let $Z_{w,d}=X(v^1) \cup \cdots \cup X(v^s)$. Let $v:= v^i \in Z_{w,d}$ be one of the maximal $T$-fixed points. By the definition of $v^i$'s and the moment graph there exists a chain of $T$-stable rational curves of degree less than or equal to $d$ joining $u \leq w$ to $v$. It follows that $v \in \Gamma_d(X(w))$, thus $X(v) \subset \Gamma_d(X(w))$, whence $Z_{w,d} \subset \Gamma_d(X(w))$.

For the converse inclusion, let $v \in \Gamma_d(X(w))$ be a $T$-fixed point. By \cite[Lemma 5.3]{mare.mihalcea} there exists a $T$-stable curve joining a fixed point $u \in X(w)$ to $v$. This curve corresponds to a path in the moment graph of $\IG(k,2n+1)$, thus $v \in Z_{w,d}$. Since Bruhat order is compatible with inclusion of Schubert varieties, this completes the proof.
\end{proof}

In what follows we will obtain estimates for the dimension of the curve neighborhood $\Gamma_d(X(w))$, using estimates obtained in the even case. We start with the observation that the ``odd" curve neighborhoods are proper subvarieties of the ``even" ones.

\begin{lemma} \label{Lemma:comp} Let $w \in W^P \cap W^{odd}$ and $d \ge 1$ an effective degree. Then there is a strict inclusion 
 $\Gamma_d(X(w)) \subsetneq \Gamma_d(Y(w))$.
\end{lemma}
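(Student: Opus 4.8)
The plan is to deduce the strict inclusion $\Gamma_d(X(w)) \subsetneq \Gamma_d(Y(w))$ from two facts: first, that curve neighborhoods in $\IG$ are always subsets of the corresponding ones in $\IG^{ev}$ (because $\IG \hookrightarrow \IG^{ev}$ is a $B$-equivariant closed embedding which is, moreover, a Schubert variety, so that every degree-$d$ stable map to $\IG$ is in particular a degree-$d$ stable map to $\IG^{ev}$); and second, that the even curve neighborhood $\Gamma_d(Y(w))$ is strictly larger, which one sees by exhibiting a $T$-fixed point in $\Gamma_d(Y(w))$ that does not lie in $\IG$, equivalently a vertex of the moment graph of $\IG^{ev}$ outside $W^{odd}$ that is reachable from some $u \le w$ by a path of degree $\le d$.

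First I would record the inclusion $\Gamma_d(X(w)) \subseteq \Gamma_d(Y(w))$: this is immediate, since any stable map $f:C \to \IG$ representing a point of $GW_d(X(w))$ composes with $\iota$ to give a stable map $C \to \IG^{ev}$ of the same degree $d$ (using that $\iota_*$ is an isomorphism on $H_2$, by the discussion in \S\ref{s:moment}, so the degree is preserved), whose first evaluation still lands in $X(w) = Y(w)$, so $\ev_2$ of it lands in $\Gamma_d(Y(w))$. Then I would invoke Proposition \ref{thm11} and its even-case analogue $\Gamma_d(Y(w)) = Y(w \cdot z_d W_P)$ to reduce the strictness to a purely combinatorial claim about the moment graph of $\IG^{ev}$.

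Next I would prove strictness. Since $d \ge 1$, it suffices to show that $\Gamma_d(Y(w)) \not\subseteq \IG$, i.e. that $w \cdot z_d W_P \notin W^{odd}$, i.e. that $(w \cdot z_d)(j) = \bar 1$ for some $1 \le j \le n+1$. The cleanest way is to start from the identity $u=1$: the vertex $z_d W_P = z_1 W_P \cdot (\text{lower degree})$ contains, in degree $1$ already, the fixed point $s_{2t_1} W_P$, whose minimal length representative is $(2<3<\cdots<k<\bar 1)$ by Corollary \ref{cor:evest}(a) — and this representative has $\bar 1$ among its first $k$ entries, hence is \emph{not} odd-symplectic. (When $k=1$ one uses instead that $\Gamma_1(\mathrm{id}) = \mathbb{P}^{2n+1} = \IG(1,2n+2)$, which visibly contains the line $\langle \e_1, \e_{\bar 1}\rangle$-type point outside $\IG(1,2n+1)=\mathbb{P}^{2n}$.) Since $1 \le w$ always, this point $s_{2t_1}W_P$ (or its image under the Hecke action, but one can just use $u=1$ directly) lies in $\Gamma_d(Y(w))$ for every $d \ge 1$, and it lies outside $\IG$, so it lies outside $\Gamma_d(X(w)) \subseteq \IG$. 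This gives the strict inclusion.

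The main obstacle is essentially bookkeeping rather than a genuine difficulty: one must be careful that the degree-$d$ path in the \emph{even} moment graph producing a non-odd-symplectic vertex really does start from a vertex $\le w$ — but taking the start vertex to be the identity sidesteps this entirely, since $1 \le w$ for every $w$, and the identity is a vertex of both moment graphs. One should also double-check the edge cases $k=1$ and $k=n+1$ separately (for $k=n+1$ one has $\IG(n+1,2n+1) = X_c$, still a proper Schubert subvariety of $\IG(n+1,2n+2)$, and the same argument with $s_{2t_1}$ applies), but these are routine once the main argument is in place.
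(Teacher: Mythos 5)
Your proof takes essentially the same route as the paper's: both exhibit the $T$-fixed point with minimal length representative $(2<3<\cdots<k<\bar{1})$, i.e.\ $s_{2t_1}W_P=z_1W_P$, as an element of $\Gamma_d(Y(w))$ reached by a $T$-stable line from the identity coset (which lies in every $X(w)$), while noting that this point lies outside $\IG$ and hence outside $\Gamma_d(X(w)) \subseteq \IG$. The paper's version is much more terse — it does not spell out the inclusion $\Gamma_d(X(w)) \subseteq \Gamma_d(Y(w))$ or the $k=1$ and $k=n+1$ edge cases — but the underlying argument is identical.
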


\begin{proof} Consider the identity $1.P \in \Gamma_d(X(w))$. There is a $T$-stable degree $1$ curve (i.e. a line) in $\IG(k, 2n+2)$ that contains the $T$-fixed points $1.P $ and $(2<3<\cdots<\bar{1}) \in \IG(k,2n+2) \setminus \IG(k,2n+1)$. \end{proof}

The next result is the key technical requirement needed for the vanishing of certain Chevalley GW invariants.

\begin{thm} \label{Lem10}
Let $w \in W^P \cap W^{odd}$. Then the following inequalities hold:
\begin{eqnarray*}
\dim \Gamma_1(X(w))-\dim X(w) &\leq& \deg q-1\\
\dim \Gamma_d(X(w))-\dim X(w) &<& d \deg q-1 \mbox{ for all } d \geq 2
\end{eqnarray*}
Further, if the Schubert variety $X(w)$ is not contained in the closed orbit $X_c$ of $\IG$ then \[ \dim \Gamma_1(X(w))-\dim X(w) <\deg q-1 \/. \]
\end{thm}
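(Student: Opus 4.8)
The plan is to reduce everything to the even-symplectic case, where Proposition~\ref{ggg} and Corollary~\ref{cor:evest} give sharp dimension estimates, and then to exploit the strict inclusion $\Gamma_d(X(w)) \subsetneq \Gamma_d(Y(w))$ of Lemma~\ref{Lemma:comp}. First I would handle $d=1$. By Proposition~\ref{ggg}(2) and Corollary~\ref{cor:evest}(b), $\dim \Gamma_1(Y(w)) = \ell(w) + \ell(z_1 W_P) \le \ell(w) + \deg q_{2n+2} - 1$. Since $\IG(k,2n+1)$ has codimension $k$ in $\IG(k,2n+2)$ and $\deg q = 2n+2-k$ while $\deg q_{2n+2} = 2n+3-k$, i.e.\ $\deg q_{2n+2} = \deg q + 1$, one gets $\dim \Gamma_1(X(w)) \le \dim \Gamma_1(Y(w)) = \ell(w) + \deg q_{2n+2} - 1 = \ell(w) + \deg q$. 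This is one unit too large, so the strict inclusion of Lemma~\ref{Lemma:comp} is exactly what saves a dimension: $\Gamma_1(X(w))$ is a proper Borel-stable subvariety of the irreducible(?) --- actually $\Gamma_1(Y(w))$ is a Schubert variety hence irreducible --- Schubert variety $\Gamma_1(Y(w))$, so $\dim \Gamma_1(X(w)) < \dim \Gamma_1(Y(w)) \le \ell(w) + \deg q$, giving $\dim \Gamma_1(X(w)) - \dim X(w) \le \deg q - 1$. That settles the first inequality.

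For $d \ge 2$ the argument is similar but uses the stronger Corollary~\ref{cor:evest}(b): the equality $\ell(z_d W_P) = d \deg q_{2n+2} - 1$ holds \emph{only} for $d=1$, so for $d \ge 2$ we have $\ell(z_d W_P) \le d \deg q_{2n+2} - 2$. Combined with Proposition~\ref{ggg}(2) and $\Gamma_d(X(w)) \subsetneq \Gamma_d(Y(w))$ (strict, hence losing one more dimension), we get
\[
\dim \Gamma_d(X(w)) \le \dim \Gamma_d(Y(w)) - 1 \le \ell(w) + d\deg q_{2n+2} - 3 = \ell(w) + d(\deg q + 1) - 3 = \ell(w) + d\deg q + d - 3,
\]
which for $d = 2$ gives $\dim \Gamma_2(X(w)) - \dim X(w) \le 2\deg q - 1$, not yet strict. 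So for $d=2$ I would need a sharper input: I would invoke Corollary~\ref{cor:evest}(c), which gives $\ell(z_2 W_P) = 2\deg q_{2n+2} - 3$ when $k>1$ (and (d) when $k=1$, where it is even smaller), improving the even-side bound to $\dim \Gamma_2(Y(w)) \le \ell(w) + 2\deg q_{2n+2} - 3 = \ell(w) + 2\deg q - 1$, and then the strict inclusion yields $\dim \Gamma_2(X(w)) - \dim X(w) \le 2\deg q - 2 < 2\deg q - 1$. For general $d \ge 3$ I would proceed by induction on $d$ using the recursion $z_d W_P = s_\alpha \cdot z_{d-\alpha^\vee} W_P$ of Proposition~\ref{ggg}(1), or more simply observe that $\ell(z_d W_P)$ grows strictly slower than $d \deg q_{2n+2} - 1$ for $d \ge 2$ by the same degree-of-cosmall-roots argument used in the proof of Corollary~\ref{cor:evest}, so the gap only widens; the strict inclusion of Lemma~\ref{Lemma:comp} then always provides the final strict inequality.

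For the last assertion --- that $X(w) \not\subset X_c$ forces $\dim \Gamma_1(X(w)) - \dim X(w) < \deg q - 1$ --- I would appeal directly to Lemma~\ref{Lem11}. That lemma says $\dim \Gamma_1(Y(w)) = \ell(w) + \deg q_{2n+2} - 1$ \emph{if and only if} $Y(w) \subset X_c$; so if $X(w) \not\subset X_c$ then $\dim \Gamma_1(Y(w)) \le \ell(w) + \deg q_{2n+2} - 2 = \ell(w) + \deg q - 1$, and applying the strict inclusion $\Gamma_1(X(w)) \subsetneq \Gamma_1(Y(w))$ gives $\dim \Gamma_1(X(w)) \le \ell(w) + \deg q - 2$, i.e.\ the claimed strict inequality.

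The main obstacle I anticipate is making the ``strict inclusion loses exactly one dimension'' step rigorous. The inclusion $\Gamma_d(X(w)) \subsetneq \Gamma_d(Y(w))$ is strict as sets, and $\Gamma_d(Y(w))$ is a Schubert variety hence irreducible, so any proper closed subvariety has strictly smaller dimension --- but one must be slightly careful that $\Gamma_d(X(w))$ really is \emph{closed} in $\Gamma_d(Y(w))$ (it is, being a finite union of Schubert varieties by the discussion preceding Proposition~\ref{thm11}, and $B$-stable). Once that is nailed down the rest is bookkeeping with the numerical identity $\deg q_{2n+2} = \deg q + 1$ from Lemma~\ref{lemma:degq}. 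A secondary point requiring care is the $k=1$ case, where $\IG(1,2n+1) = \mathbb{P}^{2n}$; there one should check the statement reduces to the (trivial) Chevalley/curve-neighborhood facts for projective space, which Corollary~\ref{cor:evest}(d) already packages.
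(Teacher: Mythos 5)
Your treatment of $d=1$, $d=2$, and the final claim (when $X(w) \not\subset X_c$) is essentially identical to the paper's: compare $\Gamma_d(X(w))$ with $\Gamma_d(Y(w))$, use that $\Gamma_d(Y(w))$ is irreducible so the strict inclusion of Lemma~\ref{Lemma:comp} drops the dimension by at least one, and apply Proposition~\ref{ggg}, Corollary~\ref{cor:evest}, and Lemma~\ref{Lem11}. That part is fine.

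The gap is in your treatment of $d \geq 3$. Your argument requires $\ell(z_d W_P) - 1 < d \deg q - 1$, i.e.\ $\ell(z_d W_P) \leq d\deg q_{2n+2} - d - 1$ --- a deficit from the maximal value $d\deg q_{2n+2} - 1$ that grows at least linearly in $d$. But the only input you cite, Corollary~\ref{cor:evest}(b), gives a deficit of just $1$: for $d \geq 2$ it yields $\ell(z_d W_P) \leq d\deg q_{2n+2} - 2$, and your own computation then gives $\dim \Gamma_d(X(w)) - \dim X(w) \leq d\deg q + d - 3$, which for $d \geq 2$ is \emph{not} less than $d\deg q - 1$. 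For $d=2$ you correctly patch this by invoking the explicit value $\ell(z_2 W_P) = 2\deg q_{2n+2}-3$ from Corollary~\ref{cor:evest}(c), but for $d \geq 3$ your two suggested continuations (induct on the recursion of Proposition~\ref{ggg}(1), or assert ``the gap only widens'') are not worked out, and neither follows from the cited facts --- ``grows strictly slower than $d\deg q_{2n+2}-1$'' only gives deficit $\geq 1$, not the deficit $\geq d+1$ you need. The Hecke inequality $\ell(s_\alpha \cdot z_{d-\alpha^\vee}W_P) \leq \ell(s_\alpha) + \ell(z_{d-\alpha^\vee}W_P)$ alone does not force the deficit to accumulate, so the induction is not automatic.

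The paper avoids this entirely by a different mechanism for general $d$: by Proposition~\ref{thm11}, any $T$-fixed point $v^i$ in $\Gamma_d(X(w))$ is reached from some $u^i \leq w$ by a path of $j$ edges in the moment graph of $\IG(k, 2n+1)$, and every such edge has degree $1$ or $2$ (see \S\ref{s:moment}). Applying the already-proved $d=1$ and $d=2$ estimates edge by edge yields $\dim X(v^i) - \dim X(u^i) \leq \sum_{i=1}^j(d_i\deg q - 1) \leq d\deg q - j$; if $j \geq 2$ this is $< d\deg q - 1$, and $j=1$ forces $d \in \{1,2\}$, already handled. This decomposition into small-degree steps inside $\IG(k,2n+1)$ itself, rather than a refined analysis of $z_d$ in the ambient $\IG(k, 2n+2)$, is the missing ingredient; without it (or an equivalent sharp bound on $\ell(z_d W_P)$ for all $d \geq 3$) your argument does not close.
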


\begin{proof} Recall that $\deg q_{2n+2}=\deg q +1$. If $d=1$, by Lemma \ref{Lemma:comp} and Proposition \ref{ggg} \begin{eqnarray*}
&&\dim \Gamma_1(X(w))+1-\dim X(w) \leq \dim \Gamma_1(Y(w))-\dim Y(w) \leq \deg q_{2n+2}-1 \end{eqnarray*} thus $\dim \Gamma_1(X(w))-\dim X(w) \leq \deg q_{2n+2}-2=\deg q-1$. Let now $d=2$. If $k >1$ then by Lemma \ref{Lemma:comp} and Corollary \ref{cor:evest} we obtain
\[ \dim \Gamma_2(X(w)) - \dim X(w) \leq \dim \Gamma_2(Y(w)) -1 - \ell(w) \leq \ell(z_2 W_P) -1 \leq 2 \deg q_{2n+2} -4 < 2 \deg q -1 \/. \]
For arbitrary $d$, let $\Gamma_d(X(w))=X(v^1) \cup  \cdots \cup X(v^s)$. Then each $v^i$ is joined to some $u^i \leq w$ in the moment graph of $\IG(k,2n+1)$ by $j$ edges of degrees $d_i \in \{ 1 ,2 \}$, where $\sum_{i=1}^j{d_i} \leq d$. By applying repeatedly the estimates for $d=1,2$ we have \[\dim X(v^i) - \dim X(w) \leq \dim X(v^i)-\dim X(u^i) \leq \sum_{i=1}^j (d_i \deg q-1) \leq d \deg q-j. \] If $j \geq 2$ then the result holds, and if $j=1$ then necessarily $d \in \{ 1, 2 \}$, a case treated before. 
This proves the first two inequalities. For the last inequality, we notice that the hypothesis implies that $w$ is determined by a sequence $(w(1)<\cdots<w(k))$ such that $w(1)>1$. Then by Lemma \ref{Lem11} combined with Proposition \ref{ggg} we obtain \[ \begin{split} \dim \Gamma_1(X(w)) - \dim X(w) \leq \dim \Gamma_1(Y(w)) - 1 - \ell(w) <  \deg q_{2n+2} -2 = \deg q -1 \/. \end{split} \] This finishes the proof. \end{proof}

\section{Vanishing of Chevalley Gromov-Witten invariants}\label{s:vanishing} The main result of this section is the following.

\begin{thm}\label{thm:vanishing} Let $d\geq 1$ be a degree in $H_2(\IG(k, 2n+1))$. Let $X(v), X (w) \subset \IG(k, 2n+1)$ be two Schubert varieties and $X(Div)$ the Schubert divisor. If $\dim \Gamma_d(X(v)) < \ell(v) + d \deg q -1 $ then the equivariant GW invariant 
\[ \langle [X(Div)]_T, [X(v)]_T, [X(w)]_T^\vee \rangle_d = 0 \/. \] In particular, the equivariant Gromov-Witten invariant above vanishes if either $d\geq 2$ or if $d=1$ and $X(v)$ is not included in the closed orbit $X_c \subset \IG(k, 2n+1)$. \end{thm}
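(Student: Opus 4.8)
The plan is to deduce the vanishing of the equivariant Chevalley GW invariant $\langle [X(Div)]_T, [X(v)]_T, [X(w)]_T^\vee \rangle_d$ from the dimension estimate $\dim \Gamma_d(X(v)) < \ell(v) + d\deg q - 1$ via a ``moving lemma'' argument adapted to the $T$-equivariant setting. First, I would recall the divisor axiom (\ref{E:divax}): since $[X(Div)]_T$ is a divisor class and $[X(Div)] \cap d = d$ (using Lemma \ref{lemma:degq} together with the fact that $\deg q = 2n+2-k$ is the degree of a line paired against the divisor), we get $\langle [X(Div)]_T, [X(v)]_T, [X(w)]_T^\vee \rangle_d = d \cdot \langle [X(v)]_T, [X(w)]_T^\vee \rangle_d$. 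So it suffices to show the two-point invariant $\langle [X(v)]_T, [X(w)]_T^\vee \rangle_d$ vanishes.

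Next, I would express this two-point invariant as a push-forward supported on the Gromov--Witten variety. By the projection formula, $\langle [X(v)]_T, [X(w)]_T^\vee \rangle_d = \int^T_{\IG} (\ev_2)_* \big( \ev_1^*[X(v)]_T \cap [\Mb_{0,2}(\IG,d)]^{vir}_T \big) \cup [X(w)]_T^\vee$. The class $(\ev_2)_* ( \ev_1^*[X(v)]_T \cap [\Mb_{0,2}(\IG,d)]^{vir}_T )$ is an equivariant class whose (non-equivariant) support is contained in the curve neighborhood $\Gamma_d(X(v)) = \ev_2(\ev_1^{-1} X(v))$. The hypothesis says this support has dimension strictly less than $\ell(v) + d\deg q - 1 = \dim X(v) + d\deg q - 1$, which is precisely the ``expected'' homological degree of this push-forward class (the virtual dimension of $\Mb_{0,2}(\IG,d)$ is $\dim \IG + d\deg q - 1$, and $\ev_1^*[X(v)]_T$ drops it by $\mathrm{codim}\, X(v)$, giving expected dimension $\ell(v) + d\deg q - 1$). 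Since the actual support has strictly smaller dimension, the degree-$(\ell(v)+d\deg q-1)$ part of the push-forward class must vanish in $H^T_*$. I would make this rigorous using the standard fact (e.g. from equivariant intersection theory / localization, or from the argument in \cite{buch.m:nbhds}) that an equivariant homology class supported on a closed subvariety of dimension $< m$ has vanishing component in homological degree $m$; pairing against $[X(w)]_T^\vee$ then forces the invariant to be zero regardless of $w$. The key subtlety here is that in the odd-symplectic case the virtual class is \emph{not} the fundamental class, but for the degrees under consideration we only need the dimension (equivalently, the degree in the Chow/homology grading) of the virtual class, which equals the expected dimension by construction; the estimate on $\dim \Gamma_d(X(v))$ together with properness of $\ev_2$ then controls where the push-forward lives.

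Finally, for the ``in particular'' clause, I would invoke Theorem \ref{Lem10}: for $d \geq 2$ we have $\dim \Gamma_d(X(w)) - \dim X(w) < d\deg q - 1$ unconditionally (second inequality of Theorem \ref{Lem10}, applied with $X(v)$ in place of $X(w)$), and for $d = 1$ with $X(v) \not\subset X_c$ we have $\dim \Gamma_1(X(v)) - \dim X(v) < \deg q - 1$ (third inequality of Theorem \ref{Lem10}). In both cases the hypothesis $\dim \Gamma_d(X(v)) < \ell(v) + d\deg q - 1$ of the theorem is satisfied, so the vanishing follows.

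The main obstacle I anticipate is making the support-versus-dimension argument fully precise in the equivariant setting: one must be careful that the non-equivariant support of the pushed-forward virtual class is indeed contained in $\Gamma_d(X(v))$ (which holds because $\ev_1^{-1}(X(v))$ maps into $\Gamma_d(X(v))$ under $\ev_2$ and $\ev_2$ is proper, so the image of the virtual class is supported there), and that a class supported in dimension $< m$ cannot pair nontrivially against a class of complementary codimension. This is where one genuinely uses that $\ev_1, \ev_2$ are proper $T$-equivariant morphisms and that equivariant homology is computed by $T$-invariant cycles; the homogeneous case in \cite{buch.m:nbhds} handles an analogous step, and the only new input needed is the dimension bound of Theorem \ref{Lem10}, which has already been established.
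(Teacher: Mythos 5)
Your proposal is correct and follows essentially the same path as the paper's proof: divisor axiom to reduce to a two-point invariant, projection formula to express it as a push-forward supported on the curve neighborhood $\Gamma_d(X(v))$, and the dimension estimate to conclude that the push-forward has no component in the relevant homological degree, with the ``in particular'' clause obtained from Theorem~\ref{Lem10}. The extra care you take about the virtual class and the support argument is in the spirit of what the paper leaves implicit, and does not constitute a different method.
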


\begin{proof} By the divisor axiom \[ \langle [X(Div)]_T, [X(v)]_T, [X(w)]_T^\vee \rangle_d = d \langle [X(v)]_T, [X(w)]_T^\vee \rangle_d \/. \] By definition, \[\begin{split} \langle [X(v)]_T, [X(w)]_T^\vee \rangle_d = \int_{[\Mb_{0,2}(\IG(k, 2n+1),d)^{vir}]_T}^T \ev_1^* [X(v)]_T \cup \ev_2^*[X(w)]_T^\vee \\ = \int_{\IG(k, 2n+1)}^T [X(w)]_T^\vee \cap (\ev_2)_* ( \ev_1^*[X(v)]_T \cap [\Mb_{0,2}(\IG(k, 2n+1),d)^{vir}]_T) \/. \end{split} \]
The cycle $(\ev_2) ( \ev_1^{-1} [X(v)])$ is supported on the curve neighborhood $\Gamma_d(X(v))$, and the push-forward $(\ev_2)_* ( \ev_1^*[X(v)]_T \cap [\Mb_{0,2}(\IG(k, 2n+1),d)^{vir}]_T)$ is non-zero only if the curve neighborhood has components of dimension \[ \textrm{expdim}~ \Mb_{0,2}(\IG(k, 2n+1)) - \textrm{codim}~ X(v) = \deg q^d -1 + \ell(v) \/. \] However, the hypothesis implies that $\dim \Gamma_d(X(v))$ is strictly less than this quantity. The last statement follows from Theorem \ref{Lem10}. \end{proof}

\section{Lines in $\IG(k,2n+1)$}\label{s:lines} As before, we set $\IG:= \IG(k, 2n+1)$. If $k \neq n+1$ then $\Sp_{2n+1}$ acts with two orbits: $X^o$ (the open orbit) and $X_c$ (the closed orbit). If $k= n+1$ the space $\IG$ is homogeneous under $\Sp_{2n+1}$, and $\IG = X_c$ is isomorphic to the Lagrangian Grassmannian $\IG(n, 2n)$. All statements remain true in this case after making $X^o = \emptyset$, with almost identical proofs.

 According to Theorem \ref{thm:vanishing}, the only equivariant GW invariants $\langle [X(Div)]_T, [X(v)]_T, [X(w)]_T^\vee \rangle_d$ which maybe non-zero are those when $d=1$ and the Schubert variety $X(v)$ is included in the closed orbit $X_c \simeq \IG(k-1, 2n)$. To calculate these invariants, we will analyze the geometry of the moduli spaces of stable maps $\Mb_{0,r} (\IG,1) \to \IG$ where $r=1,2$, and the geometry of the {\em Gromov-Witten varieties} \[ GW_1(w):= \ev_1^{-1} (X(w)) \subset \Mb_{0,2}(\IG, 1) \/. \] For $X(w) \subset X_c$, we will show that $GW_1(w)$ is a scheme which has $2$ irreducible, generically reduced, components. One component parametrizes lines in $\IG$ contained in the closed orbit $X_c$, and the other those lines which intersect the open orbit $X^\circ$. The restriction of the evaluation maps to each of these components will be a surjective map, which is either birational, or it has general fiber of positive dimension. We will deduce from this that the curve neighborhood $\Gamma_1(X(w))$ has two components, and that if non-zero, the GW invariant is equal to $1$ precisely in the cases when $[X(w)]^\vee$ is Poincar{\'e} dual to one of these components. 

From now on, a {\em line} in $X$ will mean an irreducible, reduced, curve of degree $1$. Recall that there is a sequence of embeddings \[ \IG(k, 2n+1) \subset \IG(k, 2n+2) \subset \Gr(k, 2n+2) \subset \mathbb{P}( \bigwedge^{k} \mathbb{C}^{2n+2}) \] where the last is the Pl{\"u}cker embedding. The image of a line in $\IG$ under the composition of these embeddings is a projective line. Indeed, a calculation in coordinates shows that the image of the Schubert curve in $\IG$ is the Schubert curve in $\Gr(k, 2n+2)$, and the image of this Schubert curve is a projective line. 

Let $r=1,2,3$. We recall from Prop. \ref{prop:vfund} that $\Mb_{0,r}(\IG, 1)$ is a non-singular, irreducible scheme of dimension \[ \dim \Mb_{0,r}(\IG, 1)= \dim \IG + \deg q + r-3 = k(2n+1-k) - \frac{k(k-1)}{2} + 2n+2 - k + r-3 \/. \] 
 There is a natural isomorphism $\Mb_{0,1}(\Gr(k, 2n+1),1) \simeq \mathrm{Fl}(k-1,k,k+1; 2n+1)$ (a $3$-step flag variety) such that the evaluation map $\ev_1$ is the projection $\pi_k:\Fl(k-1,k,k+1;2n+1) \to \Gr(k, 2n+1)$. To see the isomorphism explicitly, one can use e.g.~the kernel-span technique of Buch \cite{buch} to observe that to any line $L \subset \Gr$ one can associate its kernel $K:= \bigcap_{V \in L} V$ and its span $S:= \Span\{ V: V \in L \}$, which have dimension $k-1$, respectively $k+1$. Then the pointed line $(p \in L)$ is sent to $(\ker L, p, \Span L)$. Although logically not needed in what follows, we remark that one can identify $\Mb_{0,1}(\IG,1)$ to a subvariety of the three-step flag variety, by noticing that if $V_{k-1} \in \IG(k-1, 2n+1)$ is a kernel of a line, then a triple $(V_{k-1} \subset V_k \subset V_{k+1})$ corresponds to a line in $\IG$ if and only if $V_{k-1}$ is isotropic and $ V_{k+1} \subset V_{k-1}^\perp$. Therefore $\Mb_{0,1}(\IG,1)$ can be identified set theoretically with \[ \Mb_{0,1}(\IG,1) \simeq \{ (V_{k-1} \subset V_k \subset V_{k+1}): V_{k-1} \in \IG(k-1, 2n+1), V_{k+1} \subset V_{k-1}^\perp \} .\] 
Under this identification, $\ev_1$ corresponds to the projection to the component $V_k$. 
\subsection{Lines intersecting the open orbit $X^\circ$} Consider the open subvariety $\mathcal{M}^\circ \subset \Mb_{0,1}(\IG,1)$ parametrizing $1$-pointed lines intersecting the open orbit $X^\circ \subset \IG$: \[ \mathcal{M}^\circ := \{ (p, L): L \cap X^\circ \neq \emptyset \} \/. \] Since the kernel of a line $L$ intersecting $X^\circ$ cannot contain $\e_1$ (which spans the kernel of the odd symplectic form), the variety $\mathcal{M}^\circ$ can be realized as the flag bundle $\mathcal{F}\ell(1,2; \mathcal{S}_{k-1}^\perp/ \mathcal{S}_{k-1})$ over the open orbit $\IG(k-1,2n+1)^\circ$, where $\mathcal{S}_{k-1}$ denotes the tautological subbundle. In this case $rank(\mathcal{S}_{k-1}^\perp) = 2n+1 - (k-1)$. 
%{\color{blue} (As a ``reality check" we calculate the dimension of $\mathcal{M}^\circ$, using the flag bundle interpretation:
%\[ \begin{split} \dim \mathcal{M}^\circ = \dim \IG(k-1, 2n+1) + \dim \Fl(1,2; 2n+1 - 2(k-1))  = \\ (k-1)(2n+2 - k) - \frac{(k-1)(k-2)}{2}+ 4n-4k+3 = \dim \Mb_{0,1}(\IG,1) \/. \end{split} \] }
Let $\pi: \mathcal{M}^\circ \to X$ denote the natural projection map. Key to the calculation of the GW invariants is the following result, analyzing the geometry of the fibres of $\pi$. 

\begin{thm}\label{thm:pigeom} 
(a) The natural projection map $\pi: \mathcal{M}^\circ \to \IG(k, 2n+1)$ is surjective, and all its fibers are irreducible, generically smooth, of dimension $\dim \mathcal{M}^\circ - \dim \IG(k, 2n+1)$.

(b) The inverse image $\pi^{-1}(X_c)$ is isomorphic to an $\Sp_{2n+1}$ orbit in $\IF(k-1, k, k+1; 2n+1)$. In particular, it is smooth and irreducible.\end{thm}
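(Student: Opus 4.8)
The plan is to analyze the projection $\pi\colon \mathcal{M}^\circ \to \IG(k,2n+1)$ using the explicit description of $\mathcal{M}^\circ$ as a flag bundle and to reduce everything to a transitivity statement for the $\Sp_{2n+1}$-action. First I would set up coordinates: a point of $\mathcal{M}^\circ$ is a flag $(V_{k-1}\subset V_k\subset V_{k+1})$ with $V_{k-1}\in \IG(k-1,2n+1)^\circ$ (so $\e_1\notin V_{k-1}$, since the kernel of a line meeting $X^\circ$ cannot contain $\e_1$) and $V_{k+1}\subset V_{k-1}^\perp$; the map $\pi$ records $V_k$. Over a fixed $V_k\in\IG$ the fiber consists of all $(V_{k-1},V_{k+1})$ with $V_{k-1}\subset V_k\subset V_{k+1}\subset V_{k-1}^\perp$, $\e_1\notin V_{k-1}$, and $V_k$ isotropic. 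Once $V_{k-1}$ is chosen inside $V_k$ (a codimension-one subspace, one linear condition away from containing $\e_1$, so an open subset of $\mathbb{P}^{k-1}$ or of $\Gr(k-1,k)$), the choices of $V_{k+1}$ are the lines in $V_{k-1}^\perp/V_k$, which is an open subset of $\mathbb{P}(V_{k-1}^\perp/V_k)$. So the fiber fibers over an open subvariety of a projective space with fibers open in another projective space; this shows each nonempty fiber is irreducible, generically smooth, and of the expected dimension $\dim\mathcal{M}^\circ-\dim\IG$. Surjectivity of $\pi$ is immediate since for any $V_k\in\IG$ one can always find $V_{k-1}\subset V_k$ avoiding $\e_1$ (as $k\le n+1$ forces $\dim V_k\ge 1$ and $V_k$ has a hyperplane missing any fixed vector unless $\e_1$ spans $V_k$, the case $k=1$ being handled separately or trivially). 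This proves part (a).

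For part (b), I would identify $\pi^{-1}(X_c)$ set-theoretically: it is the set of flags $(V_{k-1}\subset V_k\subset V_{k+1})$ with $\e_1\in V_k$, $\e_1\notin V_{k-1}$, $V_k$ isotropic, and $V_{k+1}\subset V_{k-1}^\perp$. The key observation is that $V_{k+1}=V_{k-1}^\perp\cap(\text{something})$ is too much to hope for in general, but in fact the conditions on $V_{k+1}$ are automatically determined once we note $\dim V_{k-1}^\perp/V_k$ and track the form: since $V_{k-1}$ is isotropic of dimension $k-1$ and $V_k=V_{k-1}\oplus\langle\e_1\rangle$ with $\e_1$ in the kernel of $\omega$, one computes $\dim V_{k-1}^\perp = 2n+1-(k-1) = 2n-k+2$, and $V_{k+1}$ is a hyperplane-or-less inside this; I expect the relevant $\Sp_{2n+1}$-orbit to be exactly $\{(V_{k-1}\subset V_k\subset V_{k+1}): \e_1\in V_k,\ \e_1\notin V_{k-1},\ V_{k+1}\subset V_{k-1}^\perp,\ V_k\text{ isotropic}\}$, so the work is to show $\Sp_{2n+1}$ acts transitively on this locus. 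Using the matrix form of $\Sp_{2n+1}(E)$ from the Preliminaries (a $\lambda\in\mathbb{C}^*$, $a\in\mathbb{C}^{2n}$, and $S\in\Sp_{2n}(F)$), one can first use $S\in\Sp_{2n}(F)$ to move the image of $(V_{k-1}\subset V_k\subset V_{k+1})$ in $E/\langle\e_1\rangle\cong$ (a symplectic-type quotient) to a standard position — here I would invoke the known transitivity of $\Sp_{2n}(F)$ on appropriate isotropic flags in $F$ — and then use the translation part $a$ to normalize the way $\e_1$ enters $V_k$. This realizes $\pi^{-1}(X_c)$ as a single orbit, hence smooth and irreducible.

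The main obstacle will be the precise orbit identification in part (b): one must verify that the naive incidence locus is genuinely a single $\Sp_{2n+1}$-orbit and not a union of orbits or a non-reduced scheme, and in particular that the condition $V_{k+1}\subset V_{k-1}^\perp$ interacts correctly with $\e_1\in V_k$ — for instance one should check that $V_{k+1}$ is forced to meet $\langle\e_1\rangle^\perp = E$ trivially-or-not in a way that is orbit-constant, and rule out the degenerate stratum where $V_{k+1}$ might fail to be transverse to $\ker\omega$. I would handle this by a careful dimension count comparing $\dim\pi^{-1}(X_c)$ (computable from part (a): it is $\dim X_c + (\dim\mathcal{M}^\circ-\dim\IG)$) with the dimension of the candidate $\Sp_{2n+1}$-orbit, computed as $\dim\Sp_{2n+1}$ minus the dimension of a stabilizer of a standard flag; matching these forces the orbit to be open and closed in the incidence locus, and connectedness of the latter (which follows from part (a) applied to the irreducible $X_c$) then gives equality. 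The smoothness and irreducibility of $\pi^{-1}(X_c)$ are then automatic, being properties of any $G$-orbit.
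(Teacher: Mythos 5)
Your overall strategy for part (a) is a genuinely different — and in some ways cleaner — route than the paper's. The paper reduces to a single fiber using the local-triviality result of \cite{BCMP:qkfin} over each $\Sp_{2n+1}$-orbit, identifies $\pi^{-1}(X_c)_{red}$ with the orbit $K_2$ via a bijective morphism (which requires Lemma~\ref{iso}), and then establishes generic smoothness by an explicit affine coordinate chart (cf.\ Example~\ref{ex:coords}). You instead fiber $\pi^{-1}(V_k)$ over the open set $U=\{V_{k-1}\subset V_k:\e_1\notin V_{k-1}\}\subset\Gr(k-1,V_k)$, with fiber $\mathbb{P}(V_{k-1}^\perp/V_k)$; since $\dim V_{k-1}^\perp=2n+2-k$ is constant on $U$, this is a genuine $\mathbb{P}^{2n+1-2k}$-bundle, yielding irreducibility and (even everywhere, not just generic) smoothness of the reduced fiber, plus the dimension count $(k-1)+(2n+1-2k)=2n-k=\dim\mathcal{M}^\circ-\dim\IG$. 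Two corrections: there is no extra open condition on $V_{k+1}$ — once $V_{k-1}$ avoids $\e_1$ and $V_{k+1}\subset V_{k-1}^\perp$, the resulting line automatically lies in $\IG$ and meets $X^\circ$, so you get the \emph{whole} $\mathbb{P}(V_{k-1}^\perp/V_k)$, not an open subset — and you should say a word about why this set-theoretic picture gives the scheme-theoretic fiber. The clean way is to view $\mathcal{M}^\circ$ as the $\mathbb{P}(\mathcal{S}_{k-1}^\perp/\mathcal{S}_k)$-bundle over $\mathcal{M}^{\circ,0}:=\{(V_{k-1},V_k):\e_1\notin V_{k-1}\}$, which is itself an open subscheme of the $\mathbb{P}^{k-1}$-bundle $\Gr(k-1,\mathcal{S}_k)$ over $\IG$; then $\pi$ is a composition of projective bundle maps and everything is manifestly scheme-theoretic.

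For part (b), your identification of $\pi^{-1}(X_c)$ with the locus $\{\e_1\in V_k,\ \e_1\notin V_{k-1},\ V_{k+1}\subset V_{k-1}^\perp\}$ is correct, but two points deserve to be made explicit. First, to even view $\pi^{-1}(X_c)$ inside the \emph{isotropic} flag variety $\IF(k-1,k,k+1;2n+1)$ — which is what "$\Sp_{2n+1}$-orbit" requires — you must check that $V_{k+1}$ is isotropic; this is exactly the paper's Lemma~\ref{iso}, and it follows from the one-line computation you implicitly do: with $\e_1\in V_k$, $V_{k+1}=V_{k-1}\oplus\langle\e_1\rangle\oplus\langle u\rangle$ with $u\in V_{k-1}^\perp$, and since $\e_1\in\ker\omega$, all pairings in $V_{k+1}$ vanish. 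Second, you do not need to re-prove transitivity from the matrix form: the locus in question is precisely the orbit $K_2$ in Mihai's classification (the paper's Proposition~\ref{eee} applied to $\IF(k-1,k,k+1;2n+1)$), so the paper simply cites that fact, which is shorter than the direct argument you propose. Your dimension-count fallback (that the orbit $K_2$ has the same dimension as $\pi^{-1}(X_c)$, which is irreducible by part (a), forcing equality) is a sound alternative but strictly more work than quoting Mihai. Net: no fatal gap, a genuinely more direct take on (a), and (b) can be streamlined.
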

Before proving the theorem, we recall the description of the $Sp_{2n+1}$-orbits of the odd-symplectic 3-step partial flag variety $\IF(k-1,k,k+1;2n+1)$:
\begin{eqnarray*}
K_1&=&\{V_{k-1} \subset V_k \subset V_{k+1} \in \IF(k-1,k,k+1;2n+1):{\bf e}_1 \in V_{k-1} \}\\
K_2&=&\{V_{k-1} \subset V_k \subset V_{k+1} \in \IF(k-1,k,k+1;2n+1):{\bf e}_1 \in V_{k}, {\bf e}_1 \notin V_{k-1} \}\\
K_3&=&\{V_{k-1} \subset V_k \subset V_{k+1} \in \IF(k-1,k,k+1;2n+1):{\bf e}_1 \in V_{k+1}, {\bf e}_1 \notin V_{k} \}\\
K_4&=&\{V_{k-1} \subset V_k \subset V_{k+1} \in \IF(k-1,k,k+1;2n+1): {\bf e}_1 \notin V_{k+1} \}.
\end{eqnarray*}
We also need the following lemma:
\begin{lemma} \label{iso} Let $L$ be a line such that $L \cap X_c \neq \emptyset$ and $L \cap X^{\circ} \neq \emptyset$. Then $\mathrm{Span}~L$ is an isotropic subspace in $\C^{2n+1}$. 
\end{lemma}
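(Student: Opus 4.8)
The plan is to argue pointwise on the line $L$, using that $L$, being a line in $\IG(k,2n+1)$, has a well-defined kernel $K = \bigcap_{V \in L} V$ of dimension $k-1$ and span $S = \Span L$ of dimension $k+1$, with every $V \in L$ satisfying $K \subset V \subset S$. To show $S$ is $\omega$-isotropic it suffices to show $\omega(u,v) = 0$ for all $u, v \in S$; since $S$ is spanned by the spaces $V \in L$, and each such $V$ is isotropic, the only thing to check is that $\omega(V, V') = 0$ for two (hence any two) distinct members $V, V' \in L$. Equivalently, it suffices to show $V' \subset V^\perp$ for one $V \in L$, because then $S = V + V' \subset V + V^\perp$ and isotropy of $S$ follows from isotropy of $V$ together with $V' \subset V^\perp$.

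First I would pick the point of $L$ lying in the closed orbit: let $V_c \in L \cap X_c$, so $\e_1 \in V_c$, and pick any other point $V^\circ \in L \cap X^\circ$, so $\e_1 \notin V^\circ$. Since $K \subset V^\circ$ and $\e_1 \notin V^\circ$, we get $\e_1 \notin K$; combined with $\e_1 \in V_c$ and $K \subset V_c$ this forces $V_c = K \oplus \C\e_1$ as a $k$-dimensional space. Now the key computation: because $\ker\omega = \C\e_1$, for any $w \in \C^{2n+1}$ we have $\omega(\e_1, w) = 0$, i.e. $\e_1 \in w^\perp$ automatically, so $V_c^\perp = (K \oplus \C\e_1)^\perp = K^\perp$. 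On the other hand, every $V \in L$ is isotropic and contains $K$, hence $V \subset K^\perp$. Therefore every $V \in L$ satisfies $V \subset K^\perp = V_c^\perp$, so in particular $V^\circ \subset V_c^\perp$, which by the reduction above gives $S = V_c + V^\circ \subset V_c + V_c^\perp$, and hence $\omega$ vanishes identically on $S$ since it vanishes on $V_c$ and between $V_c$ and $V_c^\perp$.

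The only delicate point is the bookkeeping of dimensions in the first step — verifying that $\e_1 \notin K$ so that $V_c$ genuinely decomposes as $K \oplus \C\e_1$ — and making sure that the conclusion $V \subset K^\perp$ for every $V \in L$ really does use only that each $V$ is isotropic and contains $K$, both of which are immediate from the definition of the kernel of a line. No component of the argument requires the curve-neighborhood machinery; it is a direct linear-algebra statement about the degenerate form $\omega$, exploiting that $\e_1 \in \ker\omega$ kills the difference between $V_c^\perp$ and $K^\perp$. I expect no real obstacle beyond stating these steps cleanly.
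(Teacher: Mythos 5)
Your argument is correct and rests on the same linear-algebraic observations as the paper's: $\e_1$ lies in the closed-orbit point of $L$ but not in the open-orbit point, and $\e_1 \in \ker\omega$ handles the one extra direction in $\mathrm{Span}\,L$. The paper checks isotropy directly on an explicit basis $\{\e_1, x_1, \ldots, x_{k-1}, f\}$ of the span, while you repackage the same facts through the identity $V_c^\perp = K^\perp$ and the reduction to $V^\circ \subset V_c^\perp$; these are equivalent formulations of one computation.
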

\begin{proof} Let $x \in L \cap X_c$ and $y \in L \cap X_1^{\circ}$. Since $x \in X_c$ and $y \notin X_c$ we can choose a basis $\{{\bf e}_1,x_1,\cdots,x_{k-1}\}$ for $x$ such that $\{ x_1,\cdots, x_{k-1} \}$ is a basis for $x \cap y$ and choose a basis $\{x_1,\cdots, x_{k-1},f \}$ for $y$. Then $\{{\bf{e}}_1,x_1, \cdots, x_{k-1}, f\}$ is a basis for $\mathrm{Span}~L = \langle x ,y \rangle$. Clearly $\langle x_i, f \rangle = 0$ and since ${\bf e}_1 \in \ker \omega$ it follows that $\langle {\bf e}_1, f \rangle = 0$. This finishes the proof.\end{proof}

We note that this is the best result possible. For instance let $n=k=2$ and consider the line that contains the $T$-fixed points $(1<3)$ and $(1<\overline{3})$ (this is a line included in the closed orbit $X_c \simeq \IG(1, 4) \simeq \mathbb{P}^3$). Then $\Span L= \left<\e_1,\e_3, \e_{\overline{3}} \right>$ is not isotropic, because $\omega(\e_3,\e_{\overline{3}})=1$. Similarly, the line joining $(2<3)$ to $(3< \bar 2)$ (a line in the open orbit) has again non-isotropic span; see figure 2 below for more examples. 

We will need to calculate $\dim K_2$. For that, observe that to construct a triple in $K_2$ one first chooses $V_k \in \IG(k, 2n+1)_c \simeq \IG(k-1, 2n)$, then $V_{k-1}$ in an open set in $\Gr(k-1, V_k)$, and then finally an open set of $V_{k+1} \in \Gr(1, V_{k-1}^\perp/V_k) $. (The spaces $V_{k+1}$ obtained this way are automatically isotropic, because $\e_1 \in V_k$.) This yields
\begin{equation}\label{dimK2} \dim K_2 = \dim \IG(k-1,2n)+ (k-1) + (2n-2k+1) = (k-1)(2n-k+1)- \frac{(k-1)(k-2)}{2}+2n-k \/. \end{equation}  
\begin{proof}[Proof of Theorem \ref{thm:pigeom}] The definition of $\mathcal{M}^\circ$ implies that $\pi$ is surjective over the open orbit $X^\circ$. By \cite[Prop. 2.3]{BCMP:qkfin} this is a locally trivial fibration, and because both $\mathcal{M}^\circ$ and $X^\circ$ are smooth and irreducible, it follows that the fibers over $X^\circ$ are also smooth and irreducible. Notice that the same result implies that $\pi^{-1}(X_c)$ is a locally trivial fibration over $X_c$. To prove (a) it remains to show that the fibre $\pi^{-1}(1.P)$ is nonempty, irreducible and generically smooth.

As explained in  \S \ref{s:moment}, there is a line joining $1.P$ to $\langle e_2, e_3, \ldots , e_k, e_{k+1} \rangle \in X^\circ$. Thus $\pi^{-1}(1.P) \neq \emptyset$. We prove next that the reduced support $(\pi^{-1}(X_c))_{red}$ is irreducible, which implies that 
$\pi^{-1}(X_c)$ is again irreducible. Then we will use a local calculation to find an open dense set of $\pi^{-1}(1.P)$ where it is smooth. In the process we will simultaneously prove both (a) and (b). 

To start, there is a bijective morphism $K_2 \to (\pi^{-1}(X^c))_{red}$ defined as follows: to each pointed line $(p \in L)$ in $\IG$ such that $ p \in L \cap X_c$ and $L \cap X^{\circ} \neq \emptyset$ one associates the element $(\ker L, p, \Span L) \in \IF^{odd}(k-1, k , k+1; 2n+1)$. (The fact that the span of $L$ is isotropic follows from Lemma \ref{iso}.) Conversely, to each element $(V_{k-1} \subset V_k \subset V_{k+1}) \in K_2$ one associates the line $L:= \mathbb{P}(V_{k+1} / V_{k-1})$ and the point $V_k \in X_c$. Since $V_{k+1}$ is isotropic it follows that $L$ is a line in $\IG$; the condition $\e_1 \notin V_{k-1}$ implies that $L$ cannot be included in the closed orbit, so $L \cap X^\circ \neq \emptyset$. The fact that this is an algebraic morphism follows e.g.~because $K_2$ is an orbit of $\Sp_{2n+1}$.~This proves that $\pi^{-1}(X^c)$ is irreducible. Since $\pi^{-1}(X_c) \to X_c$ is a locally trivial fibration, it follows that $\pi^{-1}(1.P)$ is irreducible, and that it has dimension \[ \dim \pi^{-1}(1.P) = \dim K_2 - \dim X_c = 2n-k = \dim \mathcal{M}^\circ - \dim \IG \/. \] 
Turning to smoothness, we will show that there exist open sets $U_1 \subset \IG$ and $U_2 \subset \mathcal{M}^\circ$ such that $1.P \in U_1$, $U_2 \subset \pi^{-1}(U_1)$, $U_i$'s are isomorphic to open sets in some affine spaces $\mathbb{A}^{N_i} $, $i=1,2$ (for appropriate $N_i$), and such that the induced map $U_2 \to U_1$ is smooth. Using the coordinate charts in $\Gr(k, 2n+1)$ one defines the open set $U_1$ around $1.P$ to be given
by the column space of the matrix $\begin{pmatrix} I_k \\ A \end{pmatrix}$ where $A= (a_{i,j})$ is a $(2n+1 - k) \times k$ matrix. The isotropy constraints on the coordinates can be arranged in a triangular system with equations of the form $a_{i,j} + {\rm quadratic~terms} = 0$, where $1 \le j \le k-1$ and $ \overline{k} \le i \le \overline{j+1}$. This implies that $U_1$ is isomorphic to an affine space $\mathbb{A}^{\dim \IG}$. 
%
%\[
%\left( \begin{array}{cccccc}
%1 & 0 &0  & \cdots & 0 & 0 \\
%0 & 1 &0 & \cdots & 0 & 0 \\
%0 & 0 & 1 & \cdots & 0 & 0 \\
%\vdots   & \vdots   &  \vdots & \cdots & \vdots&\vdots \\
%0   & 0   &  0 & \cdots & 0 & 1 \\
%a_{k+1,1}   & a_{k+1,2}   &  a_{k+1,3} & \cdots & a_{k+1,k-1} & a_{k+1,k} \\
%\vdots   & \vdots   &  \vdots & \cdots & \vdots & \vdots \\
%a_{\overline{k+1},1}   & a_{\overline{k+1},2}    &  a_{\overline{k+1},3}  & \cdots & a_{\overline{k+1},k-1}  & a_{\overline{k+1},k}  \\
%a^{\bullet}_{\overline{k},1}   & a^{\bullet}_{\overline{k},2}   &  a^{\bullet}_{\overline{k},3} & \cdots & a^{\bullet}_{\overline{k},k-1} & a_{\overline{k},k}  \\ 
%\vdots   & \vdots   &  \vdots & \cdots & \vdots & \vdots \\
%a^{\bullet}_{\overline{4},1}    &  a^{\bullet}_{\overline{4},2}    &  a^{\bullet}_{\overline{4},3} & \cdots & a_{\bar{4},k-1} & a_{\bar 4, k} \\
%a^{\bullet}_{\overline{3},1}   & a^{\bullet}_{\overline{3},2}   &  a_{\bar 3, 3} & \cdots &  a_{\bar 3, k-1} & a_{\bar 3, k} \\
%a^{\bullet}_{\overline{2},1}   & a_{\bar 2, 2}   &  a_{\bar 2, 3} & \cdots & a_{\bar 2,k-1} &  a_{\bar 2, k} 
%\end{array} \right).
%\] 
%A space $V \in U_1$ is given by the $k$ vector columns $v_1, \ldots , v_k$. 
%Each of the coordinates $a^\bullet_{i,j}$ is determined from the other coordinates $a_{s,t}$ by imposing the condition that $V$ is isotropic. It turns out that the coordinates $a^\bullet_{i,j}$ can be solved from a triangular subsystem of isotropy equations, with each coordinate given by a {\em linear} condition. Therefore $U$ is isomorphic to an affine space $\mathbb{A}^{\dim \IG}$. 

To define $U_2$, observe that an open set in the dual projective space of codimension $1$ subspaces $V_{k-1} \subset V_k= \langle v_1, \ldots , v_k \rangle \in U_1$ where $\e_1 \notin V_{k-1}$ is given by $\left< v_i+c_iv_1:  2 \leq i \leq k, c_i \in \mathbb{C} \right>$. Then an open set $U_2$ around triples containing such $V_k$ is given by the column span of the matrix {\color{black} $C:= (C_1 | C_2 | \ldots | C_k | C_{k+1})$ where $C_i$ are column vectors in $\C^{2n+1}$, defined as follows: \[ C_k = \e_1 + \sum_{j=k+1}^{\bar{2}} a_{j,1} \e_j \/; \quad  C_{k+1} = \e_{k+1} + \sum_{j = k+2}^{\bar{2}} d_j \e_j \/, \] and
\[ C_i = c_{i+1} \e_1 + \e_{i+1} + \sum_{j=k+1}^{\bar{2}} (a_{j,i+1} + c_{i+1} a_{j,1}) \e_j \/; \quad 1 \le i \le k-1 \/. \] By definition, the span $\Sigma_{k-1}:= \mathrm{span} (C_1, \ldots , C_{k-1}) $ of the first $k-1$ columns is an isotropic subspace, and the column vectors $C_k$ and $C_{k+1}$ are perpendicular to $\Sigma_{k-1}$; the projection to $\IG(k-1, 2n+1)^\circ$ sends the matrix $C$ to $\Sigma_{k-1}$. The
isotropy conditions translate into linear constraints which determine the coordinates $d_{\bar{2}}, \ldots , d_{\bar{k}}$ and the coordinates $a_{i,j}$, where $1 \le j \le k-1$ and $ \overline{k} \le i \le \overline{j+1}$ (these latter constraints are the same as those from $U_1$). There are $(2n+2-k)(k-1)+4n-2k+1$ coordinates and $\frac{k(k-1)}{2}+k-1$ of them are determined from linear constraints; this shows that $U_2 \simeq \mathbb{A}^{\dim \mathcal{M}^\circ}$. In these coordinates the map $\pi_{U_2}: U_2 \to U_1$ becomes the linear map given by $c_i \mapsto 0$ and $d_i \mapsto 0$. In particular, this map is smooth, and the fiber $\pi^{-1}(1.P) \cap U_2$ is smooth. This finishes the proof.} \end{proof}

\begin{example}\label{ex:coords} We illustrate the local calculation for $k=n=3$. The open sets $U_1 \subset \IG(3,7)$ and $U_2 \subset \mathcal{M}^\circ \simeq \mathcal{F}\ell(1,2; \mathcal{S}_2^\perp / \mathcal{S}_2)$ (a flag bundle over $\IG(2,7)^\circ$) are given by:
\[ U_2= \left( \begin{array}{c c | c| c}  c_2 & c_3 & 1 & 0 \\ 1 &  0 & 0 & 0 \\ 0 & 1 & 0 & 0 \\ a_{4,2}+c_2 a_{4,1} & a_{4,3} + c_3 a_{4,1} & a_{4,1} & 1 \\ a_{\bar{4},2} + c_2 a_{\bar{4},1} & a_{\bar{4},3} + c_3 a_{\bar{4},1} & a_{\bar{4},1} & d_{\bar{4}} \\  a_{\bar{3},2}^\bullet + c_2 a_{\bar{3},1}^\bullet & a_{\bar{3},3} + c_3 a_{\bar{3},1}^\bullet & a_{\bar{3},1}^\bullet & d_{\bar{3}}^\bullet \\ a_{\bar{2},2} + c_2 a_{\bar{2},1}^\bullet & a_{\bar{2},3} + c_3 a_{\bar{2},1}^\bullet & a_{\bar{2},1}^\bullet & d_{\bar{2}}^\bullet \end{array} \right) \overset{\pi_{U_2}}\longrightarrow U_1 = \begin{pmatrix} 1 & 0 & 0 \\ 0 & 1 & 0 \\ 0 & 0 & 1 \\ a_{4,1} & a_{4,2} & a_{4,3} \\ a_{\bar{4},1} & a_{\bar{4},2} & a_{\bar{4} ,3} \\ a_{\bar{3},1}^\bullet & a_{\bar{3},2}^\bullet & a_{\bar{3} ,3} \\ a_{\bar{2},1}^\bullet & a_{\bar{2},2} & a_{\bar{2} ,3} \end{pmatrix} \]
The coordinates with $\bullet$ are determined from linear equations, using the isotropy contraints. For instance, $a_{\bar{2},1}^\bullet$ in $U_1$ is determined by imposing that the first and second column are pependicular, i.e. \[ a_{\bar{2},1}^\bullet \cdot 1 + a_{\bar{4},1} \cdot a_{4,2} - a_{4,1} \cdot a_{\bar{4},2} = 0 \/. \] The third and fourth column vectors from $U_2$ are each perpendicular to the first two column vectors. The dimension of $U_2$ is $17$ (coordinates) - $5$ (linear constraints) $ = 12$, which equals $\dim \Mb_{0,1}(\IG(3,7),1)$, as claimed.
\end{example}

%{\color{red}
%\begin{example} For $\IG(2,5)$ we have the following example:
%\begin{eqnarray*} 
%U_2=\left( \begin{array}{c|c|c}
%c_2 & 1 & 0 \\
%1 & 0 & 0\\
%a_{3,2}+c_2a_{3,1} &a_{3,1} & 1\\
%a_{\bar 3,2}+c_2a_{\bar 3,1} & a_{\bar 3,1} & d_{\bar 3}\\
%a_{\bar 2,2}+c_2a_{\bar 2,1}^{\bullet} & a_{\bar 2,1} & d_{\bar 2}^{\bullet}
%\end{array} \right) \overset{\pi_{U_2}}\longmapsto U_1=\left( \begin{array}{cc}
%1 & 0 \\
%0 & 1\\
%a_{3,1} & a_{3,2}\\
%a_{\bar 3,1} & a_{\bar 3,2}\\
%a_{\bar 2,1}^{\bullet} & a_{\bar 2,2}
%\end{array} \right)
%\end{eqnarray*}
%\end{example}
%}

\subsection{Lines in the closed orbit} Set $\Mb:=\Mb_{0,1}(\IG,1)$ and consider the closed subvariety \[ \mathcal{M}_c:=\Mb \setminus \mathcal{M}^\circ = \{ (p,L) \in \Mb_{0,1}(\IG, 1): L \subset X_c \} \/, \] which consists of lines included in the closed orbit. In terms of triples of flags this consists of triples $(V_{k-1} \subset V_k \subset V_{k+1})$ such that $V_{k-1}$ belongs to the closed orbit in $\IG(k-1, 2n+1)$ (i.e $\e_1 \in V_{k-1}$), and $V_{k+1} \subset V_{k-1}^\perp$. Since $\e_1$ spans the kernel of the odd-symplectic form $\omega$, this is a smooth subvariety of $\Fl(k-1, k, k+1; 2n+1)$, and the universal property for the moduli space of stable maps gives a bijective morphism $\mathcal{M}_c \to \Mb_{0,1}(X_c, 1)$. It follows that $\mathcal{M}_c$ is isomorphic to the moduli space $\Mb_{0,1}(X_c, 1)$. Recall that $X_c$ is isomorphic to the homogeneous space $\IG(k-1, 2n)$, thus the moduli space $\Mb_{0,1}(X_c, 1)$ is a smooth, irreducible variety of dimension \[ \dim \mathcal{M}_c = \dim \IG(k-1, 2n) + 2n+1-(k-1) -2 =   \dim \IG(k-1, 2n) + 2n-k \/.\] (Note the coincidence $\dim \mathcal{M}_c = \dim K_2$.) We recall the following result, proved in Thm. 2.5 and Cor. 3.3 from \cite{BCMP:qkfin}:

\begin{lemma}\label{lemma:fibrec} For every $V \in X_c$, the fibre $\ev_1^{-1}(V)$ of the restricted map $\ev_1: \Mb_{0,1}(X_c,1) \to X_c$ is an irreducible, normal variety of dimension $\dim \mathcal{M}_c - \dim X_c$. \end{lemma}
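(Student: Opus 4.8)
The plan (following the strategy of \cite{BCMP:qkfin}) is to reduce to a single fibre using the transitive $\Sp_{2n}$-action on $X_c\simeq\IG(k-1,2n)$, and then to identify that fibre explicitly as a projective bundle over a projective space. First I would observe that the restricted evaluation map $\ev_1\colon\Mb_{0,1}(X_c,1)\to X_c$ is $\Sp_{2n}$-equivariant; since $X_c$ is a single orbit, any group element carrying one point of $X_c$ to another induces an automorphism of $\Mb_{0,1}(X_c,1)$ matching the two scheme-theoretic fibres, so all fibres of $\ev_1$ are isomorphic. (Concretely, as in the proof of Theorem \ref{thm:pigeom}, one may invoke \cite[Prop.~2.3]{BCMP:qkfin} to see that $\ev_1$ is a locally trivial fibration.) Hence it is enough to show that one fibre is an irreducible, normal variety of dimension $\dim\Mb_{0,1}(X_c,1)-\dim X_c$, which by the displayed formula for $\dim\mathcal{M}_c$ equals $2n-k$.

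Next I would compute that fibre via the kernel--span description. A degree-one stable map to $X_c$ has irreducible domain and is an embedding onto a line, so $\Mb_{0,1}(X_c,1)$ is identified with the variety of pointed lines, i.e.\ with flags $(W_{k-2}\subset W_{k-1}\subset W_k)$ in $\C^{2n}$ such that $W_{k-2}$ is isotropic and $W_k\subset W_{k-2}^\perp$ (this last condition being exactly what forces every intermediate $(k-1)$-plane to be isotropic), with $\ev_1$ the projection to the middle term $W_{k-1}$. Fixing an isotropic $W_{k-1}\in X_c$, the fibre $\ev_1^{-1}(W_{k-1})$ consists of pairs $(W_{k-2},W_k)$ where $W_{k-2}$ is a hyperplane of $W_{k-1}$ (automatically isotropic) and $W_{k-1}\subset W_k\subset W_{k-2}^\perp$; note $W_{k-1}\subset W_{k-1}^\perp\subset W_{k-2}^\perp$, so this set is nonempty. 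As $W_{k-2}$ varies over $\mathbb{P}((W_{k-1})^\vee)\simeq\mathbb{P}^{k-2}$, the subspaces $W_{k-2}^\perp$ form a subbundle of the trivial bundle $\C^{2n}$ containing the trivial subbundle $W_{k-1}$, so the quotients $W_{k-2}^\perp/W_{k-1}$ assemble into a vector bundle $\mathcal{E}$ of rank $(2n-(k-2))-(k-1)=2n-2k+3$ on $\mathbb{P}^{k-2}$, and the fibre is canonically $\mathbb{P}(\mathcal{E})$. A projective bundle over $\mathbb{P}^{k-2}$ is smooth, hence normal, and irreducible, with dimension $(k-2)+(2n-2k+3)-1=2n-k$; together with the first paragraph this would give the lemma.

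The main obstacle is a scheme-theoretic point concealed in the second step: one must know that this explicit $\mathbb{P}(\mathcal{E})$ is the \emph{scheme-theoretic} fibre of $\ev_1$, not just its reduction, since a priori $\ev_1$ could fail to be smooth along some fibre even when all reduced fibres are smooth of the correct dimension. I see two ways to close this gap. The clean route is to carry out the kernel--span identification at the level of schemes, realising $\Mb_{0,1}(X_c,1)$ as an incidence subvariety of $\Fl(k-2,k-1,k;2n)$ and using the universal property of the moduli space; the scheme-theoretic fibre of the projection to $W_{k-1}$ is then visibly $\mathbb{P}(\mathcal{E})$. Alternatively one may argue by generic smoothness: $\Mb_{0,1}(X_c,1)$ is smooth and irreducible (it parametrises pointed lines on the homogeneous variety $X_c$), so in characteristic zero $\ev_1$ is smooth over a dense open of $X_c$, the generic fibre is reduced, and by the isomorphism of all fibres from the first paragraph every fibre is reduced and therefore coincides with $\mathbb{P}(\mathcal{E})$.
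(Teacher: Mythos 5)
The paper does not prove this lemma; it is quoted directly from Theorem~2.5 and Corollary~3.3 of \cite{BCMP:qkfin}, which establish the corresponding statement for Gromov--Witten varieties of arbitrary homogeneous spaces. Your proposal instead gives a self-contained geometric proof specialized to $X_c\simeq\IG(k-1,2n)$, and it is correct. The kernel--span identification of $\Mb_{0,1}(X_c,1)$ with the incidence variety of flags $(W_{k-2}\subset W_{k-1}\subset W_k)$ in $\C^{2n}$ with $W_{k-2}$ isotropic and $W_k\subset W_{k-2}^\perp$ is the right one (indeed a pencil of $(k-1)$-planes from $W_{k-2}$ inside $W_k$ lies entirely in the isotropic Grassmannian iff $W_{k-2}$ is isotropic and $W_k\subset W_{k-2}^\perp$), and it exhibits the fibre $\ev_1^{-1}(W_{k-1})$ as a $\mathbb{P}^{2n-2k+2}$-bundle over $\mathbb{P}(W_{k-1}^\vee)\simeq\mathbb{P}^{k-2}$, of dimension $(k-2)+(2n-2k+2)=2n-k=\dim\mathcal{M}_c-\dim X_c$, which is irreducible and smooth, hence normal. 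Both of your proposed remedies for the scheme-theoretic subtlety are sound: the incidence-variety description realizes the scheme-theoretic fibre directly, while the flatness (equidimensional fibres of a map between smooth irreducible varieties) plus generic smoothness plus transitivity of the $\Sp_{2n}$-action also works. What your route buys over citing \cite{BCMP:qkfin} is concreteness and a strictly stronger conclusion---smoothness and an explicit projective-bundle structure, not merely normality; what it gives up is the generality of the cited result, which covers all $G/P$ and all degrees.
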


We combine the previous lemma to Theorem \ref{thm:pigeom} to obtain the main result of this section.

\begin{thm}\label{thm:fibres} Consider the evaluation map $\ev_1: \Mb_{0,1}(\IG, 1) \to \IG$. Then the following hold:

(a) For any $V \in \IG$, the fibre $\ev_1^{-1}(V)$ is pure dimensional of dimension $\dim  \Mb_{0,1}(\IG, 1) - \dim \IG$, and each of its components is generically smooth. In particular, $\ev_1$ is flat.

(b) For any Schubert variety $X(w) \subset X_c$, the preimage $\ev_1^{-1}(X(w))$ has two irreducible components:
\[ \ev_1^{-1}(X(w)):= A_1 \cup A_2 \/, \] where $A_1$ is the closure of the subvariety of pointed lines $(p, L)$ such that $L \cap X^\circ \neq \emptyset$, and $A_2$ is the closed subscheme corresponding to $(p,L)$ such that $L$ is included in the closed orbit $X_c$. Further, each irreducible component is generically smooth of expected dimension $\dim \Mb_{0,1}(\IG,1) - \mathrm{codim}_{\IG} X(w)$. \end{thm}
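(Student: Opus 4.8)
The plan is to analyze every preimage under $\ev_1$ by splitting it along the decomposition $\Mb_{0,1}(\IG,1) = \mathcal{M}^\circ \sqcup \mathcal{M}_c$, where $\mathcal{M}^\circ$ is the open locus of pointed lines meeting the open orbit $X^\circ$ --- it is dense, since $\Mb_{0,1}(\IG,1)$ is irreducible by Proposition \ref{prop:vfund} and $\mathcal{M}^\circ\neq\emptyset$ --- and $\mathcal{M}_c = \Mb_{0,1}(\IG,1)\setminus\mathcal{M}^\circ$ is the closed locus of pointed lines contained in $X_c$. Over the open stratum I would invoke Theorem \ref{thm:pigeom} for $\pi:=\ev_1|_{\mathcal{M}^\circ}$; over the closed stratum I would use the isomorphism $\mathcal{M}_c\simeq\Mb_{0,1}(X_c,1)$ together with Lemma \ref{lemma:fibrec} and the standard facts about Gromov--Witten varieties of Schubert varieties in the homogeneous space $X_c\simeq\IG(k-1,2n)$ (cf.~\cite{buch.m:nbhds,BCMP:qkfin}). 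The numerical backbone is the single identity
\[ \dim\Mb_{0,1}(\IG,1)-\dim\IG = \deg q-2 = 2n-k \/, \]
which agrees simultaneously with $\dim\mathcal{M}^\circ-\dim\IG$, with $\dim\mathcal{M}_c-\dim X_c$, and (via \eqref{dimK2}) with $\dim K_2-\dim X_c$; this coincidence is exactly what will force purity in (a) and precisely two components in (b).

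For (a), fix $V\in\IG$. Set-theoretically $\ev_1^{-1}(V)$ is the union of $\ev_1^{-1}(V)\cap\mathcal{M}^\circ=\pi^{-1}(V)$, which is irreducible, generically smooth, of dimension $2n-k$ by Theorem \ref{thm:pigeom}(a), and of $\ev_1^{-1}(V)\cap\mathcal{M}_c$, which is empty unless $V\in X_c$ (a line inside $X_c$ meets no point of $X^\circ$) and, when $V\in X_c$, is the fibre over $V$ of $\ev_1\colon\Mb_{0,1}(X_c,1)\to X_c$, hence irreducible, normal, of dimension $\dim\mathcal{M}_c-\dim X_c=2n-k$ by Lemma \ref{lemma:fibrec}. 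Passing to closures inside the closed set $\ev_1^{-1}(V)$, it follows that $\ev_1^{-1}(V)$ is pure of dimension $2n-k=\dim\Mb_{0,1}(\IG,1)-\dim\IG$ with generically smooth components. Since every fibre of $\ev_1$ then has the same dimension, the source $\Mb_{0,1}(\IG,1)$ is smooth (hence Cohen--Macaulay) by Proposition \ref{prop:vfund}, and $\IG$ is smooth, I would deduce that $\ev_1$ is flat by ``miracle flatness''.

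For (b), let $X(w)\subset X_c$; being $B$-stable it is a Schubert variety in the homogeneous space $X_c\simeq\IG(k-1,2n)$. Put $A_1:=\overline{\ev_1^{-1}(X(w))\cap\mathcal{M}^\circ}=\overline{\pi^{-1}(X(w))}$ and $A_2:=\ev_1^{-1}(X(w))\cap\mathcal{M}_c$. By Theorem \ref{thm:pigeom}(b) the variety $\pi^{-1}(X_c)$ is an $\Sp_{2n+1}$-orbit, hence smooth, and the proof of that theorem shows $\pi^{-1}(X_c)\to X_c$ is a locally trivial fibration; restricting over $X(w)$ gives a fibration with irreducible smooth fibre $\pi^{-1}(V)$, so $\pi^{-1}(X(w))$ is irreducible of dimension $\dim X(w)+(2n-k)$ and is smooth over the smooth locus of $X(w)$, whence $A_1$ is generically smooth. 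Under $\mathcal{M}_c\simeq\Mb_{0,1}(X_c,1)$, the scheme $A_2$ is the Gromov--Witten variety $\ev_1^{-1}(X(w))$ of a Schubert variety in a homogeneous space; since by Lemma \ref{lemma:fibrec} all fibres of $\ev_1\colon\Mb_{0,1}(X_c,1)\to X_c$ are irreducible and equidimensional, the results of \cite{buch.m:nbhds,BCMP:qkfin} give that $A_2$ is irreducible of dimension $\dim X(w)+(2n-k)$ and generically smooth. Now $\dim X(w)+(2n-k)=\dim\IG-\mathrm{codim}_{\IG}X(w)+(2n-k)=\dim\Mb_{0,1}(\IG,1)-\mathrm{codim}_{\IG}X(w)$ is the asserted expected dimension; and $A_1\neq A_2$ because $A_1$ meets the open set $\mathcal{M}^\circ$ whereas $A_2$ is disjoint from it, so --- having equal dimension --- neither contains the other, and $\ev_1^{-1}(X(w))=A_1\cup A_2$, so these are exactly the irreducible components.

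I expect the main obstacle to be bookkeeping rather than a single hard step: one must verify carefully that the fibre of $\ev_1$ over a point of the closed orbit really is the union of the two strata, each of the common dimension $2n-k$ (this uses both clauses of Theorem \ref{thm:pigeom} and the count \eqref{dimK2} of $\dim K_2$), and one must pin down, with the right references, the irreducibility and expected-dimensionality of Gromov--Witten varieties of Schubert varieties in the homogeneous $X_c$. A secondary subtlety is making ``generically smooth'' legitimate, i.e.~checking that $\ev_1^{-1}(X(w))$ is generically reduced along each $A_i$: over a field of characteristic zero this is automatic for each reduced irreducible component, and the reduced structure at the generic points can be read off from $X(w)$ via the flatness established in (a).
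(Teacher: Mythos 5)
Your proposal is correct and follows essentially the same path as the paper: decompose $\Mb_{0,1}(\IG,1)$ into $\mathcal{M}^\circ$ and $\mathcal{M}_c$, apply Theorem \ref{thm:pigeom} over the open stratum and Lemma \ref{lemma:fibrec} over the closed one, observe that both pieces of the fiber have the common dimension $2n-k$ so that neither absorbs the other, and use miracle flatness (\cite[Thm.\ 23.1]{Matsumura}) plus local triviality to propagate the fiber structure over $X(w)$. The one step you leave slightly implicit --- that the closed stratum $F_c$ of the fiber is not contained in $\overline{F^\circ}$, which follows because $F_c$ is irreducible of the \emph{same} dimension as the irreducible $\overline{F^\circ}$ yet is disjoint from the dense open $F^\circ$ --- is exactly the point the paper makes explicit (and reinforces with a footnote via isotropy of spans); your proof would be complete once you spell this out.
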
 

\begin{proof} Since $\Sp_{2n+1}$ acts transitively on the open orbit $X^\circ$, the morphism $\ev_1$ is flat, and the fibres have the stated dimension. Transitivity implies that all fibers over the closed orbit are isomorphic, thus it suffices to take $V= 1.P$. Let $F:= \ev_1^{-1}(1.P)$ be the fibre. Recall the notation $\mathcal{M}^\circ$ and $\mathcal{M}_c$.  Clearly $F$ can be written as the disjoint union $F= F^\circ \cup F_c$ where $F^\circ := F \cap  \mathcal{M}^\circ$ is open in $F$ and $F_c:=  F \setminus F^\circ$ is closed in $\mathcal{M}_c$. It follows from Theorem \ref{thm:pigeom} that $F^\circ$ is irreducible, generically reduced, and of the stated dimension. On the other side, Lemma \ref{lemma:fibrec} implies that $F_c$ is irreducible, reduced, of dimension \[ \dim F_c = \dim \mathcal{M}_c - \dim X_c = \dim \mathcal{M} - \dim \IG \/; \] (the last equality is a simple calculation). Therefore $F_c$ cannot in the closure of $F^\circ$, and the statements about $F$ hold.\begin{footnote} {Another way to see that $F_c \subsetneq \overline{F^\circ}$ is to notice that every line in $F^\circ$ has isotropic span, therefore any line in the closure must satisfy the same property. But we have seen that there exist lines in $X_c$ with non isotropic span.}\end{footnote} The flatness follows from \cite[Theorem 23.1]{Matsumura}, taking into account that both source and target of $\ev_1$ are smooth varieties, and that all fibers have the same dimension. Flatness implies that the GW variety $GW_1(w)$ from part (b) is pure dimensional of expected dimension. Further, using transitivity and applying \cite[Prop. 2.3]{BCMP:qkfin} to each irreducible component of $\ev_1^{-1}(X_c)$ implies that the map $\ev_1: \ev_1^{-1}(X_c) \to X_c$ is a locally trivial fibration  with fibre $F$. Then the restriction to $\ev_1^{-1}(X(w))$ is a locally trivial fibration over $X(w)$ with fibre $F$, and the statement in (b) follows. \end{proof}

\subsection{Lines with two marked points} Define $\xi: \overline{\mathcal{M}}_{0,2}(\IG,1) \longrightarrow \overline{\mathcal{M}}_{0,1}(\IG,1)$ to be the map forgetting the second marked point. \begin{prop}\label{prop:forget} The forgetful map $\xi: \Mb_{0,2}(\IG,1) \longrightarrow \Mb_{0,1}(\IG,1)$ is a locally trivial $\mathbb{P}^1$-fibration. \end{prop}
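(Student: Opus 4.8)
The plan is to recognize $\xi$ as a restriction of the universal family over the (irreducible, smooth) moduli space $\Mb_{0,1}(\IG,1)$. Recall the standard fact from Gromov--Witten theory (see \cite{FP}) that for any target $\mathcal{X}$ the forgetful map $\Mb_{0,n+1}(\mathcal{X},d) \to \Mb_{0,n}(\mathcal{X},d)$ is flat, proper, with one-dimensional fibers, and that it is the universal curve; when the base is a fine moduli space it is literally the pullback of the universal family. In our situation $n=1$, so $\xi: \Mb_{0,2}(\IG,1) \to \Mb_{0,1}(\IG,1)$ is the universal curve over $\Mb_{0,1}(\IG,1)$. Since every stable map of degree $1$ has irreducible domain $\mathbb{P}^1$ (a degree-$1$ map cannot have a contracted component without destabilizing, and cannot have two non-contracted components since $1$ is indecomposable as an effective class), there are no nodal fibers: every fiber of $\xi$ is isomorphic to $\mathbb{P}^1$. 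Thus $\xi$ is a proper flat morphism all of whose fibers are $\mathbb{P}^1$.

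The next step is to upgrade ``all fibers $\cong \mathbb{P}^1$'' to ``Zariski-locally trivial $\mathbb{P}^1$-bundle''. For this I would invoke Proposition \ref{prop:vfund}: $\Mb_{0,1}(\IG,1)$ is a smooth variety. A proper flat morphism with smooth base whose geometric fibers are all isomorphic to $\mathbb{P}^1$ is a $\mathbb{P}^1$-fibration; in fact over a smooth (or even normal) base such a family is \'etale-locally, hence (by Tsen's theorem / triviality of Brauer classes in the relevant situation) Zariski-locally, trivial. Concretely, the universal stable map $\Mb_{0,2}(\IG,1) \to \IG$ composed with $\xi$ exhibits $\Mb_{0,2}(\IG,1)$ as $\mathbb{P}(\mathcal{E})$ for a rank-$2$ vector bundle $\mathcal{E}$ on $\Mb_{0,1}(\IG,1)$: the two sections $\sigma_1$ (the first marked point, which is a section of $\xi$ by definition of the universal curve) and $\ev_2$-data give a relatively ample line bundle, so $\xi$ is the projectivization of $\xi_* \mathcal{O}(\sigma_1)$ or a similar pushforward, which is locally free of rank $2$ by cohomology and base change since $h^0(\mathbb{P}^1, \mathcal{O}(1)) = 2$ is constant. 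Hence $\xi$ is a Zariski-locally trivial $\mathbb{P}^1$-fibration.

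Alternatively, and perhaps more cleanly, I would use the explicit description of $\Mb_{0,1}(\IG,1)$ from this section: set-theoretically it is the three-step-flag type variety $\{(V_{k-1} \subset V_k \subset V_{k+1})\}$, and a $1$-pointed line is recovered as $L = \mathbb{P}(V_{k+1}/V_{k-1})$ with marked point $V_k/V_{k-1} \in L$. Then $\Mb_{0,2}(\IG,1)$ is the variety of such data together with a second point $p_2 \in \mathbb{P}(V_{k+1}/V_{k-1})$; i.e. it is precisely the $\mathbb{P}^1$-bundle $\mathbb{P}(\mathcal{S}_{k+1}/\mathcal{S}_{k-1})$ over $\Mb_{0,1}(\IG,1)$, where $\mathcal{S}_{k-1} \subset \mathcal{S}_{k+1}$ are the tautological bundles of ranks $k-1$ and $k+1$. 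This manifestly gives a Zariski-locally trivial $\mathbb{P}^1$-fibration, and the forgetful map is visibly the bundle projection. I would present this last argument as the main one, using the earlier identification, and remark that it also re-proves flatness.

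The main obstacle, such as it is, is purely bookkeeping: one must make sure that the quotient $\mathcal{S}_{k+1}/\mathcal{S}_{k-1}$ is honestly a rank-$2$ vector bundle on all of $\Mb_{0,1}(\IG,1)$ (including its boundary, if any — but here the space is already the whole smooth variety, with no boundary since $d=1$ forces irreducible domains), and that the resulting $\mathbb{P}(\mathcal{S}_{k+1}/\mathcal{S}_{k-1})$ with its two tautological points $V_k/V_{k-1}$ and the new point really is the moduli functor $\Mb_{0,2}(\IG,1)$ — i.e. that the universal property matches. Both are straightforward given that degree-$1$ stable maps to $\IG$ are embeddings of $\mathbb{P}^1$ (lines), as established earlier in the section via the Pl\"ucker embedding, so no stability subtleties arise.
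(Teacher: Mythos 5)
Your argument is correct in outline, but it takes a genuinely different route from the paper's. The paper embeds $\IG$ into the ordinary Grassmannian $\Gr(k,2n+1)$, observes that $\Mb_{0,1}(\Gr,1)\simeq\Fl(k-1,k,k+1;2n+1)$ is a homogeneous space for $\SL_{2n+1}$, and hence (by the equivariance result \cite[Prop.~2.3]{BCMP:qkfin}) the forgetful map on the Grassmannian side is already a locally trivial $\mathbb{P}^1$-fibration; it then exhibits $\Mb_{0,2}(\IG,1)$ as the fibre product $\Mb_{0,1}(\IG,1)\times_{\Mb_{0,1}(\Gr,1)}\Mb_{0,2}(\Gr,1)$ (using Zariski's Main Theorem and smoothness of $\Mb_{0,2}(\IG,1)$ from Proposition~\ref{prop:vfund} to promote the obvious bijection to an isomorphism), and local triviality is inherited from the right-hand column. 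You instead argue intrinsically, either by recognizing $\xi$ as the universal curve with $\mathbb{P}^1$-fibers and a tautological section $\sigma_1$ (so $\Mb_{0,2}\cong\mathbb{P}(\xi_*\mathcal{O}(\sigma_1))$ by cohomology-and-base-change), or by building $\mathbb{P}(\mathcal{S}_{k+1}/\mathcal{S}_{k-1})$ directly over the flag model. Both of your arguments work, but they carry a burden the paper avoids: you must verify that the explicit flag-variety model of $\Mb_{0,1}(\IG,1)$ is a \emph{scheme-theoretic} isomorphism (the paper deliberately only asserts it set-theoretically and flags it as ``logically not needed''), and that your candidate $\mathbb{P}^1$-bundle actually represents the moduli functor $\Mb_{0,2}(\IG,1)$. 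You acknowledge this gap but dismiss it as bookkeeping; the paper's fibre-product argument is precisely the clean way to close it. One small inaccuracy: Tsen's theorem (trivial Brauer group of function fields of curves over algebraically closed fields) is not the right tool to remove the Brauer obstruction here; the existence of the section $\sigma_1$ is what does it, and you do mention this in the next breath, so the conclusion stands.
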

\begin{proof} Consider the embedding $\IG \subset \Gr:=\Gr(k,2n+1)$. We first prove the statement with $\IG$ replaced by $\Gr$. Recall that the moduli space
$\Mb_{0,1}(\Gr, 1) $ may be identified to the partial flag manifold $\mathrm{Fl}(k-1,k,k+1; 2n+1)$. It follows in particular that $\Mb_{0,1}(\Gr, 1)$ admits a transitive action of $\SL:=\SL_{2n+1}$. Then by \cite[Prop. 2.3]{BCMP:qkfin} the forgetful map $\overline{\mathcal{M}}_{0,2}(\Gr,1) \to \Mb_{0,1} (\Gr,1)$ is an $\SL$-equivariant locally trivial fibration with fibres isomorphic to $\mathbb{P}^1$. Consider the commutative diagram:
%\vskip-1cm
\[\begin{tikzpicture}[->,>=stealth',auto,node distance=3cm,main node/.style={font=\large}]

%\[\begin{tikzpicture}[->,>=stealth',auto,node distance=3cm,main node/.style={font=\large}]

  \node[main node] (1) {$\overline{\mathcal{M}}_{0,2}(\IG,1)$};
  \node[main node]  (2) [below right of=1]{FP};
  \node[main node]  (3) [below of=2]{$\overline{\mathcal{M}}_{0,1}(\IG,1)$};
 \node[main node]  (4) [right of=2]{$\overline{\mathcal{M}}_{0,2}(\Gr,1)$};
  \node[main node]  (5) [right of=3]{$\overline{\mathcal{M}}_{0,1}(\Gr,1)$};

  \path[every node/.style={font=\sffamily\large}]
    (1) edge [dotted] node {$\psi$}  (2)
    (2) edge  node {$\pi_1$}  (3)
    (1) edge [bend right] node {$\xi$}  (3)
    (1) edge [bend left] node {$j''$}  (4)
     (2) edge node {$\pi_2$}  (4)
     (4) edge node {$\xi_2$}  (5)
     (3) edge node {$j'$}  (5);
\end{tikzpicture}\]
%
%\[\begin{tikzpicture}[->,>=stealth',auto,node distance=3cm,main node/.style={font=\large}]
%
%  \node[main node] (1) {$\overline{\mathcal{M}}_{0,2}(\IG,1)$};
%  \node[main node]  (2) [below right of=1]{FP};
%  \node[main node]  (3) [below of=2]{$\overline{\mathcal{M}}_{0,1}(\IG,1)$};
% \node[main node]  (4) [right of=2]{$\overline{\mathcal{M}}_{0,2}(\Gr,1)$};
%  \node[main node]  (5) [right of=3]{$\overline{\mathcal{M}}_{0,1}(\Gr,1)$};
%
%  \path[every node/.style={font=\sffamily\large}]
%    (1) edge [dotted] node {$\psi$}  (2)
%    (2) edge  node {$\pi_1$}  (3)
%    (1) edge [bend right] node {$\xi_2$}  (3)
%    (1) edge [bend left] node {$j''$}  (4)
%     (2) edge node {$\pi_2$}  (4)
%     (4) edge node {$\xi_2$}  (5)
%     (3) edge node {$j'$}  (5);
%\end{tikzpicture}\]
where $FP$ denotes the fibre product and $j', j''$ are the closed embeddings determined by the embedding $\IG \subset \Gr$. The map $\psi$ is determined by the universal property for fibre products. It is easy to check that $\psi$ is bijective. Since both $\Mb_{0,2}(\IG,1)$ and $FP$ are smooth varieties $\psi$ is in fact an isomorphism, by Zariski's Main Theorem. Since the right vertical arrow is a $\mathbb{P}^1$-fibration, so is the left vertical arrow $FP \simeq \Mb_{0,2}(\IG,1) \to \Mb_{0,1}(\IG,1)$. This proves the statement. \end{proof}

Combining Proposition \ref{prop:forget} and Theorem \ref{thm:fibres} imply the main result of this section. Recall that $GW_1(w)$ denotes the Gromov-Witten variety $\ev_1^{-1}(X(w))$. Obviously $\ev_1$ is the composition of the forgetful map $\xi$ with the evaluation map from $\Mb_{0,1}(\IG,1)$. 

\begin{cor}\label{cor:GW} Consider a Schubert variety $X(w) \subset X_c$. Then the Gromov-Witten variety $GW_1(w)$ has two irreducible components \[  GW_1(w) = GW_1^{(1)}(w) \cup GW_1^{(2)}(w) \/, \] where $GW_1^{(1)}(w)= \xi^{-1}(A_1)$ is the closure of the subvariety corresponding to lines $L$ such that $L \cap X^\circ \neq \emptyset$, and $GW_1^{(2)}(w)= \xi^{-1}(A_2)$ is the closed subscheme corresponding to lines $L$ included in the closed orbit $X_c$. Further, each irreducible component is generically smooth and it has dimension $\dim \Mb_{0,2}(X,1) - \mathrm{codim}_{X} X(w)$. \end{cor}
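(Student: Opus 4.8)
The plan is to deduce this formally from Proposition~\ref{prop:forget} and Theorem~\ref{thm:fibres}(b) by pulling back along the forgetful map $\xi$. The starting observation is that forgetting the second marked point does not move the first, so the evaluation map $\ev_1 \colon \Mb_{0,2}(\IG,1) \to \IG$ factors as the composition of $\xi$ with the evaluation map $\ev_1 \colon \Mb_{0,1}(\IG,1) \to \IG$. Consequently
\[ GW_1(w) = \ev_1^{-1}(X(w)) = \xi^{-1}\bigl(\ev_1^{-1}(X(w))\bigr) = \xi^{-1}(A_1 \cup A_2) = \xi^{-1}(A_1) \cup \xi^{-1}(A_2), \]
where $A_1, A_2 \subset \Mb_{0,1}(\IG,1)$ are the two irreducible components furnished by Theorem~\ref{thm:fibres}(b). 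So the whole problem reduces to analysing the two pullbacks $GW_1^{(i)}(w) := \xi^{-1}(A_i)$.

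Next I would record the properties of $\xi^{-1}(A_i)$ that come for free from the fact (Proposition~\ref{prop:forget}) that $\xi$ is a locally trivial $\mathbb{P}^1$-fibration. It is closed, since $A_i$ is closed and $\xi$ is continuous. It is irreducible, since $\xi$ restricts to a locally trivial fibration $\xi^{-1}(A_i) \to A_i$ with irreducible fibre $\mathbb{P}^1$ over the irreducible base $A_i$. It is generically smooth: if $U_i \subset A_i$ denotes the dense open smooth locus of $A_i$, then $\xi^{-1}(U_i)$ is a nonempty open, hence dense, subset of the irreducible variety $\xi^{-1}(A_i)$, and it is smooth because $\xi^{-1}(U_i) \to U_i$ is a smooth morphism onto the smooth variety $U_i$. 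Finally, for the dimension, $\dim \xi^{-1}(A_i) = \dim A_i + 1$, and Theorem~\ref{thm:fibres}(b) gives $\dim A_i = \dim \Mb_{0,1}(\IG,1) - \mathrm{codim}_{\IG} X(w)$; since $\dim \Mb_{0,2}(\IG,1) = \dim \Mb_{0,1}(\IG,1) + 1$, this yields $\dim \xi^{-1}(A_i) = \dim \Mb_{0,2}(\IG,1) - \mathrm{codim}_{\IG} X(w)$, the expected dimension, and in particular $GW_1(w)$ is pure-dimensional since both components have the same dimension.

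It then remains to check that these are exactly the two claimed components, with the stated geometric descriptions, and that they are genuinely distinct. Because $\xi$ only forgets the second marked point, a stable map $(p_1, p_2, L)$ lies in $\xi^{-1}(A_i)$ precisely when the underlying pointed line $(p_1, L)$ lies in $A_i$; hence $\xi^{-1}(A_2)$ is exactly the locus of $(p_1, p_2, L)$ with $L \subset X_c$, and $\xi^{-1}(A_1)$ is the closure of the locus with $L \cap X^\circ \neq \emptyset$ (every line either meets the open orbit or, $X^\circ$ being open and $X_c$ closed, is contained in $X_c$). For distinctness: $\xi$ is surjective, so $\xi(\xi^{-1}(A_i)) = A_i$; if one of the two pullbacks were contained in the other, the same containment would hold for $A_1$ and $A_2$, contradicting that they are distinct irreducible components of $\ev_1^{-1}(X(w))$. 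Therefore $GW_1(w) = GW_1^{(1)}(w) \cup GW_1^{(2)}(w)$ is exactly the decomposition into irreducible components, each generically smooth of the stated dimension.

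Since both of the heavy inputs are already available, there is no genuine obstacle here: the corollary is essentially a bookkeeping consequence of the $\mathbb{P}^1$-fibration structure of $\xi$. The only points that require a modicum of care are the factorization $\ev_1 = \ev_1 \circ \xi$ together with the correct matching of the geometric labels ($A_1$ is the closure of the lines meeting $X^\circ$, while $A_2$ consists of the lines contained in $X_c$), and the standard facts that irreducibility, generic smoothness and purity of dimension are all preserved under pullback along a locally trivial $\mathbb{P}^1$-fibration.
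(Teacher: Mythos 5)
Your proposal is correct and takes exactly the same approach as the paper: factor $\ev_1$ through the forgetful map $\xi$ and pull back the decomposition of Theorem~\ref{thm:fibres}(b) along the locally trivial $\mathbb{P}^1$-fibration of Proposition~\ref{prop:forget}. The paper simply states that this combination "implies" the corollary and leaves the bookkeeping (irreducibility, generic smoothness, and dimension count under a $\mathbb{P}^1$-fibration) implicit, which is precisely what you have spelled out.
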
 

\section{Line neighborhoods}\label{s:linenbhds} In this section we analyze the curve neighborhoods $\Gamma_1(w):=\Gamma_1(X(w))$ (i.e.~the {\em line neighborhoods}) in the case when $X(w) \subset X_c$. By Theorem \ref{thm:vanishing} these are the only ones which may contribute to non-zero GW invariants. By Corollary \ref{cor:GW}, the Gromov-Witten variety $GW_1(w)$ has two components, each of expected dimension. It follows that the curve neighborhood $\Gamma_1(w)$ has at most two components, and we have an equality \[ \Gamma_1(w)= \Gamma_1^{(1)}(w) \cup \Gamma_1^{(2)}(w) \/, \] where $\ev_2: GW_1^{(i)}(w) \to \Gamma_1^{(i)}(w):=\ev_2(GW_1^{(i)}(w))$ ($i=1,2$). By definition, $\Gamma_1^{(1)}(w) \cap X^\circ \neq \emptyset$, $\Gamma_1^{(2)}(w) \subset X_c$, and each of $\Gamma_1^{(i)}(w)$ is irreducible and stable under the standard Borel subgroup; therefore each must be a Schubert variety. Further, since the second component $GW_1^{(2)}(w)$ is the GW variety of lines in the closed orbit $X_c$ - isomorphic to the homogeneous space $\IG(k-1, 2n)$ - it follows from Corollary \ref{cor:evest} that $\Gamma_1^{(2)}(w) = X(w \cdot O_2 W_P)$ where $X(O_2) = \Gamma_1^{X_c}(id)$ is the line neighborhood of the Schubert point in $X_c$. Next we will identify the components $\Gamma_1^{(i)}(w)$. 
\begin{prop}
\label{zzzt} Consider the minimal length representatives $O_1=(2<3<\cdots<k<\overline{k+1})$ and $O_2=(1<3<4<\cdots<k<\overline{2})$. Then the line neighborhood of the Schubert point in $\IG(k,2n+1)$ is $\Gamma_1(id)=X(O_1) \cup X(O_2)$ and $\ell(O_1)=\ell(O_2)=2n+1-k$. (Observe that this equals $\deg q -1 $.) 
\end{prop}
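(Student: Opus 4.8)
The plan is to compute the line neighborhood of the Schubert point $1.P$ directly from the moment graph of $\IG(k,2n+1)$, using Proposition \ref{thm11}. The $T$-fixed point $1.P$ corresponds to the identity coset, so I need the maximal vertices of the moment graph reachable from $1.P$ by a single degree-$1$ edge. The vertices adjacent to the identity in the moment graph of $\IG^{ev}$ were listed explicitly in Section \ref{s:moment} (cases (i)–(iv)), with the observation that only case (iii) has degree $2$ and the rest have degree $1$. Intersecting with the odd-symplectic condition $w(i)\le \bar 2$ and discarding the degree-$2$ edges, the degree-$1$ neighbors of the identity in $\IG(k,2n+1)$ are the vertices of type (i), (ii), (iv) (the last, $2t_i$, with target $(1<\cdots<i-1<i+1<\cdots<k<\bar i)$). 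First I would list these neighbors, then identify among them (together with their Bruhat-order predecessors, which only add vertices that are already below something on the list) the Bruhat-maximal ones. I expect exactly two maximal elements to survive: one coming from the edge $\alpha=t_1-t_{k+1}$ raising $w(1)=1$ to $w(1)=k+1$, giving a vertex in the open orbit, and one coming from an edge of type $t_1+t_2$-flavor staying inside the closed orbit, giving a vertex with $w(1)=1$.

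Concretely, the first candidate is obtained by applying $s_{t_1-t_{k+1}}$, which sends the identity coset to $(2<3<\cdots<k<\overline{k+1}) = O_1$; this is the Bruhat-maximal degree-$1$ neighbor among those leaving the closed orbit, since increasing $w(1)$ as much as possible while keeping the other parts minimal and respecting isotropy forces $w(1)=k+1$ (i.e. $w(1)=\overline{k+1}$ in bar-notation is the largest value $\le \bar 2$ compatible with the remaining entries $2,\dots,k$). The second candidate is $s_{t_1+t_2}$ restricted to the coset: by Example \ref{Ex} the minimal length representative of $s_{t_1+t_2}W_{P_k}$ is $(3<4<\cdots<k<\bar 2<\bar 1)$, but this is \emph{not} odd-symplectic since $\bar 1$ appears. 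Within $W^P\cap W^{odd}$ the relevant maximal vertex reachable by a degree-$1$ edge inside $X_c$ is instead $O_2=(1<3<4<\cdots<k<\overline 2)$: this is precisely $s_{t_1+t_2}\cdot(\text{identity})$ computed inside the closed orbit $X_c\simeq\IG(k-1,2n)$, where by Corollary \ref{cor:evest}(a) applied to $\IG(k-1,2n)$ the point neighborhood is governed by the analogue of $z_1$, and unraveling the identification $X_c\simeq\IG(k-1,2n)$ sends that $z_1$-coset to $O_2$. So $\Gamma_1(id)=X(O_1)\cup X(O_2)$ once we check no other degree-$1$ neighbor dominates either of these in Bruhat order.

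For the length computation, I would use the dictionary between $W^P$ and $(n-k)$-strict partitions from \S\ref{ss:dictionary} (or compute lengths directly as numbers of inversions of signed permutations). For $O_1=(2<3<\cdots<k<\overline{k+1})$: the entry $\overline{k+1}$ contributes its "bar-length," and a short count gives $\ell(O_1)=2n+1-k$. For $O_2=(1<3<4<\cdots<k<\overline 2)$: the single gap created by omitting $2$ from the first block plus the bar-entry $\overline 2$ again yields $\ell(O_2)=2n+1-k$. Both equal $2n+2-k-1=\deg q-1$ by the formula $\deg q=2n+2-k$ from Lemma \ref{lemma:degq}, which is the asserted identity. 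The main obstacle I anticipate is the bookkeeping in the moment-graph step: one must be careful that the predecessors $u<id$ (there are none besides $id$ itself, since $id$ is the minimum) do not matter, and more importantly that among the several degree-$1$ neighbors none is Bruhat-above $O_1$ or $O_2$ — this requires comparing signed-permutation one-line notations carefully, and is the only place where an error could creep in. Everything else is a direct computation.
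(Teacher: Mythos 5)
Your combinatorial approach via Proposition~\ref{thm11} is valid and genuinely different from the paper's proof. The paper reasons geometrically: the preceding paragraph uses Corollary~\ref{cor:GW} to show the Gromov--Witten variety $GW_1(id)$ has exactly two irreducible components of expected dimension, hence $\Gamma_1(id)$ has at most two components; the component inside $X_c$ is identified as $X(O_2)=\Gamma_1^{X_c}(id)$ by applying Corollary~\ref{cor:evest} to the homogeneous space $X_c\simeq\IG(k-1,2n)$; and the dimension bound of Theorem~\ref{Lem10} combined with the existence of a $T$-stable line from $id$ to $O_1$ (observed in \S\ref{s:moment}) forces $X(O_1)$ to be the other component. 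Your argument sidesteps all of that machinery and reads the answer off the moment graph directly, which is cleaner but pushes the burden onto a Bruhat-order computation. That computation does go through: for $W^{P_k}$ in type $C$ the implication ``componentwise $\le$ on the one-line sequences $(w(1)<\cdots<w(k))$ implies Bruhat $\le$'' holds (e.g.\ via the incidence description of Schubert varieties as intersections with type-$A$ Schubert cells), and a case-by-case check on the degree-$1$ neighbors of types (i), (ii), (iv) shows they all lie componentwise below $O_1$ (if $w(1)>1$) or below $O_2$ (the type-(iv) ones); since $\ell(O_1)=\ell(O_2)$ and $O_1\ne O_2$, they are incomparable and both are maximal. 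You flag this bookkeeping as the likely source of errors but do not carry it out; to make the argument airtight this check should be written out, including the one-line comparison needed to see that every type-(iv) vertex $(1<\cdots<\hat\imath<\cdots<k<\bar\imath)$ sits below $O_2$ and every type-(i)/(ii) vertex below $O_1$.

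Two small but substantive slips in the root labels: the reflection sending $id$ to $O_1$ is $s_{t_1+t_{k+1}}$ (a type-(ii) edge), not $s_{t_1-t_{k+1}}$ (which gives $(2<\cdots<k<k+1)$, a strictly smaller vertex); and the reflection sending $id$ to $O_2$ is $s_{2t_2}$ (a type-(iv) edge), not $s_{t_1+t_2}$. Indeed $s_{t_1+t_2}$ labels the unique degree-$2$ edge (case~(iii)), which you correctly exclude from the degree-$1$ count, so invoking it to explain $O_2$ is inconsistent with your own setup. The conclusions you reach ($O_1$, $O_2$) are nevertheless correct, and the length computation $\ell(O_i)=2n+1-k=\deg q-1$ matches the paper's equation~(\ref{E:lambdaO}) and Lemma~\ref{lemma:degq}.
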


Before the proof, we contrast the result above to that for curve neighborhoods in a homogeneous space. For the latter, it was proved in \cite{BCMP:qkfin} and \cite{buch.m:nbhds} that any curve neighborhood of a Schubert variety is a single Schubert variety. For the quasi-homogeneous space $\IG$, this already fails for $\Gamma_1(id)$, but we observe that the components correspond naturally to the orbits of $\Sp_{2n+1}$ on $\IG$.

\begin{proof} The properties of $O_2$ follow from Corollary \ref{cor:evest}. We observed in \S \ref{s:moment} that there exists a line joining $O_1$ to the identity. The fact that $\ell(O_1)= 2n+1-k$ follows immediately from the equation (\ref{E:lambdaO}) below, where we describe $O_1$ in terms of partitions. Finally, since $\ell(O_1) = \deg q -1$, Theorem \ref{Lem10} implies that $X(O_1)$ is a component of $\Gamma_1(w)$. \end{proof}

\begin{thm} \label{thm:curveNBHDS} Let $w=(w(1)<\cdots<w(k)) \in W^P \cap W^{odd}$ be an odd-symplectic minimal length representative such that  $X(w) \subset X_c$ (i.e.~$w(1)=1$). Then the cosets $w\cdot O_1W_P$ and $w \cdot O_2W_P$ have representatives in $W^{odd}$ and \[ \Gamma_1(X(w))=X(w \cdot O_1W_P) \cup X(w \cdot O_2 W_P) \/. \] \end{thm}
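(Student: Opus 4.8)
The theorem asserts that for $X(w)\subset X_c$, the line neighborhood $\Gamma_1(X(w))$ equals $X(w\cdot O_1 W_P)\cup X(w\cdot O_2 W_P)$, where $O_1,O_2$ are the representatives from Proposition \ref{zzzt}. The approach is to combine the structural input from Corollary \ref{cor:GW}---that $GW_1(w)$ has exactly two irreducible components, one ($GW_1^{(1)}(w)$) parametrizing lines meeting the open orbit and one ($GW_1^{(2)}(w)$) of lines inside $X_c$---with the already established base case $\Gamma_1(\mathrm{id})=X(O_1)\cup X(O_2)$. Since $\ev_2$ is $B$-equivariant and proper, each $\Gamma_1^{(i)}(w):=\ev_2(GW_1^{(i)}(w))$ is a $B$-stable irreducible closed subvariety of $\IG$, hence a Schubert variety; so it suffices to identify each of these two Schubert varieties with $X(w\cdot O_i W_P)$ and to verify that $w\cdot O_i W_P$ has an odd-symplectic representative.

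\textbf{Step 1: the component inside the closed orbit.} This is the easy half and is essentially already done in the discussion preceding Proposition \ref{zzzt}: $GW_1^{(2)}(w)$ is the Gromov--Witten variety of lines in the homogeneous space $X_c\simeq \IG(k-1,2n)$, so by the homogeneous theory (Corollary \ref{cor:evest}, or \cite{buch.m:nbhds}) we have $\Gamma_1^{(2)}(w)=X(w\cdot O_2 W_P)$ with $X(O_2)=\Gamma_1^{X_c}(\mathrm{id})$. One must only check that $w\cdot O_2 W_P$ is an odd-symplectic coset: since $w(1)=1$ and $O_2$ is itself odd-symplectic, this is immediate from Lemma \ref{lemma:hclosure}.

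\textbf{Step 2: the component meeting the open orbit.} For $\Gamma_1^{(1)}(w)$ the plan is to argue via the moment graph (Proposition \ref{thm11}) together with the $\Sp_{2n+1}$-equivariance of the whole picture. Write $\Gamma_1^{(1)}(w)=X(v)$ for some $v\in W^P\cap W^{odd}$. One inclusion is produced by exhibiting explicit $T$-stable lines: for every $u\le w$ there is a degree-one edge in the moment graph of $\IG$ from $u$ to the vertex realizing $u\cdot O_1 W_P$ (the one joining $\mathrm{id}$ to $O_1$, translated by the Weyl group element $u$), and each such line meets the open orbit because $O_1(1)=2\ne 1$; hence $u\cdot O_1 W_P\le v$ for all such $u$, giving $X(w\cdot O_1 W_P)\subseteq X(v)$ after taking $u=w$. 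Note here that $w\cdot O_1 W_P$ has an odd-symplectic representative: again $w(1)=1$ and $O_1$ is odd-symplectic, so Lemma \ref{lemma:hclosure} applies. For the reverse inclusion one uses the dimension count: by Corollary \ref{cor:GW} the component $GW_1^{(1)}(w)$ has dimension $\dim\Mb_{0,2}(\IG,1)-\mathrm{codim}_{\IG}X(w)$, so $\dim X(v)=\dim\ev_2(GW_1^{(1)}(w))\le \ell(w)+\deg q -1=\ell(w)+\ell(O_1)=\ell(w\cdot O_1 W_P)$, the last equality by Lemma \ref{lemma:bound} since the product $w\cdot O_1$ is reduced (here one invokes Lemma \ref{Lem11} / the reducedness computation, using $w(1)=1$). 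Combined with the inclusion $X(w\cdot O_1 W_P)\subseteq X(v)$ this forces $v=w\cdot O_1 W_P$.

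\textbf{The main obstacle.} The genuinely delicate point is Step 2, and specifically the dimension estimate $\dim\Gamma_1^{(1)}(w)\le \ell(w)+\ell(O_1)$ together with the reducedness of the Hecke product $w\cdot O_1$. The inclusion $\supseteq$ from moment-graph paths is routine, but to pin down $v$ exactly one needs that $\ev_2$ does not drop dimension below the expected value on this component, i.e.\ that the generic line in $GW_1^{(1)}(w)$ through a generic point of $X(w)$ sweeps out a Schubert variety of the expected codimension; this is where the birationality analysis of the companion section (Theorems \ref{thm:fibres} and the forthcoming \ref{thm:gw1}) and the reducedness of $w\cdot O_1$ via Lemma \ref{Lem11} are the load-bearing facts. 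Everything else---the two-component decomposition, the closure/$B$-stability forcing Schubert varieties, the base case $\Gamma_1(\mathrm{id})$, and the odd-symplectic closure of the relevant cosets---is already in place in the excerpt.
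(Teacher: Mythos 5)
Your overall strategy (invoke the two-component decomposition of $GW_1(w)$ from Corollary \ref{cor:GW}, identify $\Gamma_1^{(2)}(w)=X(w\cdot O_2W_P)$ via the homogeneous theory on $X_c$, and pin down $\Gamma_1^{(1)}(w)$) is the same as the paper's, and Step~1 is correct. The odd-symplectic closure of the cosets via Lemma \ref{lemma:hclosure} is also exactly as in the paper.

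Step~2, however, has a genuine gap that the paper is careful to avoid. You claim that the $u$-translate of the $T$-stable line from $\mathrm{id}$ to $O_1$ lands at the vertex realizing $u\cdot O_1W_P$; in fact it lands at $uO_1W_P$ (ordinary group product), and $uO_1W_P=(u\cdot O_1)W_P$ only when the product $uO_1$ is reduced and a minimal length representative. You then buttress this with the chain $\ell(w)+\deg q-1=\ell(w)+\ell(O_1)=\ell(w\cdot O_1W_P)$ ``by Lemma \ref{Lem11}''; but Lemma \ref{Lem11} is about $z_1=(2<\cdots<k<\bar 1)$ (the even-symplectic curve-neighborhood element), not about $O_1$, and by Proposition \ref{prop:dictionary}(a) the equality $\ell(w\cdot O_1W_P)=\ell(w)+\ell(O_1)$ holds precisely when $\lambda^*$ exists, i.e.\ when $\lambda_k\geq 0$. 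The theorem is stated for \emph{all} $w$ with $w(1)=1$, including those with $\lambda_k=-1$, and in that case both the inclusion $X(w\cdot O_1W_P)\subseteq\Gamma_1^{(1)}(w)$ obtained from $u=w$ and the dimension pin-down break down. The paper's proof sidesteps this entirely: it sets $v:=(w\cdot O_1)O_1^{-1}$, uses \cite[Prop.~3.1]{buch.m:nbhds} to get $v\leq w$ and $vO_1=w\cdot O_1$ in $W$, and then translates the $T$-stable line through $\mathrm{id}$ by $v$ to obtain a line from $v\in X(w)$ directly to $(w\cdot O_1)W_P$; the opposite inclusion $\Gamma_1(X(w))\subset X(w\cdot O_1W_P)\cup X(w\cdot O_2W_P)$ is likewise obtained by translating $T$-stable lines and comparing $T$-fixed points, with no dimension count. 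This combinatorial trick works uniformly (whether or not $\lambda^*$ exists) and also removes the circular dependence your argument creates on the later birationality analysis in Theorem \ref{thm:gw1}.
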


\begin{proof} The existence of representatives in $W^{odd}$ follows from Lemma \ref{lemma:hclosure}. To prove the equality, since both sides are $B$-stable, it suffices to check they have the same $T$-fixed points. Using the action of $\Sp_{2n+1}$, a line passing through the Schubert point $X(id)$ can be translated so it contains any point in the closed orbit. In particular, a $T$-stable line guaranteed by Proposition \ref{zzzt},  joining $X(id)$ to $O_iW_P$ ($ i = 1,2$) is translated to one joining any $T$-fixed point $v \in X(w)$ to $v O_iW_P$. Since the minimal length representatives satisfy $v \le w$ it follows that $vO_i \le v \cdot O_i\le w \cdot O_i$, therefore $\Gamma_1(X(w)) \subset X(w \cdot O_1W_P) \cup X(w \cdot O_2W_P)$. For the converse inclusion we will consider only lines $L$ which intersect both $X(w)$ and the open orbit $X^\circ$ (those included in the closed orbit are already accounted by the equality $\Gamma_1^{(2)}(w) = X(w \cdot O_2 W_P)$). Let $v=(w \cdot O_1)O_1^{-1}$, where the products are performed in $W$. 
Then $v \le w$ by \cite[Prop.3.1]{buch.m:nbhds} and $v O_1 = w \cdot O_1$ in $W$. 
 If $L$ is the line joining $X(id)$ to $O_1$ in $\IG$ then $v. L$ joins $v \in X(w)$ to $vO_1 W_P = (w \cdot O_1) W_P \in X(w \cdot O_1W_P)$. This proves the required inclusion. 
\end{proof}
%
%\begin{figure}
%\caption{The moment graph of $\IG(2,5)$. The thick edges have degree $2$ and the rest have degree $1$. Each red vertex is in $X_c$. The edges with the same color leaving each red vertex $w$ are connecting to ${w \cdot O_1}$ or ${w \cdot O_2}$.}
%\begin{tikzpicture}[-,>=stealth',shorten >=1pt,auto,node distance=1.9cm,
%                    thick,main node/.style={draw,font=\sffamily\tiny\bfseries}]
%
%  \node[main node] (1) {$(\bar{3}<\bar{2})$};
%  \node[main node] (2) [below of=1] {$(3<\bar{2})$};
%    \node[main node] (3) [below left of=2] {$(2<\bar{3})$};
%  \node[main node] (4) [ red, below right of=2] {$(1<\bar{2})$};
%  \node[main node] (5) [below of=3]{$(2<3)$};
%  \node[main node] (6) [ red, below of=4]{$(1<\bar{3})$};
%   \node[main node] (7) [red, below right of=5]{$(1<3)$};
%   \node[main node] (8) [ red, below of=7]{$(1<2)$};
%  
%
%  \path[every node/.style={font=\sffamily\large}]
%    (8) edge node  {} (7)
%         edge node {} (5)
%          edge node {} (6)
%          edge [blue] node {} (3)
%          edge [blue] node {} (4)
%    (7) edge node {} (5)
%          edge [orange] node {} (2)
%          edge [orange] node {} (4) 
%          edge node {} (6)
%    (6) edge node [right] {} (3)
%          edge  [violet] node [right] {} (1)
%          edge  [violet] node [right] {} (4)
%    (5) edge  node {} (3)
%          edge [ultra thick] node {} (1)
%          edge node {} (2)
%    (4) edge node {} (2)      
%          edge  [cyan] node {} (1)
%    (3) edge  node {} (1)
%          edge [ultra thick] node [right] {} (2)
%    (2) edge node {} (1);
%\end{tikzpicture}
%\end{figure}
\begin{figure}[h!]
\caption{The moment graph of $\IG(2,5)$. The thick edges have degree $2$ and the rest have degree $1$. Each red vertex is in $X_c$. The blue edges leaving upwards from each red vertex $w$ are connected to ${w \cdot O_1}$ or ${w \cdot O_2}$.}
\begin{tikzpicture}[-,>=stealth',shorten >=1pt,auto,node distance=1.9cm,
                    thick,main node/.style={draw,font=\sffamily\tiny\bfseries}]

  \node[main node] (1) {$(\bar{3}<\bar{2})$};
  \node[main node] (2) [below of=1] {$(3<\bar{2})$};
    \node[main node] (3) [below left of=2] {$(2<\bar{3})$};
  \node[main node] (4) [ red, below right of=2] {$(1<\bar{2})$};
  \node[main node] (5) [below of=3]{$(2<3)$};
  \node[main node] (6) [ red, below of=4]{$(1<\bar{3})$};
   \node[main node] (7) [red, below right of=5]{$(1<3)$};
   \node[main node] (8) [ red, below of=7]{$(1<2)$};

  \path[every node/.style={font=\sffamily\large}]
    (8) edge node  {} (7)
         edge node {} (5)
          edge node {} (6)
          edge [blue] node {} (3)
          edge [blue] node {} (4)
    (7) edge node {} (5)
          edge [blue] node {} (2)
          edge [blue] node {} (4) 
          edge node {} (6)
    (6) edge node [right] {} (3)
          edge  [blue] node [right] {} (1)
          edge  [blue] node [right] {} (4)
    (5) edge  node {} (3)
          edge [ultra thick] node {} (1)
          edge node {} (2)
    (4) edge node {} (2)      
          edge  [blue] node {} (1)
    (3) edge  node {} (1)
          edge [ultra thick] node [right] {} (2)
    (2) edge node {} (1);
\end{tikzpicture}
\end{figure}
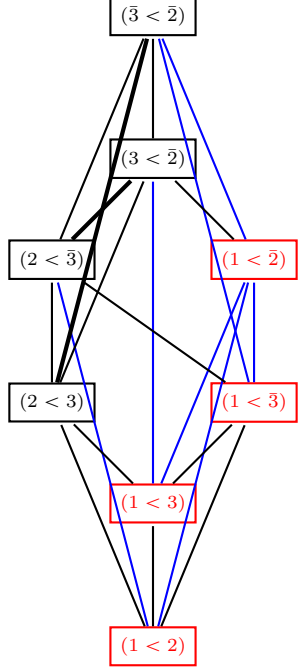

We record an immediate consequence of Lemma \ref{lemma:bound}, which gives necessary and sufficient conditions for the components of the curve neighborhood to have the expected dimension. 
\begin{lemma}\label{lemma:dim} Let $w \in W^P \cap W^{odd}$ such that $X(w) \subset X_c$, and let $z \in \{ O_1, O_2 \}$. Then $\dim X(w \cdot z W_P) - \dim X(w) \le \deg q -1$. Furthermore, the following are equivalent:

(i) $\dim X(w \cdot z W_P) - \dim X(w) = \deg q -1$;

(ii) $\ell (w \cdot z) = \ell(w) + \ell(z)$, $w \cdot z = wz$ and $w \cdot z$ is a minimal length representative in $W^{odd}$. \end{lemma}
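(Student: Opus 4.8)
The plan is to derive Lemma \ref{lemma:dim} as a direct application of Lemma \ref{lemma:bound}, using the key numerical fact $\ell(O_1) = \ell(O_2) = \deg q - 1$ established in Proposition \ref{zzzt} (and Corollary \ref{cor:evest} for $O_2$). First I would record that the Schubert varieties $X(w \cdot z W_P)$ have codimension measured by length, so that $\dim X(w \cdot z W_P) - \dim X(w) = \ell(w \cdot z W_P) - \ell(w)$ (both regarded in the relevant coset space). Since $z \in \{O_1, O_2\}$ is itself given as a minimal length representative with $\ell(z W_P) = \ell(z) = \deg q - 1$, the first part of Lemma \ref{lemma:bound} gives immediately $\ell(w \cdot z W_P) \le \ell(w) + \ell(z W_P) = \ell(w) + \deg q - 1$, which is exactly the claimed inequality $\dim X(w \cdot z W_P) - \dim X(w) \le \deg q - 1$.

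Next I would address the equivalence. The implication (i)$\Rightarrow$(ii): the equality in (i) translates to $\ell(w \cdot z W_P) = \ell(w) + \ell(z W_P)$, i.e. equality in the bound of Lemma \ref{lemma:bound}. Since $z \in W^P$ is a minimal length representative, part (i)$\Leftrightarrow$(ii) of Lemma \ref{lemma:bound} gives $\ell(w \cdot z) = \ell(w) + \ell(z)$ and $w \cdot z = wz$ is a minimal length representative in $W^P$. To upgrade ``minimal length representative in $W^P$'' to ``minimal length representative in $W^{odd}$'' I would invoke the hypothesis $X(w) \subset X_c$, i.e. $w(1) = 1$, together with Lemma \ref{lemma:hclosure}: since $w$ is odd-symplectic with $w(1) = 1$ and $z \in \{O_1, O_2\}$ is odd-symplectic, both $wz$ and $w \cdot z$ are odd-symplectic, so $w \cdot z \in W^P \cap W^{odd}$, which is what is meant by a minimal length representative in $W^{odd}$. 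Conversely (ii)$\Rightarrow$(i) is the easy direction: $\ell(w \cdot z) = \ell(w) + \ell(z)$ with $w \cdot z$ minimal in $W^P$ forces $\ell(w \cdot z W_P) = \ell(w \cdot z) = \ell(w) + \ell(z) = \ell(w) + \ell(z W_P)$, giving (i) after translating back to dimensions of Schubert varieties.

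I do not anticipate a genuine obstacle here; the statement is essentially a repackaging of Lemma \ref{lemma:bound} adapted to the odd-symplectic setting, and the only non-formal input is the closure property Lemma \ref{lemma:hclosure}, whose hypothesis ($w(1) = 1$) is precisely guaranteed by the assumption $X(w) \subset X_c$. The mildest subtlety to check carefully is the bookkeeping between lengths in $W$, lengths of cosets in $W/W_P$, and codimensions of Schubert varieties in $\IG$, and in particular that the relevant Schubert varieties $X(w \cdot z W_P)$ are taken inside $\IG$ (so that we are comparing dimensions consistently) — but this is routine given the conventions set up in Sections 2 and 3.

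\begin{proof} Throughout, dimensions and codimensions of Schubert varieties are measured by the length function, so that for $u \in W^P \cap W^{odd}$ one has $\dim X(u) = \ell(u)$ and $\dim X(w \cdot z W_P) = \ell(w \cdot z W_P)$, the latter being by definition $\ell(u')$ for the minimal length representative $u'$ of the coset $w \cdot z W_P$.

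By Proposition \ref{zzzt} (and Corollary \ref{cor:evest} for $O_2$), both $O_1$ and $O_2$ are minimal length representatives in $W^P$ with $\ell(O_i W_P) = \ell(O_i) = \deg q - 1$ for $i = 1, 2$. Applying the first part of Lemma \ref{lemma:bound} with $v = z \in \{ O_1, O_2 \}$ gives
\[ \ell(w \cdot z W_P) \le \ell(w) + \ell(z W_P) = \ell(w) + \deg q - 1 \/, \]
which is the asserted inequality $\dim X(w \cdot z W_P) - \dim X(w) \le \deg q - 1$.

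Now we prove the equivalence of (i) and (ii). Since $z \in W^P$ is a minimal length representative, Lemma \ref{lemma:bound} applies and tells us that the equality $\ell(w \cdot z W_P) = \ell(w) + \ell(z W_P)$ in (i) is equivalent to $\ell(w \cdot z) = \ell(w) + \ell(z)$ together with $w \cdot z = wz$ being a minimal length representative in $W^P$. It remains only to observe that, under the hypothesis $X(w) \subset X_c$, i.e. $w(1) = 1$, the element $w \cdot z$ is automatically odd-symplectic: indeed $w$ is odd-symplectic with $w(1) = 1$ and $z \in \{ O_1, O_2 \}$ is odd-symplectic, so Lemma \ref{lemma:hclosure} gives that both $wz$ and $w \cdot z$ are odd-symplectic permutations. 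Hence $w \cdot z \in W^P \cap W^{odd}$, i.e. it is a minimal length representative in $W^{odd}$. Translating the length equality $\ell(w \cdot z W_P) = \ell(w) + \ell(z W_P)$ back into dimensions of Schubert varieties yields $\dim X(w \cdot z W_P) - \dim X(w) = \deg q - 1$; conversely, this dimension equality gives back the length equality, and we have just seen this is equivalent to (ii). This completes the proof. \end{proof}
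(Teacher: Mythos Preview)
Your proof is correct and matches the paper's intended argument: the paper presents Lemma \ref{lemma:dim} without proof, describing it as ``an immediate consequence of Lemma \ref{lemma:bound}''. Your write-up fleshes out precisely this consequence, invoking Proposition \ref{zzzt} for $\ell(O_i) = \deg q - 1$ and Lemma \ref{lemma:hclosure} for the odd-symplectic closure property, exactly as the surrounding context of the paper indicates.
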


\section{Gromov-Witten invariants of lines}\label{s:gwlines} The main result of this section is the following:

\begin{thm}\label{thm:gw1} Let $X(w) \subset X_c$ be a Schubert variety in the closed orbit of $X$, and let $z \in \{ O_1 , O_2 \}$. Consider the restricted evaluation map \[ \ev_2: GW_1^{(i)}(w) \to X(w \cdot O_i) \quad  (i=1,2) \/. \] Then $\dim GW_1^{(i)}(w) \ge \dim X(w \cdot O_i W_P)$ with equality if and only if the restricted map $\ev_2$ is birational. In particular, the following holds:
\[ (\ev_2)_*[GW_1^{(i)}(w)]_T= \begin{cases} [X(w \cdot O_i)]_T & \textrm{if } w \cdot O_i \in W^P \cap W^{odd} \textrm{and } \ell(w \cdot O_i)= \ell(w) + \ell(O_i); \\ 0 & \textrm{ otherwise}\/. \end{cases} \] 
\end{thm}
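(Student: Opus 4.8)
The plan is to prove the dimension inequality first and then, under the equality hypothesis, establish birationality of the restricted evaluation map by a fiber-dimension argument on the moduli of lines. The dimension statement $\dim GW_1^{(i)}(w)\ge \dim X(w\cdot O_i W_P)$ follows because $\ev_2$ maps $GW_1^{(i)}(w)$ onto the irreducible Schubert variety $\Gamma_1^{(i)}(w)$, which by Theorem~\ref{thm:curveNBHDS} equals $X(w\cdot O_i W_P)$: a surjective morphism cannot decrease dimension. The ``if and only if'' then amounts to: when these dimensions are equal, $\ev_2$ is generically finite, and one must show its generic fiber is a single reduced point. Combined with Corollary~\ref{cor:GW} (each $GW_1^{(i)}(w)$ is generically smooth of expected dimension $\dim\Mb_{0,2}(\IG,1)-\mathrm{codim}\,X(w)$) and the computation $\dim X(w\cdot O_iW_P)-\dim X(w)\le \deg q-1$ from Lemma~\ref{lemma:dim}, the pushforward $(\ev_2)_*[GW_1^{(i)}(w)]_T$ is either $\deg(\ev_2)\cdot[X(w\cdot O_iW_P)]_T$ (when dimensions match) or $0$ (when $\dim\Gamma_1^{(i)}<\dim GW_1^{(i)}$, the drop being forced by Lemma~\ref{lemma:dim}(ii) failing). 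So the displayed formula reduces to: the degree is $1$ whenever the combinatorial equality $\ell(w\cdot O_i)=\ell(w)+\ell(O_i)$ holds.

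For the birationality, I would argue separately for $i=2$ (lines in the closed orbit) and $i=1$ (lines meeting the open orbit). For $i=2$: since $GW_1^{(2)}(w)=\xi^{-1}(A_2)$ parametrizes lines inside $X_c\cong \IG(k-1,2n)$, which is a homogeneous space, this is genuinely a Gromov-Witten variety for a cominuscule-type flag manifold, and birationality of $\ev_2$ over its curve neighborhood is exactly the content of the results of \cite{buch.m:nbhds} (all multiplicities $m_i=1$ for homogeneous spaces); so this case is essentially a citation. For $i=1$: I would use the explicit model $\mathcal{M}^\circ\cong \mathcal{F}\ell(1,2;\mathcal{S}_{k-1}^\perp/\mathcal{S}_{k-1})$ over $\IG(k-1,2n+1)^\circ$ together with the local coordinate description from the proof of Theorem~\ref{thm:pigeom}, to show that the general point of $X(w\cdot O_1W_P)$ — which lies in the open orbit — has exactly one line of the relevant type through it landing in $X(w)$. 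Concretely, over a general $V\in X(w\cdot O_1W_P)\cap X^\circ$, a line in $GW_1^{(1)}(w)$ through $V$ is determined by its kernel $V_{k-1}$ (a point of $\IG(k-1,2n+1)$ not containing $\e_1$) together with a one-dimensional extension; the condition that the line also passes through $X(w)$ should cut this down to a point, which one checks by the same triangular-system coordinate computation used for the fiber of $\pi$.

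The main obstacle I expect is precisely the $i=1$ birationality: unlike the homogeneous case, $\ev_2$ need not be birational a priori, and one must genuinely locate the general point of the target and count lines through it. The delicate part is that $X(w\cdot O_1W_P)$ meets both orbits, so one has to confirm the general point is in $X^\circ$ (true since the component $\Gamma_1^{(1)}$ is by construction not contained in $X_c$), and then carry out the local computation carefully enough to see the fiber is reduced of dimension $0$ — generic smoothness of $GW_1^{(1)}(w)$ from Corollary~\ref{cor:GW} plus generic reducedness of the target should give reducedness of the general fiber via generic flatness, but the $0$-dimensionality is the enumerative heart. I would also need to handle the degenerate possibility that $w\cdot O_i$ is not a minimal-length representative or falls outside $W^{odd}$: by Lemma~\ref{lemma:dim}(ii) this is exactly when $\dim\Gamma_1^{(i)}(w)<\dim GW_1^{(i)}(w)$, so $\ev_2$ has positive-dimensional generic fiber and the pushforward of the fundamental class vanishes for dimension reasons, giving the second line of the displayed formula. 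Assembling these, together with the observation that when $w\cdot O_i\in W^P\cap W^{odd}$ with $\ell(w\cdot O_i)=\ell(w)+\ell(O_i)$ one has $w\cdot O_i=wO_i$ and the equality of dimensions, completes the proof.
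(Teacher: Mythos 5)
Your overall architecture matches the paper's: the dimension inequality is immediate from surjectivity of $\ev_2$ onto the irreducible curve neighborhood, the $i=2$ case (lines inside the homogeneous closed orbit) is handled by citing the known homogeneous-space results (the paper uses Lemma~\ref{lemma:fibrec}, you cite \cite{buch.m:nbhds}; both are fine), the reducedness of the generic fiber comes from generic smoothness of the GW variety via Corollary~\ref{cor:GW} together with locally-trivial-fibration arguments, and the ``otherwise'' branch is forced by Lemma~\ref{lemma:dim} making $\ev_2$ have positive-dimensional fibers. All of that is correct and tracks the paper.

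The genuine gap is exactly where you flagged it: the $i=1$ birationality. Your proposal is to compute, in the local chart from the proof of Theorem~\ref{thm:pigeom}, that a general point of $X(w\cdot O_1 W_P)\cap X^\circ$ lies on a unique line of $GW_1^{(1)}(w)$. You did not carry this out, and it is not a routine continuation of the fiber computation for $\pi$: the constraint ``line meets $X(w)$'' is an \emph{incidence} condition against a Schubert variety, not a linear condition in those coordinates, and the coordinate chart for $\pi^{-1}(1.P)$ was designed around the source point $1.P$, not around a point in the open orbit. Making this enumerative count rigorous at a moving general point would require substantial extra work that you have not supplied. The paper's key idea, which is entirely absent from your proposal, is to sidestep all local computations by working at the specific $T$-fixed point $wO_1$ (legitimate, because $\ev_2$ is a $B$-equivariant locally trivial fibration over the open cell, so all fibers there are isomorphic). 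Two ingredients then finish it: first, a dimension count via Theorems~\ref{Lem10} and~\ref{thm:curveNBHDS} shows any line through $wO_1$ meeting $X(w)$ must in fact meet the \emph{open cell} $X(w)^\circ$, eliminating the boundary; second, given a putative second line $L$ through $wO_1$ and some $y\in X(w)^\circ$ with $y\neq w$, one flows $L$ under a generic one-parameter subgroup $\C^*\subset T$ and takes limits at $0$ and $\infty$ in the Hilbert scheme, producing two distinct $T$-stable lines through $wO_1$ and two distinct $T$-fixed points of $X(w)$ — contradicting the uniqueness of the $T$-stable line joining $X(id)$ to $O_1$. This degeneration argument replaces the explicit enumerative count you were hoping to do, and it is the missing idea you would need to close your proof.
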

\begin{proof} By definition $X(w \cdot O_i W_P) = \ev_2(GW_1^{(i)}(w))$, therefore the inequality on dimensions is immediate. In the case of equality it remains to prove the birationality statement. First observe that in this case $\dim X(w \cdot O_i W_P) = \dim X(w W_P) + \deg q-1$, and by Lemma \ref{lemma:dim} $w \cdot O_i$ is a minimal length representative satisfying $\ell( w\cdot O_i) = \ell(w) + \ell(O_i)$. Given this, we will drop $W_P$ from the notation. 

Recall from Corollary \ref{cor:GW} that $GW_1^{(i)}(w)$ is irreducible and generically smooth. Since the evaluation map $\ev_2$ is $B$-equivariant, \cite[Prop. 2.3]{BCMP:qkfin} implies that $\ev_2$ is a locally trivial fibration over the open cell $X(w \cdot O_i)^\circ$. The preimage $\ev_2^{-1}(X(w \cdot O_i)^\circ)$, being open and dense, intersects the smooth locus of $GW_1^{(i)}(w)$. Therefore all fibres over the open cell, which by hypothesis are discrete, must be reduced. To prove birationality it suffices to show that for some $x \in X({w \cdot O_i})^{\circ}$ there exists a unique line $L$ such that $x \in L$ and $L \cap X(w) \neq \emptyset$. If $i=2$ (when the GW variety parametrizes lines included in the closed orbit) this statement follows from Lemma \ref{lemma:fibrec}. We assume from now on that $i=1$. 

We consider the fibre over $x= wO_1 = w \cdot O_1$. This fibre contains the line $L_w$, obtained by $w$-translating the unique, $T$-stable, line joining $X(id)$ and $O_1$. 
If $X(v) \subset X(w) $ such that $v \neq w$ then \[ \dim \Gamma_1(X(w)) - \dim X(v) = \ell({w\cdot O_1})- \ell(v) = 2n+1 - k + (\ell(w) - \ell(v)) >2n+1-k \/. \] Then theorems \ref{Lem10} and \ref{thm:curveNBHDS} imply that there is no line joining $X(v)$ to the open cell $X(w O_1)^\circ$. We deduce that 
any line passing through $wO_1$ and $X(w)$ cannot intersect the boundary $X(w) \setminus X(w)^\circ$ of $X(w)$. 
%In particular, this proves that $L_w$ must be the unique $T$-stable line joining $w$ to $wO_1$. 
Let $L$ be any line such that $w O_1 \in L$ and $y \in L \cap X(w)^\circ$. If $y = w$ then $L= L_w $ is $T$-stable, so assume $y \neq w$; in particular $L$ is not $T$-stable. We show that existence of this $L$ leads to a contradiction. Consider a general $\C^* \subset T$ such that the $T$ and $\C^*$ fixed points in $\IG$ coincide. (Pick the $\C^*$ to be a regular $1$-parameter subgroup as in \cite[Ch. 24]{humphreys:linalggrps}.) 
A line $t.L$ in the (infinite) family of lines $\{t.L :t  \in \C^* \}$ contains $w \cdot O_1$ and it passes through $t.y \in X(w)^\circ$. The limits at $0$ and $\infty$ exist by the properness of the appropriate Hilbert scheme \cite[Prop. 3.9.8]{hartshorne}, and they correspond to two lines passing through two (distinct) $T$-fixed points $\lim_{t \to 0} t.y, \lim_{t \to \infty} t.y \in X(w)$. The two lines are necessarily $T$-stable, and this contradicts the uniqueness of $L_w$. 
\end{proof}

As a corollary, we can calculate the Chevalley GW invariants not covered by Theorem \ref{thm:vanishing}. 
Recall that $X(Div)$ denotes the Schubert divisor in $\IG$. 
\begin{cor} \label{cor:gw1}
Let $u,w \in W^P \cap W^{odd}$ such that $X(w) \subset X_c$. Then the Gromov-Witten invariant $\langle [X(Div)]_T, [X(w)]_T, [X(u)]_T^\vee\rangle_1 =1$ if $u=w O_i$ and $\ell(u) = \ell(w) + \ell(O_i)$, and it is equal to $0$ otherwise. \end{cor}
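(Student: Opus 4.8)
The plan is to derive Corollary \ref{cor:gw1} directly from the two-point Gromov-Witten formalism together with the structural results about line neighborhoods. First I would recall that, by the divisor axiom (\ref{E:divax}) and the fact that a line has degree $1$ against the Schubert divisor, one has
\[
\langle [X(Div)]_T, [X(w)]_T, [X(u)]_T^\vee \rangle_1 = \langle [X(w)]_T, [X(u)]_T^\vee \rangle_1 \/,
\]
so it suffices to compute the two-point invariant $\langle [X(w)]_T, [X(u)]_T^\vee \rangle_1$. By definition this is an integral over $\Mb_{0,2}(\IG,1)$ of $\ev_1^*[X(w)]_T \cup \ev_2^*[X(u)]_T^\vee$ against the virtual class, which by Proposition \ref{prop:vfund} is the honest fundamental class (the moduli space is smooth and irreducible for $r=2$). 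Using the projection formula and the fact that $\ev_1^{-1}(X(w)) = GW_1(w)$, this integral equals $\int^T_{\IG} [X(u)]_T^\vee \cap (\ev_2)_*[GW_1(w)]_T$, provided I first justify that $\ev_1^*[X(w)]_T \cap [\Mb_{0,2}(\IG,1)] = [GW_1(w)]_T$; this is where I would invoke the flatness of $\ev_1$ from Theorem \ref{thm:fibres}(a), which guarantees the Gromov-Witten variety has the expected codimension and is generically reduced, so its class is the pullback of $[X(w)]_T$.

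Next I would feed in the decomposition $GW_1(w) = GW_1^{(1)}(w) \cup GW_1^{(2)}(w)$ from Corollary \ref{cor:GW}, so that $(\ev_2)_*[GW_1(w)]_T = (\ev_2)_*[GW_1^{(1)}(w)]_T + (\ev_2)_*[GW_1^{(2)}(w)]_T$ (the components meet in lower dimension, so there is no correction). Theorem \ref{thm:gw1} then evaluates each pushforward: $(\ev_2)_*[GW_1^{(i)}(w)]_T$ equals $[X(w \cdot O_i)]_T$ when $w \cdot O_i \in W^P \cap W^{odd}$ and $\ell(w\cdot O_i) = \ell(w) + \ell(O_i)$, and is zero otherwise. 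Substituting back, $\langle [X(w)]_T, [X(u)]_T^\vee \rangle_1 = \sum_{i} \int^T_\IG [X(u)]_T^\vee \cap [X(w \cdot O_i)]_T$ where the sum runs over those $i \in \{1,2\}$ for which the nonvanishing condition holds. By the defining property of the Poincar\'e dual basis, $\int^T_\IG [X(u)]_T^\vee \cap [X(v)]_T = \delta_{u,v}$, so the whole expression collapses to the number of indices $i$ with $u = w \cdot O_i$ and $\ell(u) = \ell(w) + \ell(O_i)$.

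Finally I would observe that this count is at most $1$: if both $u = w\cdot O_1$ and $u = w\cdot O_2$ held with the length-additivity, then $w \cdot O_1$ and $w \cdot O_2$ would be the same minimal length representative, forcing $O_1 W_P = O_2 W_P$, which is false since $O_1 = (2<3<\cdots<k<\overline{k+1})$ and $O_2 = (1<3<4<\cdots<k<\overline{2})$ are distinct elements of $W^P$ (for instance $O_2(1)=1$ but $O_1(1)=2$). Hence the invariant is $1$ exactly when $u = w O_i$ with $\ell(u) = \ell(w) + \ell(O_i)$ for (necessarily exactly) one $i$, and $0$ otherwise, which is the assertion of the corollary. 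The main obstacle I anticipate is the bookkeeping at the first step: making precise that the equivariant pushforward $(\ev_2)_*(\ev_1^*[X(w)]_T \cap [\Mb_{0,2}(\IG,1)])$ really is $(\ev_2)_*[GW_1(w)]_T$ with no hidden multiplicities or excess-intersection contributions — this rests squarely on the flatness and generic reducedness established in Theorem \ref{thm:fibres} and Corollary \ref{cor:GW}, together with the identification of the virtual class with the fundamental class from Proposition \ref{prop:vfund}. Once that identification is granted, the remainder is a clean formal computation.
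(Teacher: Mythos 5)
Your proposal reconstructs the paper's own argument almost verbatim: reduce to a two-point invariant via the divisor axiom, use Proposition~\ref{prop:vfund} to replace the virtual class by the fundamental class, invoke flatness of $\ev_1$ (Theorem~\ref{thm:fibres} plus Proposition~\ref{prop:forget}) and generic reducedness of the two components (Corollary~\ref{cor:GW}) to write $\ev_1^*[X(w)]_T = [GW_1^{(1)}(w)]_T + [GW_1^{(2)}(w)]_T$, then apply Theorem~\ref{thm:gw1} and Poincar\'e duality. Your additional check that at most one index $i$ can satisfy $u = wO_i$ with length additivity (so the count cannot be $2$) is a small point the paper leaves implicit, and it is correct since $O_1 \neq O_2$ in $W^P$.
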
 

\begin{proof} As in the proof of Theorem \ref{thm:vanishing} we obtain
\[ \langle [X(Div)]_T, [X(w)]_T, [X(u)]_T^\vee\rangle_1 = \int_{\IG(k, 2n+1)}^T (\ev_2)_* ( \ev_1^*[X(w)]_T) \cap [X(u)]_T^\vee \/. \] (We omitted the virtual class, since for $d=1$ this is the actual fundamental class.) By Theorem \ref{thm:fibres} and Proposition \ref{prop:forget}, the evaluation map $\ev_1$ is flat. Then by Corollary \ref{cor:GW}, \[ \ev_1^*[X(w)]_T = [\ev_1^{-1}(X(w))]_T = [GW_1^{(1)}(w)]_T + [GW_1^{(2)}(w)]_T \/. \] Then the result follows from Theorem \ref{thm:gw1} and Poincar{\'e} duality. 
\end{proof}

The previous corollary together with Theorem \ref{thm:vanishing} give the quantum terms in the equivariant quantum Chevalley formula for $X$. Recall that the Chevalley formula is given by \[[X(Div)]_T \star [X(w)]_T = \sum_{d \ge 0; u \in W^{2n+1}} c_{Div, w}^{u,d} q^d [X(u)]_T \/,\] where $c_{Div, w}^{u,d}$ is a homogeneous polynomial of degree $\mathrm{codim}~ X(w)  +1 - (\mathrm{codim} ~X(u) + d \deg q)$. The terms when $d=0$ (i.e. the non-quantum, equivariant coefficients) can be obtained from Mihai's work \cite{mihai:odd}. Those for $d >0$ are listed below. We remark that these coefficients were also calculated by Pech for $\QH^*(\IG(2, 2n+1))$ \cite{pech:quantum} and they were conjectured in few cases for $\QH^*(\IG(3, 2n+1))$ \cite{pech:thesis}. 

\begin{thm}\label{thm:eqchev} Let $u,w \in W^P \cap W^{odd}$ and $d>0$. The equivariant quantum Chevalley coefficients $c_{Div,w}^{u,d}=0$ for $d\ge 2$ or if $w(1) \neq 1$ (i.e. $X(w) \nsubseteq X_c$). If $d=1$ and $w(1)=1$ then \[ c_{Div,w}^{u,1} = \begin{cases} 1 & \textrm{if } u=w O_i \textrm{ and }\ell(u) = \ell(w) + \ell(O_i) \textrm{ for } i=1,2; \\ 0 & \textrm{otherwise} \/. \end{cases} \]
\end{thm}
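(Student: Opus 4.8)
The plan is to deduce Theorem \ref{thm:eqchev} as a direct corollary of the vanishing and enumerative results already established, essentially just re-packaging Theorem \ref{thm:vanishing} and Corollary \ref{cor:gw1}. First I would recall that the Chevalley coefficient $c_{Div,w}^{u,d}$ is by definition the three-point equivariant Gromov--Witten invariant $\langle [X(Div)]_T, [X(w)]_T, [X(u)]_T^\vee \rangle_d$; this is how the product was defined in Section 4. So the statement to prove is entirely a statement about these invariants, and both halves are already available.

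For the vanishing part, I would invoke Theorem \ref{thm:vanishing}: that theorem says the invariant $\langle [X(Div)]_T, [X(w)]_T, [X(u)]_T^\vee \rangle_d$ vanishes whenever $d \ge 2$, and also when $d = 1$ but $X(w)$ is not contained in the closed orbit $X_c$. Since $X(w) \subset X_c$ is equivalent to $w(1) = 1$ (as recorded after Proposition \ref{prop:schubert}), this immediately gives $c_{Div,w}^{u,d} = 0$ for $d \ge 2$, and $c_{Div,w}^{u,1} = 0$ when $w(1) \neq 1$. For the remaining case $d = 1$ and $w(1) = 1$, I would simply cite Corollary \ref{cor:gw1}, which states precisely that $\langle [X(Div)]_T, [X(w)]_T, [X(u)]_T^\vee \rangle_1 = 1$ if $u = w O_i$ and $\ell(u) = \ell(w) + \ell(O_i)$ for $i=1$ or $i=2$, and $0$ otherwise. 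One small point to address is that there could in principle be overlap between the two cases $i=1$ and $i=2$ — i.e. whether $wO_1$ and $wO_2$ (when both satisfy the length-additivity condition) could coincide, producing a coefficient of $2$ rather than $1$. I would note that $O_1$ and $O_2$ are distinct minimal-length representatives of distinct cosets (they correspond to the two different $\Sp_{2n+1}$-orbits, and indeed $\ell$-computations or the partition description in Section \ref{s:partitions} separate them), so $wO_1 W_P \neq wO_2 W_P$ whenever both products are reduced, and no doubling occurs.

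There is really no main obstacle here — the theorem is a bookkeeping corollary. The only thing requiring a sentence of care is the degree bookkeeping: one should observe that the homogeneity of $c_{Div,w}^{u,d}$ forces the polynomial degree $\mathrm{codim}\, X(w) + 1 - \mathrm{codim}\, X(u) - d\deg q$ to be zero whenever the invariant is nonzero (since the GW invariant landing in $H^*_T(pt)$ has that degree, and Corollary \ref{cor:gw1} produces the constant $1$), so the condition $\ell(u) = \ell(w) + \ell(O_i)$ together with $\ell(O_i) = \deg q - 1$ (from Proposition \ref{zzzt}) is exactly the statement that this degree vanishes — consistent, and no contradiction arises. Thus the proof is: cite Theorem \ref{thm:vanishing} for the vanishing claims, cite Corollary \ref{cor:gw1} for the $d=1$, $w(1)=1$ case, remark that $wO_1$ and $wO_2$ cannot coincide so the coefficient is genuinely $1$ and not $2$, and conclude.
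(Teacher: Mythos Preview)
Your proposal is correct and matches the paper's approach exactly: the paper presents Theorem \ref{thm:eqchev} without a formal proof, simply noting that ``the previous corollary [Corollary \ref{cor:gw1}] together with Theorem \ref{thm:vanishing} give the quantum terms.'' Your additional remark that $wO_1W_P \neq wO_2W_P$ (since $X(w\cdot O_1 W_P)$ meets the open orbit while $X(w\cdot O_2 W_P) \subset X_c$) is a useful clarification the paper leaves implicit.
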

In the next section we will rewrite this formula in terms of partitions.

\section{Equivariant Quantum Chevalley Rule with $(n-k)$-strict partitions}\label{s:partitions} The goal of this section is to give an explicit formulation of the equivariant quantum Chevalley formula using partitions. 

\subsection{A dictionary permutations - partitions}\label{ss:dictionary} In this section we introduce a variant of Buch, Kresch and Tamvakis $k$-strict partitions \cite{BKT2}. This variant, due to Pech \cite{pech:quantum}, is convenient to describe the cohomology of the odd-symplectic Grassmannian $X=\IG(k, 2n+1)$. Recall that if $P_k$ is the maximal parabolic subgroup of $\Sp_{2n+2}$ determined by the simple root $\alpha_k$, then the minimal length representatives $W^{P_k}$ have the form $(w(1)<w(2)<\cdots<w(k))$. Consider the set of partitions $\lambda=(2n+2-k \geq \lambda_1 \geq \cdots \geq \lambda _k \geq 0)$ which are  {\em $(n+1-k)$-strict}, i.e. $\lambda_j>\lambda_{j+1}$ whenever $\lambda_j>n+1-k$. We denote this set by $\Lambda^{2n+2}_k$. There is a bijection between $\Lambda^{2n+2}_k$ and the set $W^{P_k}$ of minimal length representatives given by:
\begin{eqnarray*}
\lambda &\mapsto& w \mbox{ is defined by }w(j)=2n+3-k-\lambda_j+\#\{ i<j : \lambda_i+\lambda_j \leq 2(n+1-k)+j-i\},\\
w &\mapsto& \lambda \mbox{ is defined by }\lambda_j=2n+3-k-w(j)+\#\{i<j: w(i)+w(j)>2n+3\} \/. 
\end{eqnarray*}
{\color{black} See \cite[Proposition 4.3]{BKT2}.} Recall that the minimal length representative of the element $w_0$ defined in (\ref{E:w0}) indexes $\IG$ as a Schubert variety inside $\IG(k, 2n+2)$. Under the bijection above, the coset of $w_0 W_{P_k}$ corresponds to the $(n+1-k)$-strict partition $1^k:= (1,1, \ldots , 1)$ if $k < n+1$ and to $(k,0,\ldots , 0 )$ if $k=n+1$. The minimal length representatives for odd symplectic permutations $w \in W^{odd}$ are in bijection with the subset of $\Lambda^{2n+2}_k$ consisting of those $(n+1-k)$-strict partitions satisfying the additional condition that if $\lambda_k = 0$ then $\lambda_1 = 2n+2 - k$; in other words, if the first column is not full, then the first row must be full.\begin{footnote} {One word of caution: the Bruhat order does not translate into partition inclusion. For example, $(2n+2-k, 0, \ldots , 0) \le (1, 1, \ldots 1) $ in the Bruhat order for $k < n+1$.}\end{footnote}  P{e}ch introduced an equivalent indexing set, which is more convenient in the context of the odd-symplectic Grassmannians: \[ \Lambda := \{ \lambda =  (2n+1-k \geq\lambda_1 \geq \cdots \geq \lambda_k \geq -1): \lambda \textrm{ is } n-k \textrm{-strict}, \textrm{ if } \lambda_k=-1 \textrm{ then } \lambda_1=2n+1-k \} \/. \] Pictorially, the partitions in $\Lambda$ are obtained by removing the full first column $1^k$ from the partitions in $\Lambda^{2n+2}_k$, regardless of whether a part equal to $0$ is present.
\begin{example}

Let $k=5$, $n=7$, and $w=(1<6<\bar{8}<\bar{7}<\bar{2}|3<4<5) \in W^{odd}$. Then $\lambda=(\lambda_1 \geq \lambda_2 \geq \lambda_3 \geq \lambda_4 \geq \lambda_5) \in \Lambda^{2n+2}$ is given by $\lambda = (11, 6, 3,3,0)$ and the corresponding partition in $\Lambda$ is $(10,5,2,2,-1)$.
%\lambda_1=11, \lambda_2=6, \lambda_3=3, \lambda_4=3, \lambda_5=0.\] The corresponding partition in $\Lambda$ is \[(\lambda_1-1 \geq \lambda_2-1 \geq \lambda_3-1\geq \lambda_4-1\geq \lambda_5-1)=(10 \geq 5 \geq 2 \geq 2 \geq -1).\]
Pictorially, \[ \tableau{6}{&{}&{}&{}&{}&{}&{}&{}&{}&{}&{}&{}\\&{}&{}&{}&{}&{}&{}\\&{}&{}&{}\\ &{}&{}&{}}   - \tableau{6}{{}&\\ {}&\\ {}&\\ {}&\\ {}} = \tableau{6}{&{}&{}&{}&{}&{}&{}&{}&{}&{}&{}\\&{}&{}&{}&{}&{}\\&{}&{}\\ &{}&{} \\ {}&&&&& && } \]   
%
%\begin{equation*}
%\Yvcentermath1
%\yng(11,6,3,3,0)   -  \yng(1,1,1,1,1)=
%\young(:\strut \strut \strut \strut \strut \strut \strut \strut \strut \strut ,:\strut \strut \strut \strut \strut,:\strut \strut,:\strut \strut,\strut)  
%\end{equation*}
\end{example}
\begin{example} Let $k= n+1 =5$, so $\IG(5,9) \simeq \IG(4,8)$ is the Lagrangian Grassmannian. Then the codimension $0$ class is the $-1$-strict partition $\lambda = (4,-1,-1,-1,-1)= \tableau{5}{&{}&{}&{}&{}\\{}\\{}\\{}\\{}} $.\end{example}

For $\lambda \in \Lambda$ define $|\lambda|=\lambda_1+\ldots+\lambda_k$. If $w$ corresponds to $\lambda$ then $\ell(w)=k(2n+1-k)-\frac{k(k-1)}{2}-|\lambda|$, i.e. the codimension of the Schubert variety $X(w)$ in $X$ equals $|\lambda|$; see \cite[Proposition 4.4]{BKT2} and \cite[Section 1.1.1]{pech:quantum}. The partitions associated to the elements $O_1$ and $O_2$ from Proposition \ref{zzzt} are: \begin{equation}\label{E:lambdaO} \begin{split} \lambda(O_1) = (2n-k \geq 2n- k- 1 \geq 2n-k -2 \geq ...\geq 2n-2k+ 2 \geq 0) \/; \\ \lambda(O_2) = (2n-k +1 \geq 2n- k -1 \geq ... \geq 2n-2k+2  \geq -1) \/ . \end{split}
\end{equation} A Schubert variety $X(w)$ is included in the closed orbit $X_c \subset \IG$ if its partition $\lambda \in \Lambda$ satisfies $\lambda_1 = 2n+1-k$. In order to translate the conditions from Lemma \ref{lemma:dim} in terms of partitions we need the following definition.
\begin{defn}\label{def:lambda*} Let $\lambda = (\lambda_1, \ldots , \lambda_k)$ be a partition in $\Lambda$ such that $\lambda_1 = 2n+1-k$.

(a) If $\lambda_k \geq 0$ then let $\lambda^{*}=(\lambda_2 \geq \lambda_3 \geq \cdots \geq \lambda_k \geq 0)$. If $\lambda_k=-1$ then $\lambda^{*}$ does not exist.

(b) If $\lambda_2=2n-k$ then let $\lambda^{**}=(\lambda_1 \geq \lambda_3 \geq \cdots \geq \lambda_k \geq -1)$. If $\lambda_2<2n-k$ then $\lambda^{**}$ does not exist. 
\end{defn}
In both situations notice that $|\lambda^*| = |\lambda^{**}| =  |\lambda| -( 2n+1 - k)$. As an example, if $\rho = (2n-k+1, 2n-k, \ldots , 2n-2k+2)$ is the partition indexing the Schubert point, then 
$\lambda(O_1) = \rho^{*}$ and $\lambda(O_2) = \rho^{**}$. It is easy to produce examples when only one of $\lambda^*$ or $\lambda^{**}$ exist. For instance, if $k=3,n=4$, and $\lambda = (6,5, -1)$ then $\lambda^*$ does not exist, but $\lambda^{**} = (6, -1, -1)$; if $\lambda = (6,3,0)$ then $\lambda^* = (3,0,0)$ and $\lambda^{**}$ does not exist.

%\begin{example}
%
%\begin{example} Let $n=4$ and $k=3$. \begin{itemize} If $\lambda=(6,5,1)$ the quantum terms are indexed by $\lambda^*= (5,1,0)$ and $\lambda^{**} = (6,1,-1)$. The corresponding diagrams, and two additional examples are exhibited in the table below. 
%
%\begin{center}
%\begin{tabular}{ l || c | r }
%  			
%  $\lambda$ & $\lambda^*$  & $\lambda^{**}$ \\  
%  \hline \hline & & \\
%\yng(6,5,1) & \yng(5,1,0) & \young(:\strut \strut \strut \strut \strut \strut,:\strut,\strut) \\ 
%\hline  & & \\
%\young(:\strut \strut \strut \strut \strut \strut,:\strut \strut \strut \strut \strut,\strut) & & \young(:\strut \strut \strut \strut \strut \strut,\strut,\strut)  \\ 
%\hline & &\\
%  \yng(6,3,0) & \yng(3,0,0) & \\ \hline
%\end{tabular}
%\end{center}
%\end{example}
%

\begin{prop}\label{prop:dictionary} Let $w \in W^{P_k} \cap W^{odd}$ such that $w(1) =1$ and let $w \mapsto \lambda= (2n+1-k, \lambda_2, \ldots , \lambda_k)$ be the partition in $\Lambda$ corresponding to $w$. The following hold:

(a) The partition $\lambda^*$ exists if and only if $wO_1$ is a minimal length representative in $W^{odd}$. If any of these conditions is satisfied then $w O_1 \mapsto \lambda^{*}$, thus in particular $\ell(w O_1) = \ell(w)+ \ell(O_1)$.

(b) The partition $\lambda^{**}$ exists if and only if $wO_2$ is a minimal length representative in $W^{odd}$ and $\ell(w O_2) = \ell(w)+ \ell(O_2)$. In this case $w O_2 \mapsto \lambda^{**}$.
\end{prop}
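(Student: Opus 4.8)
The plan is to compute $wO_1$ and $wO_2$ as honest products of signed permutations and then push everything through the explicit dictionary $W^{P_k}\leftrightarrow\Lambda^{2n+2}_k$ of \S\ref{ss:dictionary}. Throughout I take $k<n+1$ for part (a) (when $k=n+1$ the open orbit is empty, $\IG$ is the Lagrangian Grassmannian, and only the $O_2$-part is relevant); part (b) is uniform in $k$. Writing $O_1,O_2$ as full signed permutations gives $O_1=(2,3,\ldots,k,\overline{k+1}\mid 1,k+2,\ldots,n+1)$ and $O_2=(1,3,4,\ldots,k,\overline 2\mid k+1,\ldots,n+1)$, the entries after ``$\mid$'' listing the remaining values of $\{1,\ldots,n+1\}$ increasingly. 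Since $w(1)=1$ and $w$ has increasing first $k$ entries, evaluating $(wO_i)(j)=w(O_i(j))$ yields
\[ wO_1=(w(2),w(3),\ldots,w(k),\overline{w(k+1)}\mid 1,w(k+2),\ldots,w(n+1)), \]
\[ wO_2=(1,w(3),w(4),\ldots,w(k),\overline{w(2)}\mid w(k+1),\ldots,w(n+1)). \]
In both cases the entries after ``$\mid$'' are already increasing and $\le n+1$, and no entry equals $\overline 1=2n+2$ (that would force $w(k+1)=1$, resp.\ $w(2)=1$, contradicting $w(1)=1$); so the only obstruction to $wO_i$ being a minimal length representative lying in $W^{odd}$ is a failure of its first $k$ entries to increase. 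Hence $wO_1$ is such a representative iff $w(k)+w(k+1)\le 2n+2$, and $wO_2$ iff $w(2)+w(k)\le 2n+2$.

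For part (a) the first step is to identify $w(k)+w(k+1)\le 2n+2$ with $\lambda_k\ge 0$, which by Definition~\ref{def:lambda*}(a) is exactly the existence of $\lambda^*$. Through the dictionary, $\lambda_k=2n+2-k-w(k)+\#\{i<k:\ w(i)+w(k)>2n+3\}$; splitting into the cases $w(k)\le n+1$ and $w(k)\ge n+2$ and using that $w(k+1)=\min\bigl(\{1,\ldots,n+1\}\setminus\{|w(1)|,\ldots,|w(k)|\}\bigr)$, a short count of how the barred and unbarred parts of $(w(1),\ldots,w(k))$ sit inside $\{1,\ldots,w(k+1)-1\}$ shows $\lambda_k\ge 0\iff w(k+1)<\overline{w(k)}\iff w(k)+w(k+1)\le 2n+2$. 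Assuming this holds, I would then read off the partition of $wO_1$: for the first $k-1$ rows the correction terms coincide with those of $w$ shifted by one index (the index $i=1$ never contributes, because $w(1)=1$ makes $1+w(j)\le 2n+2$), so $\lambda_j(wO_1)=\lambda_{j+1}$; for the last row the same inequality forces no barred $w(j)$ to lie below $w(k+1)$, which makes the relevant count maximal and gives $\lambda_k(wO_1)=0$. Hence $\lambda(wO_1)=(\lambda_2,\ldots,\lambda_k,0)=\lambda^*$; since $|\lambda^*|=|\lambda|-(2n+1-k)$ and codimension equals $|\cdot|$, this immediately gives $\ell(wO_1)=\ell(w)+(2n+1-k)=\ell(w)+\ell(O_1)$, so no separate length check is needed.

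For part (b) the extra length clause is genuinely necessary, and the argument is the same idea with one twist. From the dictionary and $w(1)=1$ one gets $\lambda_2=2n+2-k-w(2)$, so by Definition~\ref{def:lambda*}(b), $\lambda^{**}$ exists iff $\lambda_2=2n-k$ iff $w(2)=2$. If $w(2)=2$ then $2$ is among the $|w(i)|$, so $\overline 2=2n+1$ is not a value of $w$, hence $w(k)\le 2n$ and $w(2)+w(k)\le 2n+2$ automatically — so $wO_2$ is automatically a minimal length representative in $W^{odd}$. Computing its partition as above gives $\lambda_1(wO_2)=2n+1-k$, $\lambda_j(wO_2)=\lambda_{j+1}$ for $2\le j\le k-1$, and $\lambda_k(wO_2)=w(2)-3$; when $w(2)=2$ this last part is $-1$, so $\lambda(wO_2)=(\lambda_1,\lambda_3,\ldots,\lambda_k,-1)=\lambda^{**}$ and, as $|\lambda^{**}|=|\lambda|-(2n+1-k)$, we get $\ell(wO_2)=\ell(w)+\ell(O_2)$. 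Conversely, if $\lambda^{**}$ does not exist then $w(2)\ge 3$; either $w(2)+w(k)>2n+2$ and $wO_2$ is not a minimal representative, or $wO_2$ is a minimal representative with $\lambda_k(wO_2)=w(2)-3\ge 0$, in which case $|\lambda(wO_2)|=|\lambda|-(2n+1-k)+2(w(2)-2)>|\lambda|-(2n+1-k)$ and hence $\ell(wO_2)<\ell(w)+\ell(O_2)$. In either case at least one of the two conditions of (b) fails, so the equivalence holds.

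The main obstacle is the combinatorial bookkeeping hidden in the two ``short counts'' above: translating the correction terms $\#\{i<k:\ w(i)+w(k)>2n+3\}$ and $\#\{i<k:\ (wO_1)(i)>w(k+1)\}$ through the dictionary requires separating the entries $w(1)<\cdots<w(k)$ into their unbarred part (the $w(j)\le n+1$) and barred part (the $w(j)\ge n+2$) and matching both against the complement $\{1,\ldots,n+1\}\setminus\{|w(1)|,\ldots,|w(k)|\}$ that determines $w(k+1)$. This is where sign errors are easy; once these counts are pinned down, the rest of the proof is formal.
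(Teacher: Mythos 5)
Your proof follows essentially the same route as the paper's: compute $wO_1$ and $wO_2$ explicitly as signed permutations, characterize minimality of the representative via the failure of the $k$-th entry to be small enough, and push everything through the dictionary to identify the resulting partitions with $\lambda^*$ and $\lambda^{**}$. The one stylistic difference is that the paper introduces the auxiliary parameter $j$ (where $w=(1,2,\ldots,j,w(j+1),\ldots,w(k)\mid\cdots)$ with $w(j+1)>j+1$) and runs its case analysis through the dichotomy ``$w(k)=\overline{j+1}$ and $w(k+1)\ge j+2$'' versus ``$w(k)\le\overline{j+2}$ and $w(k+1)=j+1$,'' whereas you phrase the same dichotomy as $w(k)+w(k+1)\le 2n+2$ vs.\ $>2n+2$ and compress the associated counts. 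This is a cosmetic reparametrization, not a different argument; the verification that $\lambda_k\ge 0\iff w(k)+w(k+1)\le 2n+2$ (which you defer to a ``short count'') is exactly the calculation the paper performs with $j$ explicit, so to make the writeup self-contained you would still need to carry out that count. Everything you assert, including the observation that the length equality in (a) is automatic from $|\lambda^*|=|\lambda|-(2n+1-k)$ while in (b) it is a genuine extra condition, matches the paper.
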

\begin{proof} By Lemma \ref{lemma:hclosure} $w O_i \in W^{odd}$ so one only needs to check the claims about minimal length representatives. Let $w= (1 < 2 < \ldots < j < w(j+1) < \ldots < w(k) | w(k+1) < \ldots < w(n+1))$ where $j \geq 1$, $w(j+1) > j+1$ and $w(n+1) \leq n+1$, since $w$ is a minimal length representative; this last condition is omitted if $k = n+1$. Notice that either $w(k) = \overline{j+1}$ and $j+2 \leq w(k+1)$, or $w(k) \leq \overline{j+2}$ and $w(k+1) = j+1$. By the definition of $O_1$ and $O_2$, we have \[ \begin{split} wO_1 = (w(2), w(3), \ldots ,  w(k), \overline{w(k+1)}|1, w(k+2),  \ldots , w(n+1)) \/; \\ wO_2 = (1, w(3), \cdots , w(k), \overline{w(2)}|w(k+1),  \cdots w(n+1)) \/,  \end{split} \] as elements in $W$. Therefore $wO_1$ is not a minimal length representative if and only if $w(k) > \overline{w(k+1)}$, i.e. $w(k) = \overline{j+1}$ and $j+2 \leq w(k+1)$. Similarly, $wO_2 \notin W^{P_k}$ if and only if $w(k) > \overline{w(2)}$. 

We now proceed to prove the statement (a). If $\lambda^*$ exists but $w(k+1) \neq j+1$, then the preceding considerations imply that $w(k) = \overline{j+1}$. Using the bijection $W^{P_k} \cap W^{odd} \to \Lambda$, we calculate
\begin{eqnarray*}
\lambda_k&=&2n+2-k-\overline{j+1}+\#\{i<k:w(i)+\overline{j+1}>2n+3 \}\\
&=&j-k+\#\{i<k:w(i)>j+1 \}\\
&=&j-k+(k-1-j)\\
&=&-1.
\end{eqnarray*}
This contradicts that $\lambda_k \geq 0$. Therefore $w(k) \leq \overline{j+2}$ and $w(k+1) = j+1$, which means that $wO_1 \in W^{P_k}$. Conversely, if $wO_1$ is a minimal length representative, let $wO_1 \mapsto \mu$ under the bijection $W^{P_k} \cap W^{odd} \to \Lambda$. Then for $1 \leq s \leq k-1$, \[ \begin{split} \mu_s &=  2n+2-k-wO_1(s)+ \# \{ i <s: wO_1(i) + wO_1(s) > 2n+3 \} \\ &= 2n+2 - k - w(s+1) + \# \{ i <s: w(i+1) + w(s+1) > 2n+3 \} \/. \end{split} \] Since $w(1) =1$, $ \# \{ i <s: w(i+1) + w(s+1) > 2n+3 \} =  \# \{ i <s+1: w(i) + w(s+1) > 2n+3 \}$, thus $\mu_s = \lambda_{s+1}$. We calculate $\mu_k$ separately: \begin{eqnarray*}
\mu_k&=&2n+2-k-wO_1(k)+\#\{i<k:wO_1(i)+wO_1(k)>2n+3 \}\\
& = & 2n+2-\overline{j+1}+\#\{i<k:w(i+1)+\overline{j+1}>2n+3 \} \\
&=&j-k+\#\{i<k:w(i+1)>j+1 \}\\
&=&j-k+(k-j)\\
&=&0.
\end{eqnarray*} Then $\mu = \lambda^*$, and in particular the length condition is satisfied.  

We now prove (b). We first observe that $\lambda_2 = 2n+2 - k - w(2)$. Then $\lambda^{**}$ exists if and only if $\lambda_2 = 2n-k$, i.e. $w(2) = 2$. Then clearly $w(k)< \overline{w(2)}$, therefore $wO_2 \in W^{P_k}$. Let $wO_2 \mapsto \mu$. As before we calculate $\mu_1 = 2n+1-k$, $\mu_{s} = \lambda_{s+1}$ for $2 \leq s \leq k-1$, and that $\mu_k=w(2)-3 = -1$. This proves one implication. For the converse, we notice that once $wO_2 \in W^{P_k}$, same calculations show that $\mu_1 = 2n+1-k$ and that $\mu_s = \lambda_{s+1}$ for $2 \leq s \leq k-1$ (the condition $w(2) =2$ is not used in these). The length condition on $\ell(wO_2)$ implies that $|\lambda| - |\mu| = 2n+1-k$, which forces $\mu_k=-1$, thus $\mu = \lambda^{**}$ and the proposition is proved.\end{proof}

\subsection{The equivariant quantum Chevalley formula} To formulate the equivariant quantum Chevalley formula we will first recall the (non-quantum) equivariant Chevalley formula for $\IG$. This is due to P{e}ch \cite{pech:thesis}, but for the convenience of the reader we briefly recall the main steps. (P{e}ch works in the non-equivariant setting, and a minor argument is needed for the equivariant extension.) In a nutshell, P{e}ch uses the embedding $\iota: \IG \to \IG(k, 2n+2)$ to reduce the calculation to the Chevalley formula in $H^*(\IG(k, 2n+2))$. Since $\IG(k, 2n+2)$ is homogeneous, the classical work of Chevalley \cite{Chevalley}, and its equivariant generalization (see e.g. \cite{KK}), give this formula with Schubert classes indexed by Weyl group representatives. Buch, Kresch and Tamvakis \cite{BKT2} proved a more general (non-equivariant) Pieri rule, and in the process re-stated the formulas in terms of strict partitions.  

In this section we will use the notation $X(\lambda)$ to denote the Schubert variety in $\IG$ and $Y(\lambda+1)$ to denote the same Schubert variety, but now regarded in $\IG(k, 2n+2)= \Sp_{2n+2}/P$. The notation is consistent with the fact that the partitions in $\Lambda^{2n+2}_k$ are obtained from the ``odd-symplectic partitions" $\lambda \in \Lambda$ by adding one box to each row. Set $X(1)$ respectively $Y(1)$ to be the Schubert divisors in $\IG$ and in $\IG(k, 2n+2)$. P{e}ch proved that in $H^*(\IG)$ there is an equality $\iota^*[Y(1)] = [X(1)]$. Therefore in the equivariant cohomology $\iota^*[Y(1)]_T = [X(1)]_T + C(t)$, where $C(t)\in H^2_T(pt)$ is a homogeneous linear form. After localization at the point $w_0W_P$ (the torus-fixed point in the open Schubert cell in $\IG$), and using that $w_0 W_P \notin X(1)$ we obtain that $C(t) = \iota_{w_0}^*[Y(1)]_{T}$, where $\iota_{w_0}^*$ is the localization map. For the next result, let $\overline{w_0}$ denote the longest element in $W$.
\begin{lemma}\label{lemma:loc} Let $w \in W$ be a signed permutation. Then the localization coefficient $\iota_w^* [Y(1)]_T = \overline{w_0} (\omega_k) - w (\omega_k)$. In particular, $C(t)=\iota_{w_0}^*[Y(1)]_{T}$ equals \[ C(t) = \begin{cases} t_{k+1} - t_1 & \textrm{ if } k < n+1; \\ - 2 t_1 & \textrm{ if } k = n+1 \/. \end{cases} \] \end{lemma}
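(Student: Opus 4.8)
The plan is to realize $[Y(1)]_T$ as the equivariant first Chern class of the ample generator of $\mathrm{Pic}(\IG^{ev})$, corrected by a scalar in $H^2_T(pt)$, and then to pin that scalar down by localizing at the $T$-fixed point of the big cell.

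First, since $\IG^{ev}=\Sp_{2n+2}/P_k$ has Picard rank one, its ample generator is the line bundle $\mathcal{L}:=\mathcal{L}_{\omega_k}$ attached to the fundamental weight $\omega_k$; with the sign conventions of \cite[\S 8]{buch.m:nbhds} one has $c_1^T(\mathcal{L})|_{wW_P}=-w(\omega_k)$ for every $w\in W$ (this is well defined on cosets since $\omega_k$ is $W_{P_k}$-invariant), while $c_1(\mathcal{L})=[Y(1)]$ non-equivariantly. Because $H^*_T(\IG^{ev})$ is a free $H^*_T(pt)$-module on the equivariant Schubert basis, in degree two the kernel of the forgetful map $H^2_T(\IG^{ev})\to H^2(\IG^{ev})$ is exactly $H^2_T(pt)\cdot 1$; hence there is a unique $C\in H^2_T(pt)$ with $c_1^T(\mathcal{L})=[Y(1)]_T+C\cdot 1$.

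Next I would restrict this equality to the fixed point $u_0W_P$, where $u_0$ denotes the longest element of $W^P$. This is the unique fixed point of $\IG^{ev}$ lying in the open dense Schubert cell, whose complement is the unique Schubert divisor $Y(1)$ (for Picard rank one the boundary of the big cell is irreducible of codimension one, indexed by the single element of $\Delta\setminus\Delta_{P_k}$). Thus $[Y(1)]_T|_{u_0W_P}=0$, so $C=c_1^T(\mathcal{L})|_{u_0W_P}=-u_0(\omega_k)$. Since $\omega_k$ is $W_{P_k}$-fixed and $\overline{w_0}$ factors as $u_0$ times the longest element of $W_{P_k}$, we get $u_0(\omega_k)=\overline{w_0}(\omega_k)$ (in type $C_{n+1}$ both simply equal $-\omega_k$). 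Substituting back,
\[
\iota_w^*[Y(1)]_T=c_1^T(\mathcal{L})|_{wW_P}-C=\overline{w_0}(\omega_k)-w(\omega_k),
\]
which is the first assertion.

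The explicit value of $C=\iota_{w_0}^*[Y(1)]_T$ then follows by specializing $w$ to the element $w_0$ of (\ref{E:w0}), using $\omega_k=t_1+\cdots+t_k$ and $\overline{w_0}(\omega_k)=-\omega_k$: for $k<n+1$ the signed permutation $w_0$ sends $t_i\mapsto t_{\overline{i+1}}=-t_{i+1}$ for $1\le i\le k$, giving $\overline{w_0}(\omega_k)-w_0(\omega_k)=t_{k+1}-t_1$; for $k=n+1$ it fixes $t_1$ and negates $t_2,\dots,t_{n+1}$, giving $-2t_1$. The only subtle point is the bookkeeping — keeping $w_0$ of (\ref{E:w0}), $u_0$ (longest in $W^P$) and $\overline{w_0}$ (longest in $W$) apart, and matching the sign convention for $c_1^T(\mathcal{L}_{\omega_k})$ to the one used in the paper; once that is settled the computation is routine, the substantive inputs being only the standard identification of the Schubert divisor with the ample line bundle and the localization formula for its Chern class.
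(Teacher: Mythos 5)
Your proof is correct, and it takes a genuinely different route from the paper's. The paper conjugates by the left-multiplication automorphism $\varphi_{\overline{w_0}}$ to transport the $B$-stable Schubert divisor $Y(1)$ to the opposite ($B^-$-stable) Schubert divisor $\overline{Y}(s_k)$, applies the Kostant--Kumar localization formula there, and then twists back by $\varphi^*_{\overline{w_0}}$ (which acts on $H^*_T(pt)$ by $\overline{w_0}$), giving $\overline{w_0}(\omega_k)-w(\omega_k)$. You instead realize $[Y(1)]_T$ as $c_1^T(\mathcal{L}_{\omega_k})$ minus an unknown scalar $C\in H^2_T(pt)$ and pin down $C$ by restricting to the big-cell fixed point $u_0 W_P$, where $[Y(1)]_T$ vanishes. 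What the paper's argument buys is that it only uses the standard $\iota^*_v[\overline{Y}(s_k)]_T=\omega_k-v(\omega_k)$ and the transformation law for Schubert classes under $\varphi_{\overline{w_0}}$; what your argument buys is that it avoids invoking $B^-$-Schubert classes entirely, at the cost of needing the equivariant Chern class restriction for $\mathcal{L}_{\omega_k}$ and the freeness of $H^*_T(\IG^{ev})$ as an $H^*_T(pt)$-module (to ensure the correction term is a scalar). Both are comparable in length and rely on the same final arithmetic with $w_0$; your observation that $u_0(\omega_k)=\overline{w_0}(\omega_k)$ (because $\omega_k$ is $W_{P_k}$-fixed) correctly bridges the big-cell fixed point to the longest element of $W$, matching the statement.
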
 
\begin{proof} Let $\varphi_{\overline{w_0}}: \IG(k, 2n+2) \to \IG(k, 2n+2)$ be the left multiplication by $\overline{w_0}$. This is an automorphism of $\IG(k, 2n+2)$ which is equivariant with respect to the map $T \to T$ given by $ t \mapsto \overline{w_0} t \overline{w_0}^{-1}$. There is a commutative diagram \[ \xymatrix{ \IG(k, 2n+2) \ar[r]^{\varphi_{\overline{w_0}}} & \IG(k, 2n+2) \\ \{ w \} \ar[u]^{\iota_{w}} \ar[r]^{\varphi_{\overline{w_0}}} & \{ \overline{w_0} w \} \ar[u]^{\iota_{\overline{w_0} w}} }  \]

For $v \in W^P$ let $\overline{Y}(v)$  denote the Schubert variety which is stable under $B_{2n+2}^-$, the opposite Borel subgroup $B_{2n+2}$; then $\overline{Y}(v)$ has codimension $\ell(v)$. The morphism $\varphi_{\overline{w_0}}$ induces a ring isomorphism $\varphi_{\overline{w_0}}^*\in Aut( H^*_{T}(\IG(k, 2n+2)))$ which satisfies $\varphi^*_{\overline{w_0}}[\overline{Y}(\overline{w_0} v W_P)]_{T} = [Y(v W_P)]_{T}$, and it acts on  $H^*_{T}(pt)$ by twisting by $\overline{w_0}$. Since in our situation $[Y(1)]_{T} = \varphi^*_{\overline{w_0}}[\overline{Y}(s_k)]_{T}$ we deduce that \[ \begin{split}  \iota_{w}^*[Y(1)]_T =  \iota_{w}^* \varphi^*_{\overline{w_0}} [\overline{Y}( s_k)]_{T} = \varphi^*_{\overline{w_0}} \iota^*_{\overline{w_0}w} [\overline{Y}( s_k)]_{T}  =  \varphi^*_{\overline{w_0}} (\omega_k - \overline{w_0}w(\omega_k)) = \overline{w_0} (\omega_k) - w (\omega_k)  \/. \end{split} \] 
The third equality follows from localization formulas of Schubert classes for opposite Borel subgroups, see e.g.~\cite{KK}. The claim on $C(t)$ follows from taking into account the expression for $w_0$ from (\ref{E:w0}), that $\omega_k = t_1 + \ldots + t_k$, and that $\overline{w_0} = (\bar{1} , \ldots , \overline{n+1})$.\end{proof}

%
%\[ w_0 = \begin{cases} (\bar{2}, \bar{3}, \ldots , \overline{n+1}, 1) & \textrm{ if } k < n+1; \\ (1 , \bar{2}, \ldots , \overline{n+1}) & \textrm{ if } k = n+1 \/. \end{cases} \]
%
%\begin{proof}[Proof 2] Consider the $Sp_{2n+2}$-equivariant line bundle $\mathcal{L}_{-\omega_k}$ which has weight $-\omega_k$ at $1.P$. The equivariant Chern class of this bundle satisfies $c_1^{T}(\mathcal{L}_{-\omega_k}) = [Y(1)]_{T_{2n+2}} + A(T)$, where $A(T)$ is a linear homogeneous form. By localizing at $\overline{w_0} \notin Y(1)$ we obtain that $A(T) = -\overline{w_0}(\omega_k)$, thus \[   [Y(1)]_{T_{2n+2}} = 
%c_1^{T}(\mathcal{L}_{-\omega_k}) +  \overline{w_0}(\omega_k) \/. \] Then we localize at $w_0$ to obtain \[ \iota_{w_0}^*[Y(1)]_{T_{2n+2}} = \iota_{w_0}^* (c_1^{T}(\mathcal{L}_{-\omega_k})) + \overline{w_0}(\omega_k) = - w_0(\omega_k) + \overline{w_0} (\omega_k) \/.\]
%\end{proof} 

Consider the expansions \begin{equation}\label{E:expansion} \begin{split} [X(1)]_T \cup [X(\lambda)]_T & = \sum_{\mu \in \Lambda} c_{(1), \lambda}^\mu [X(\mu)]_T \in H^*_T(X) \/; \\ [Y(1)]_T \cup [Y(\lambda+1)]_T & = \sum_{\mu \in \Lambda} \tilde{c}_{(1), \lambda}^\mu [Y(\mu+1)]_T \in H^*_{T_{2n+2}}(\IG(k, 2n+2)) \/, \end{split} \end{equation} where $c_{(1), \lambda}^\mu, \tilde{c}_{(1), \lambda}^\mu \in H^*_T(pt)$. Notice that $\iota_*[X(\lambda)]_T = [Y(\lambda+1)]_T$ therefore the product $[Y(1)]_T \cup [Y(\lambda+1)]_T$ will only contain cohomology classes supported on $\IG$. We apply $\iota_*$ to both sides of (\ref{E:expansion}) and the projection formula to obtain \[ \begin{split} \iota_* ( [X(1)]_T \cup [X(\lambda)]_T) = &  \iota_*( (\iota^*[Y(1)]_T - C(t)) \cup [X(\lambda)]_T) \\ = & ([Y(1)]_T  - C(t)) \cup [Y(\lambda+1)]_T \\ = & [Y(1)]_T \cup [Y(\lambda+1)]_T - C(t) [Y(\lambda+1)]_T \end{split} \] It follows from this and Lemma \ref{lemma:loc} that {\color{black} \begin{equation}\label{E:LR} c_{(1), \lambda}^\mu = \begin{cases} \tilde{c}_{(1), \lambda}^\mu & \textrm{ if } \lambda \neq \mu \/; \\ \tilde{c}_{(1), \lambda}^\lambda -C(t)  = w_0 (\omega_k) - w_\lambda (\omega_k) & \textrm{ if } \lambda= \mu \/, \end{cases} \end{equation}}where $w_\lambda \in W$ is any permutation such that $w W_P$ corresponds to $\lambda$. Notice in particular that if $\lambda \neq \mu$, the coefficients $\tilde{c}_{(1), \lambda}^\mu$ are non-negative integers. We recall next the formula for these integers obtained in \cite{BKT2}.
\begin{defn} \label{DEFN:Related} Represent $\lambda \in \Lambda^{2n+2}_k$ as a Young diagram. The box in row $r$ and column $c$ of $\lambda$ is $(n+1-k)${\em ~-~ related} to the box in row $r'$ and column $c'$ if \[|c-n+k-2|+r=|c'-n+k-2|+r'. \] 
Given $\lambda, \mu \in \Lambda^{2n+2}_k$ with $\lambda \subset \mu$, the skew diagram $\mu / \lambda$ is called a horizontal strip (resp. vertical) strip if it does not contain two boxes in the same column (resp. row).

Following \cite[Definition 1.3]{BKT2} we say $ \lambda \rightarrow \mu$ for any $n+1-k$-strict partitions $\lambda, \mu$ if $\mu$ can be obtained by removing a vertical strip from the first $n+1-k$ columns of $\lambda$ and adding a horizontal strip to the result, so that
\begin{enumerate}
\item if one of the first $n+1-k$ columns of $\mu$ has the same number of boxes as the same column of $\lambda$, then the bottom box of this column is $n+1-k$-related to at most one box of $\mu \backslash \lambda$; and
\item if a column of $\mu$ has fewer boxes than the same column of $\lambda$, the removed boxes and the bottom box of $\mu$ in this column must each be $n+1-k$-related to exactly one box of $\mu \backslash \lambda$, and these boxes of $\mu \backslash \lambda$ must all lie in the same row.
\end{enumerate}
If $\lambda \rightarrow \mu$, we let $\mathbb{A}$ be the set of boxes of $\mu \backslash \lambda$ in columns $n+2-k$ through $2n+1-k$ which are not mentioned in (1) or (2). Then define $N(\lambda, \mu)$ to be the number of connected components of $\mathbb{A}$ which do not have a box in column $n+2-k$. Here two boxes are connected if they share at least a vertex.
\end{defn}
We refer to \cite{BKT2} for examples of these coefficients.
% \begin{example}
%Let $n=4$ and $k=3$. We will fill each box of the full staircase $\lambda=(6 \geq 5 \geq 4)$ with $|c-n+k-2|+r$. So we have \[\young(321234,43234,5434).\] Boxes that share the same number are $n+1-k$-related. Pictorially, the related boxes are situated on diagonals which are ``mirror symmetric" with respect to a vertical axis on the $n-k+2$ column.
%\end{example}
%\begin{example}
%Let $n=k=2$ and $\lambda=(2 \geq 1)$ and $\mu=(3 \geq 1)$. The partition $\mu$ is obtained by adding a box to the first row $\lambda$. So $\lambda \rightarrow \mu$. Filling the boxes of $\lambda$ with $|c-n+k-2|+r$ yields \[\young(21,3)\]  and filling the boxes of $\mu$ with $|c-n+k-2|+r$ yields \[\young(212,3).\] Since the box in position $(r,c)=(1,3)$ is not mentioned in (1) or (2) of Definition \ref{DEFN:Related} we have that $\mathbb{A}$ contains one element. Thus, $N(\lambda,\mu)=1$. One can also see \cite{BKT2} for an example.
%\end{example}
Combining theorem \ref{thm:eqchev}, proposition \ref{prop:dictionary}, and equation (\ref{E:LR}) above, together with the formulation of the Chevalley rule for $H^*(\IG(k, 2n+2))$ obtained in \cite[Theorem 1.1]{BKT2} yields the equivariant quantum Chevalley formula. To shorten notation we set $A(\lambda, \mu):= N(\lambda+1, \mu+1)$. 
\begin{thm}\label{thm:eqqchev} Let $\lambda \in \Lambda$ be an $n-k$ strict partition. Then the following equality holds in the equivariant quantum cohomology ring $\QH^*_T(\IG(k, 2n+1))$:
\begin{eqnarray}\label{E:eqqchev}\begin{split} 
[X(1)]_T \star [X(\lambda)]_T & = \left( \sum 2^{A(\lambda,\mu)} [X(\mu)]_T \right) +\left( w_0(\omega_k) - w_\lambda(\omega_k) \right) [X(\lambda)]_T \\ & \quad +  q[X(\lambda^*)]_T+q[X(\lambda^{**})]_T \/, \end{split} \end{eqnarray}
where the first sum is over partitions $\mu \in \Lambda$ such that $\lambda +1  \to \mu +1$ and $|\mu|= |\lambda|+1$, and where $w_\lambda (j)=2n+2-k-\lambda_j+\# \{i<j: \lambda_i+\lambda_j \leq 2(n-k)+j-i\}$.
When $\lambda^*$ or $\lambda^{**}$ do not exist then the corresponding quantum term is omitted.
\end{thm}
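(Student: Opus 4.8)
The proof will be an assembly of the permutation-level results already in hand, transported through the permutation--partition dictionary of \S\ref{ss:dictionary}; I treat the $q$-terms and the $q$-free terms separately.

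For the quantum terms, the starting point is Theorem \ref{thm:eqchev}: all coefficients $c^{u,d}_{Div,w}$ with $d\ge 2$ vanish, and a coefficient $c^{u,1}_{Div,w}$ can be nonzero only if $w(1)=1$, i.e.\ only if $X(\lambda)\subset X_c$, which in the $\Lambda$-indexing means $\lambda_1=2n+1-k$; in that case $c^{u,1}_{Div,w}=1$ exactly when $u=wO_i$ with $\ell(u)=\ell(w)+\ell(O_i)$, $i\in\{1,2\}$. Proposition \ref{prop:dictionary}(a) says that $wO_1$ is a minimal length representative in $W^{odd}$ --- equivalently $\ell(wO_1)=\ell(w)+\ell(O_1)$ --- precisely when $\lambda^{*}$ exists, and then $wO_1\mapsto\lambda^{*}$; Proposition \ref{prop:dictionary}(b) says that $wO_2$ is such a representative with $\ell(wO_2)=\ell(w)+\ell(O_2)$ precisely when $\lambda^{**}$ exists, and then $wO_2\mapsto\lambda^{**}$. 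Hence the degree-$1$ part of $[X(1)]_T\star[X(\lambda)]_T$ is exactly $q[X(\lambda^{*})]_T+q[X(\lambda^{**})]_T$, with a term omitted whenever the partition is undefined; and when $\lambda_1<2n+1-k$ neither $\lambda^{*}$ nor $\lambda^{**}$ is defined (Definition \ref{def:lambda*}), so the omission convention matches the vanishing in Theorem \ref{thm:eqchev}.

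For the classical ($q$-free) terms, I will use the reduction around equation (\ref{E:LR}). In $H^*_T(\IG)$ the product $[X(1)]_T\cup[X(\lambda)]_T$ has Schubert coefficients $c^\mu_{(1),\lambda}$, and by (\ref{E:LR}) these equal the equivariant Chevalley coefficients $\tilde c^\mu_{(1),\lambda}$ of the homogeneous space $\IG(k,2n+2)$ for $\mu\neq\lambda$, whereas the diagonal coefficient is $\tilde c^\lambda_{(1),\lambda}-C(t)=w_0(\omega_k)-w_\lambda(\omega_k)$ by (\ref{E:LR}) and Lemma \ref{lemma:loc} (here $w_0$ is as in (\ref{E:w0}) and $\omega_k$ is the $k$-th fundamental weight, which is $W_{P_k}$-fixed, so $w_\lambda(\omega_k)$ depends only on the coset $w_\lambda W_{P_k}$). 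The equivariant Chevalley formula for $\IG(k,2n+2)$ produces, besides the diagonal term, only an $H^0$-coefficient combination of Schubert classes $[Y(\mu+1)]_T$ with $Y(\mu+1)$ covering $Y(\lambda+1)$; those coefficients are exactly the non-equivariant Chevalley coefficients computed in \cite[Theorem 1.1]{BKT2}, namely $\tilde c^\mu_{(1),\lambda}=2^{N(\lambda+1,\mu+1)}$ whenever $\lambda+1\to\mu+1$ and $|\mu+1|=|\lambda+1|+1$ (equivalently $|\mu|=|\lambda|+1$), and $0$ otherwise. With $A(\lambda,\mu):=N(\lambda+1,\mu+1)$ this gives the first sum in (\ref{E:eqqchev}). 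Finally, $w_\lambda$ is the minimal length representative of the coset in $W^{P_k}$ attached to $\lambda+1\in\Lambda^{2n+2}_k$, and substituting $(\lambda+1)_j=\lambda_j+1$ into the bijection formula of \S\ref{ss:dictionary} rewrites it as $w_\lambda(j)=2n+2-k-\lambda_j+\#\{i<j:\lambda_i+\lambda_j\le 2(n-k)+j-i\}$; combining with the quantum part yields (\ref{E:eqqchev}).

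No individual step is deep: the real work is bookkeeping. The main obstacle is keeping the three conventions straight --- minimal length representatives in $W^{P_k}$ versus $W^{odd}$, partitions in $\Lambda^{2n+2}_k$ versus the first-column-deleted partitions in $\Lambda$, and the one-box-per-row shift $\lambda\leftrightarrow\lambda+1$ that mediates between them --- and making sure that the non-existence of $\lambda^{*}$, $\lambda^{**}$ is matched precisely with the failure of length additivity for $wO_1$, $wO_2$. One should also record, as in the discussion before (\ref{E:expansion}), that $\iota_*$ identifies the relevant span of Schubert classes, so that comparing coefficients after pushing forward is legitimate, and that the divisor appearing in \cite[Theorem 1.1]{BKT2} is $Y(1)$, whose pullback is $X(1)$ by Lemma \ref{lemma:degq}(a).
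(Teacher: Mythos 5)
Your proof is correct and follows essentially the same route as the paper: it combines Theorem \ref{thm:eqchev} and Proposition \ref{prop:dictionary} for the quantum terms, and equation (\ref{E:LR}) together with Lemma \ref{lemma:loc} and the classical Chevalley formula of \cite[Theorem 1.1]{BKT2} (reindexed by $\lambda\mapsto\lambda+1$) for the $q$-free terms; the paper gives precisely this assembly, only more tersely. One small caution: for $O_1$ the implication runs from ``$wO_1$ is a minimal length representative in $W^{odd}$'' \emph{to} ``$\ell(wO_1)=\ell(w)+\ell(O_1)$'' (via $wO_1\mapsto\lambda^*$ and $|\lambda^*|=|\lambda|-(2n+1-k)$), rather than being formally equivalent a priori --- but since the proposition establishes both directions of the needed equivalence with ``$\lambda^*$ exists,'' your argument goes through unchanged.
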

\begin{example} Consider the Schubert class indexed by $[X(6,2,1)]_T \in \QH^*_T(\IG(5,11))$.
The permutation corresponding to $\lambda = (6,2,1)$ is $w = (1,5,6,\bar{4},\bar{3}|2)$. Then 
\[[X(1)]_T \star [X(6,2,1)]_T=-(t_1+t_2+2t_5+2t_6) [X(6,2,1)]_T+2[X(6,3,1)]_T+q[X(2,1)]_T. \]
More examples can be found in section \ref{s:examples}.
\end{example}
\begin{remark}\label{rmk:pos} By Kleiman-Bertini theorem, the GW invariants for homogeneous spaces are enumerative; cf.~\cite{FP}.~There is an equivariant version of positivity \cite{graham:positivity, mihalcea:positivity} which states that (quantum) equivariant multiplication of $B-$stable Schubert classes yields structure constants which are polynomials in positive simple roots with (weakly) {\em negative} coefficients. Both the ordinary and equivariant positivity statements hold for the coefficients of the Chevalley formula (\ref{E:eqqchev}). Since $\IG$ is not a homogeneous space, one expects that in general positivity will fail. Based on theorem \ref{thm:alg} below we calculated that in $\QH^*_T(\IG(2,5))$, 
\[ [X(3,-1)]_T \star [X(3,-1)]_T=(t_1^2-t_3^2)[X(3,-1)]_T+(t_2+t_3)[X(3)]_T +[X(3,1)]_T -q \] thus 
$c_{(3,-1),(3,-1)}^{(0),1} = -1$ (this coefficient was also calculated by Pech \cite{pech:thesis}) and $c_{(3,-1),(3,-1)}^{(3),0} = t_2 + t_3$. The last coefficient fails the expected equivariant positivity. The full multiplication table in $\QH^*_T(\IG(2,5)$, containing more such examples, can be found in section \ref{ss:qht25}.
\end{remark}

%\begin{example}
%Consider $n=4$ and $k=3$. For $\lambda=(6,5,1)$ the quantum terms are indexed by $\lambda^*= (5,1,0)$ and $\lambda^{**} = (6,1,-1)$. The corresponding diagrams, and two additional examples are exhibited in the table below. 
%
%%\begin{center}
%%\begin{tabular}{ l || c | r }
%%  			
%%  $\lambda$ & $\lambda^*$  & $\lambda^{**}$ \\  
%%  \hline \hline & & \\
%%\yng(6,5,1) & \yng(5,1,0) & \young(:\strut \strut \strut \strut \strut \strut,:\strut,\strut) \\ 
%%\hline  & & \\
%%\young(:\strut \strut \strut \strut \strut \strut,:\strut \strut \strut \strut \strut,\strut) & & \young(:\strut \strut \strut \strut \strut \strut,\strut,\strut)  \\ 
%%\hline & &\\
%%  \yng(6,3,0) & \yng(3,0,0) & \\ \hline
%%\end{tabular}
%%\end{center}
%\end{example}

\section{Application: an algorithm for the structure constants of $\QH_T^*(\IG(k,2n+1))$}\label{s:alg} One of the main applications of the equivariant quantum Chevalley formula is a recursive algorithm calculating the structure constants in the equivariant quantum cohomology ring $\QH^*_T(\IG)$. This is possible despite the fact that the divisor class does not generate the ring.\begin{footnote}{But the Schubert divisor generates the ring $\QH^*_T(\IG)$ {\em localized} at the equivariant parameters. We refer to \cite[\S 5]{BCMP:eqkt} for details.}\end{footnote} The key is that the extra equivariant parameters introduce sufficient rigidity to allow for a recursive formula. Similar algorithms, in various levels of generality, were obtained in \cite{okounkov,molev.sagan,knutson.tao} in relation to equivariant cohomology of Grassmannians. These were generalized for equivariant quantum cohomology and equivariant quantum K theory of flag manifolds in \cite{mihalcea:eqschub,mihalcea:eqqhom,BCMP:eqkt}. Although the odd-symplectic Grassmannian is not homogeneous, the shape of the equivariant quantum Chevalley formula is almost identical to the one for the Grassmannian. In particular, the non-quantum terms are governed by the Bruhat order, there are no ``mixed terms" (i.e. no terms which contain both equivariant and quantum coefficients), and there are two quantum terms with coefficient $1$ (in the Grassmannian case, there is just one such term). Therefore it should not be a surprise that almost the same algorithm as the one from \cite{mihalcea:eqschub} extends to this case, with essentially the same proof. We present next the precise results, while indicating the salient points in their proofs, but we shall leave it to the reader to check the details. 

We need to introduce few additional notations. For a partition $\lambda \in \Lambda$ there is at most one partition $\lambda^+ \in \Lambda$ such that $(\lambda^+)^* = \lambda$. Similarly, there exists at most one partition $\lambda^{++}$ such that $(\lambda^{++})^{**} = \lambda$.

\begin{prop}\label{prop:recursion} Let $\lambda, \mu, \nu \in \Lambda$ and $d \in H_2(X)$ a non-negative degree. The structure constant $c_{\lambda,\mu}^{\nu,d}$ satisfy the following equation:
\begin{equation}\label{E:rec} \begin{split} (w_\nu(\omega_k) - w_\lambda(\omega_k))c_{\lambda,\mu}^{\nu,d} & = \sum_\eta 2^{A(\lambda, \eta)} c_{\eta, \mu}^{\nu,d} - \sum_\xi 2^{A(\xi, \nu)}c_{\lambda, \mu}^{\xi,d} \\ & \quad  + (c_{\lambda^*, \mu}^{\nu, d-1} - c_{\lambda, \mu}^{\nu^+, d-1}) + (c_{\lambda^{**}, \mu}^{\nu, d-1} - c_{\lambda, \mu}^{\nu^{++}, d-1}) \/, \end{split} \end{equation} where $w_\lambda \in W$ is the partition corresponding to $\lambda$, the first sum is over $\eta \in \Lambda$ such that $\lambda+1 \to \eta +1$ and $|\eta| = |\lambda| + 1$, and the second sum is over $\xi \in \Lambda$ such that $\xi+1 \to \nu+1$ and $|\xi| = |\nu| - 1$; the terms involving $\lambda^*, \lambda^{**}, \nu^+, \nu^{++}$ are omitted if the corresponding partition does not exist. \end{prop}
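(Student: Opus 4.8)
The plan is to derive the recursion (\ref{E:rec}) by computing the associativity-type identity obtained from expanding $([X(1)]_T \star [X(\lambda)]_T) \star [X(\mu)]_T$ in two different ways, exactly as in \cite{mihalcea:eqschub,mihalcea:eqqhom}. First I would apply the equivariant quantum Chevalley formula of Theorem \ref{thm:eqqchev} to the factor $[X(1)]_T \star [X(\lambda)]_T$, then multiply the result by $[X(\mu)]_T$, and extract the coefficient of $q^d [X(\nu)]_T$. This yields the left-hand side $\bigl(w_\lambda(\omega_k)\bigr) c_{\lambda,\mu}^{\nu,d}$ plus $\sum_\eta 2^{A(\lambda,\eta)} c_{\eta,\mu}^{\nu,d} + c_{\lambda^*,\mu}^{\nu,d-1} + c_{\lambda^{**},\mu}^{\nu,d-1}$, with the convention that terms involving $\lambda^*$ or $\lambda^{**}$ are dropped when those partitions do not exist.

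Next I would compute the same product in the other order, $[X(1)]_T \star \bigl([X(\lambda)]_T \star [X(\mu)]_T\bigr)$: expand $[X(\lambda)]_T \star [X(\mu)]_T = \sum_{\xi,e} q^e c_{\lambda,\mu}^{\xi,e}[X(\xi)]_T$, then apply the Chevalley formula to each $[X(1)]_T \star [X(\xi)]_T$, and again read off the coefficient of $q^d[X(\nu)]_T$. Here the equivariant diagonal term $w_0(\omega_k) - w_\xi(\omega_k)$ contributes, when $\xi = \nu$, the term $\bigl(w_0(\omega_k)-w_\nu(\omega_k)\bigr)c_{\lambda,\mu}^{\nu,d}$; the classical off-diagonal terms contribute $\sum_\xi 2^{A(\xi,\nu)} c_{\lambda,\mu}^{\xi,d}$ with $\xi+1 \to \nu+1$ and $|\xi| = |\nu|-1$; and the two quantum terms $q[X(\xi^*)]_T$, $q[X(\xi^{**})]_T$ contribute, after re-indexing $\xi^* = \nu$ (so $\xi = \nu^+$) and $\xi^{**} = \nu$ (so $\xi = \nu^{++}$), the terms $c_{\lambda,\mu}^{\nu^+,d-1}$ and $c_{\lambda,\mu}^{\nu^{++},d-1}$, again omitted when the relevant partition does not exist. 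Equating the two expansions and using associativity and commutativity of $\QH^*_T(\IG)$, the terms $w_0(\omega_k)c_{\lambda,\mu}^{\nu,d}$ cancel against the $w_0(\omega_k)$-part coming implicitly from normalizing the diagonal Chevalley coefficient on the left, and rearranging produces precisely (\ref{E:rec}).

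The routine but slightly delicate bookkeeping is matching the diagonal equivariant coefficients: on the left-hand side Theorem \ref{thm:eqqchev} gives the diagonal term as $\bigl(w_0(\omega_k)-w_\lambda(\omega_k)\bigr)[X(\lambda)]_T$, while on the right it is $\bigl(w_0(\omega_k)-w_\nu(\omega_k)\bigr)[X(\nu)]_T$; after cancelling the common $w_0(\omega_k)$ contributions one is left with the factor $w_\nu(\omega_k)-w_\lambda(\omega_k)$ multiplying $c_{\lambda,\mu}^{\nu,d}$, as in the statement. The main obstacle — though it is conceptual rather than computational — is justifying that this single scalar recursion, together with the initial data $c_{\lambda,\mu}^{\nu,d}$ for $|\lambda|$ small and the degree bound forcing $d$ to be small, actually determines all the structure constants; this is where the "extra rigidity" of the equivariant parameters is used, and it follows by the same induction on $(|\lambda|, d)$ (lexicographically, say, with $\lambda$ minimal in Bruhat order among the nonzero terms) as in \cite[\S 3]{mihalcea:eqschub}: one chooses $\nu$ so that the coefficient $w_\nu(\omega_k)-w_\lambda(\omega_k)$ is nonzero, which is possible whenever $c_{\lambda,\mu}^{\nu,d}\neq 0$ and $\lambda$ is not already handled, and then solves for $c_{\lambda,\mu}^{\nu,d}$ in terms of constants with strictly smaller $|\lambda|$ or strictly smaller $d$. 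The remaining verifications — that the sums over $\eta$ and $\xi$ are finite and range over the stated index sets, and that the $\lambda^*,\lambda^{**},\nu^+,\nu^{++}$ conventions are consistent on both sides — are direct consequences of Definition \ref{def:lambda*} and Proposition \ref{prop:dictionary}, and I would leave them to the reader as the paper indicates.
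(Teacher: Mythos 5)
Your proof is correct and follows exactly the approach of the paper: expand both sides of the associativity identity $[X(1)]_T \star ([X(\lambda)]_T \star [X(\mu)]_T) = ([X(1)]_T \star [X(\lambda)]_T) \star [X(\mu)]_T$ using the equivariant quantum Chevalley formula and compare coefficients of $q^d[X(\nu)]_T$, noting that the $w_0(\omega_k)$ contributions on the diagonal cancel. The final paragraph about the recursion actually determining the constants is really the content of Theorem \ref{thm:alg} rather than of this proposition, but including it does no harm.
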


\begin{proof} This is an immediate calculation obtained by collecting the coefficient of $q^d [X(\nu)]_T$ in both sides of the associativity equation $[X(1)]_T \star ([X(\lambda)]_T \star [X(\mu)]_T) = ([X(1)]_T \star [X(\lambda)]_T) \star [X(\mu)]_T$.\end{proof} 

The system of equations (\ref{E:rec}) gives a recursive procedure to calculate any structure constant $c_{\lambda,\mu}^{\nu,d}$. We briefly recall the main ideas, following \cite{mihalcea:eqschub}, where a similar equation appeared in the study of the equivariant quantum cohomology of Grassmannians; see \cite{mihalcea:eqqhom, BCMP:eqkt} for more general algorithms. The procedure can be summarized as follows: given $\lambda, \mu, \nu \in \Lambda$ and $d$ a degree, the first sum contains coefficients $c_{\eta, \mu}^{\nu,d}$ where $\eta$ is smaller in Bruhat order than $\lambda$; the second sum contains coefficients $c_{\lambda, \mu}^{\xi,d}$ where $\xi$ is larger than $\nu$ in Bruhat order; the remaining terms involve degree $d-1 < d$, known inductively. Given this, the recursion can be run whenever $w_\nu(\omega_k) - w_\lambda(\omega_k) \neq 0$, which is equivalent to asking that $\lambda \neq \nu$. If $\lambda = \nu$ one runs the recursion for the coefficient $c_{\mu,\lambda}^{\nu,d} = c_{\lambda, \mu}^{\nu,d}$, using commutativity of the quantum ring. If $\lambda = \mu = \nu$ one uses the system of equations (\ref{E:rec}) to write down a linear equation in the unknown coefficient $c_{\lambda, \lambda}^{\lambda,d}$ where all other terms in this equation will be known recursively; see \cite[Prop. 6.2]{mihalcea:eqschub}, and also \cite[Prop. 7.4]{mihalcea:eqqhom} or \cite[Prop.5.4]{BCMP:eqkt} for similar statements. The existence of the linear equation in $c_{\lambda, \lambda}^{\lambda,d}$ requires that the linear form $F_{\lambda, \nu}:= w_\lambda(\omega_k) - w_\nu(\omega_k)$ is a nonzero, positive, combination of simple roots whenever $w_\lambda < w_\nu$ in Bruhat ordering. This follows easily by induction on the length $\ell(w_\nu) - \ell(w_\lambda)$. Alternatively, $F_{\lambda, \nu} = \tilde{c}_{\nu, (1)}^\nu - \tilde{c}_{\lambda, (1)}^\lambda$, and the required positivity follows from \cite[Appendix]{mihalcea:eqqhom}, applied to $G/P= \IG(k, 2n+2)$. This proves the following:

\begin{thm}\label{thm:alg}
The EQ coefficients are determined (algorithmically) by the following formulas:
\begin{enumerate}
\item $c_{(0),(0)}^{(0),d}=\begin{cases} 0 & d>0 ~\/; \\ 1 & d=0~ \/; \end{cases}$
\item (commutativity) $c_{\lambda,\zeta}^{\mu,d}=c_{\zeta,\lambda}^{\mu,d}$ for al partitions $\lambda,\zeta,$ and $\mu$; 
\item (EQ Chevalley) The coefficients $c_{(1),\lambda}^{\mu,d}$ from theorem \ref{thm:eqqchev}, for all partitions $\lambda$ and $\mu$, and all degrees $d$;
\item The system of equations (\ref{E:rec}) for all partitions $\lambda,\zeta,\mu$ such that $\lambda \neq \nu$.
\end{enumerate}
\end{thm}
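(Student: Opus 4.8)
The plan is to run the same terminating Noetherian induction that was used for the Grassmannian in~\cite[\S6]{mihalcea:eqschub} (compare~\cite[\S7]{mihalcea:eqqhom}, \cite[\S5]{BCMP:eqkt}), observing that items~(1)--(3) require essentially no work: (1) holds because the partition $(0,\dots,0)\in\Lambda$ indexes $\IG$ itself, so $[X((0,\dots,0))]_T$ is the unit of $\QH^*_T(\IG)$ and $1\star 1=1$; (2) is the commutativity of the quantum product; and (3) is Theorem~\ref{thm:eqqchev}. The real content is that (1)--(4) let one compute \emph{every} constant $c_{\lambda,\mu}^{\nu,d}$ by a finite process.

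First I would set up the outer induction on the degree $d$, assuming all constants of degree $<d$ known. Fixing $d$, I inspect the recursion~(\ref{E:rec}): each term of its right-hand side is either of degree $d-1$, hence known, or a degree-$d$ constant $c_{\eta,\mu}^{\nu,d}$ with $|\eta|=|\lambda|+1$ (so $\ell(w_\eta)<\ell(w_\lambda)$), or a degree-$d$ constant $c_{\lambda,\mu}^{\xi,d}$ with $|\xi|=|\nu|-1$. Thus, after dividing by the coefficient $w_\nu(\omega_k)-w_\lambda(\omega_k)$ --- which is nonzero exactly when $\lambda\neq\nu$, since $\omega_k$ is stabilized by $W_{P_k}$ and hence $w\mapsto w(\omega_k)$ is injective on $W^{P_k}$ --- equation~(\ref{E:rec}) expresses $c_{\lambda,\mu}^{\nu,d}$ in terms of constants of strictly smaller complexity, where I assign to $c_{\lambda,\mu}^{\nu,d}$ the lexicographically ordered pair $(d,\ \ell(w_\lambda)+\ell(w_\mu)+|\nu|)$. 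This measure is symmetric in $\lambda$ and $\mu$, so combining the recursion with commutativity (item~(2)), which replaces $c_{\lambda,\mu}^{\nu,d}$ by $c_{\mu,\lambda}^{\nu,d}$ and reduces the case $\lambda=\nu\neq\mu$ to the case $\lambda\neq\nu$, computes every constant whose index triple $(\lambda,\mu,\nu)$ is not of the diagonal form $(\rho,\rho,\rho)$.

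The delicate case is the fully diagonal constant $c_{\lambda,\lambda}^{\lambda,d}$, for which the two instances of~(\ref{E:rec}) reachable by symmetry both have vanishing leading coefficient. Following~\cite[Prop.~6.2]{mihalcea:eqschub}, the plan is to isolate $c_{\lambda,\lambda}^{\lambda,d}$ as the \emph{unique} remaining unknown of a single linear equation assembled from the system~(\ref{E:rec}) --- for $\lambda$ not the unit, using a suitable partition $\lambda'\neq\lambda$ with $|\lambda'|=|\lambda|-1$ and $\lambda'+1\to\lambda+1$, in whose recursion $c_{\lambda,\lambda}^{\lambda,d}$ occurs with coefficient $2^{A(\lambda',\lambda)}\neq0$ --- and then to solve it. What must be checked is that no cancellation among the instances used collapses the coefficient of the unknown to zero; this reduces to the positivity statement that $F_{\lambda,\nu}:=w_\lambda(\omega_k)-w_\nu(\omega_k)$ is a nonzero nonnegative integral combination of simple roots whenever $w_\lambda<w_\nu$ in the Bruhat order. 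I would obtain this positivity by induction on $\ell(w_\nu)-\ell(w_\lambda)$ via the non-quantum equivariant Chevalley formula for $H^*_T(\IG^{ev})$; alternatively it follows from $F_{\lambda,\nu}=\tilde{c}_{\nu,(1)}^{\nu}-\tilde{c}_{\lambda,(1)}^{\lambda}$ together with~\cite[Appendix]{mihalcea:eqqhom} applied to $G/P=\IG(k,2n+2)$.

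The main obstacle I anticipate is bookkeeping rather than new ideas: making the Noetherian induction watertight when branches of~(\ref{E:rec}) generate diagonal triples or when commutativity swaps are inserted, and in particular fixing the precise order in which the diagonal constants are extracted so that every term they depend on is genuinely already known. Because the shape of the Chevalley rule~(\ref{E:eqqchev}) matches the Grassmannian one almost verbatim --- classical part governed by the Bruhat order, no mixed equivariant--quantum terms, and exactly two quantum terms each of coefficient $1$ --- I would transcribe this bookkeeping from~\cite{mihalcea:eqschub} rather than redo it. Corollary~\ref{cor:alg} is then immediate: an algebra $(A,+,\circ)$ as in its statement satisfies (1)--(3) by hypothesis and satisfies item~(4) because Proposition~\ref{prop:recursion} is deduced purely from associativity and commutativity of $\circ$; hence the algorithm forces the structure constants of $A$ in the basis $\{a_\lambda\}$ to agree with those of $\QH^*_T(\IG)$, so $a_\lambda\mapsto[X(\lambda)]_T$ is a ring isomorphism.
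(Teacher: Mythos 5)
Your proposal is correct and follows the same route the paper takes: outer induction on $d$, then on a decreasing complexity of the triple $(\lambda,\mu,\nu)$, running the recursion~(\ref{E:rec}) whenever $w_\nu(\omega_k)\neq w_\lambda(\omega_k)$ (i.e.\ $\lambda\neq\nu$), using commutativity to reduce $\lambda=\nu\neq\mu$ to that case, isolating the diagonal constant $c_{\lambda,\lambda}^{\lambda,d}$ from a single linear equation à la \cite[Prop.~6.2]{mihalcea:eqschub}, and invoking positivity of $F_{\lambda,\nu}$ (either by length induction or via \cite[Appendix]{mihalcea:eqqhom}) to guarantee that equation is solvable. The only notable difference is that you make the termination measure explicit, which the paper leaves implicit by reference.
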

The theorem immediately implies Corollary \ref{cor:alg}, stated in the introduction. 

\section{Examples}\label{s:examples} 
In this section we present several examples. All multiplications are in the equivariant quantum cohomology ring, but we will ignore the subscripts $T$. The Chevalley formula for $\QH_T^*(\IG(3,7))$ is: 
%\begin{small} 
\begin{center} 
 \begin{tabular}{|c | c|}
\hline $\lambda$ & $[X(1)]\star [X(\lambda)]$ \\
\hline $(1)$ & $ -(2 t_4)[X(1)]+[X(4,-1,-1)]+2[X(2)] $ \\
\hline $ (2)$ & $ -(2t_3)[X(2)]+[X(4,0,-1)]+[X(2,1)]+2[X(3)]$ \\
\hline $ (4,0,-1) $ & $-(t_1+t_3)[X(4,0,-1)]+[X(4,1,-1)]+[X(4)] $ \\
\hline $(2,1)$ &  $ -2(t_3+t_4)[X(2,1)]+[X(4,1,-1)]+2[X(3,1)]$ \\
\hline $(3)$ & $-(2t_2)[X(3)]+[X(3,1)]+[X(4)]$ \\
\hline $(4,1,-1) $ & $ -(t_1+t_3+2t_4)[X(4,1,-1)]+[X(4,2,-1)]+[X(4,1)] $ \\
\hline $(3,1)$ & $-2(t_2+t_4)[X(3,1)]+[X(4,1)]+2[X(3,2)]$ \\
\hline $(4)$ & $ -(t_1+t_2)[X(4)]+[X(4,1)]+q $ \\
\hline $(4,2,-1)$ & $ -(t_1+2t_3+t_4)[X(4,2,-1)]+[X(4,2)]+2[X(4,3,-1)] $ \\
\hline $(3,2)$  &  $ -2(t_2+t_3)[X(3,2)]+[X(3,2,1)]+[X(4,2)] $ \\
\hline $(4,1)$ &  $-(t_1+t_2+2t_4)[X(4,1)]+[X(4,3,-1)]+2[X(4,2)]+q[X(1)]$ \\
\hline $(4,3,-1)$ & $-(t_1+2t_2+t_4)[X(4,3,-1)]+[X(4,3)]+q[X(4,-1,-1)] $ \\
\hline $(3,2,1)$ &  $ -2(t_2+t_3+t_4)[X(3,2,1)]+[X(4,2,1)]$ \\
\hline $ (4,2) $ &  $-(t_1+t_2+2t_3)[X(4,2)]+[X(4,2,1)]+[X(4,3)]+q[X(2)]$ \\
\hline $ (4,2,1) $ &  $-(t_1+t_2+2t_3+2t_4)[X(4,2,1)]+[X(4,3,1)]+q[X(2,1)] $ \\
\hline $(4,3)$ & $ -(t_1+2t_2+t_3)[X(4,3)]+[X(4,3,1)]+q[X(4,0,-1)]+q[X(3)]$ \\
\hline $ (4,3,1)$ &  $ -(t_1+2t_2+t_3+2t_4)[X(4,3,1)]+[X(4,3,2)]
+ q[X(4,1,-1)]+q[X(3,1)]$ \\
\hline $(4,3,2)$ & $ -(t_1+2t_2+2t_3+t_4)[X(4,3,2)]+q[X(4,2,-1)]+q[X(3,2)]$ \\
\hline
 \end{tabular}

\end{center}

\subsection{Multiplication table for $\QH_T^*(\IG(2,5))$}\label{ss:qht25}
\begin{small}
%\begin{eqnarray*}
%[X(0)]&\star&[X(\lambda)]=[X(\lambda)] \mbox{ for all }\lambda \in \Lambda.\\
%\hline\\
\begin{eqnarray*}
\mbox{}[X(1)]&\star&[X(1)]=-2t_3[X(1)]+[X(3,-1)]+2[X(2)]\\
\mbox{}[X(1)]&\star&[X(2)]=-2t_2[X(2)]+[X(2,1)]+[X(3)]\\
\mbox{}[X(1)]&\star&[X(3,-1)]=-(t_1+t_3)[X(3,-1)]+[X(3)]\\
\mbox{}[X(1)]&\star&[X(2,1)]=-2(t_2+t_3)[X(2,1)]+[X(3,1)]\\
\mbox{}[X(1)]&\star&[X(3)]=-(t_1+t_2)[X(3)]+[X(3,1)]+q\\
\mbox{}[X(1)]&\star&[X(3,1)]=-(t_1+t_2+2t_3)[X(3,1)]+[X(3,2)]+q[X(1)]\\
\mbox{}[X(1)]&\star&[X(3,2)]=-(t_1+2t_2+t_3)[X(3,2)]+q [X(2)]+q[X(3,-1)] \\
\hline\\
\mbox{}[X(2)]&\star&[X(2)]=2t_2(t_2-t_3)[X(2)]-2t_2[X(2,1)]-(t_2-t_3)[X(3)]+[X(3,1)] \\ 
\mbox{}[X(2)]&\star&[X(3,-1)]=-(t_1+t_2)[X(3)]+q \\
\mbox{}[X(2)]&\star&[X(2,1)]=2t_2(t_2+t_3)[X(2,1)]-(t_2+t_3)[X(3,1)]+[X(3,2)]\\
\mbox{}[X(2)]&\star&[X(3)]=(t_1+t_2)(t_2-t_3)[X(3)]-(t_1+t_2)[X(3,1)]-(t_2-t_3)q+q[X(1)]\\
\mbox{}[X(2)]&\star&[X(3,1)]=(t_1 + t_2) (t_2+t_3)[X(3,1)]-(t_1+t_2)[X(3,2)]-(t_2+t_3)q[X(1)]\\
&+&q[X(3,-1)]+q[X(2)]\\
\mbox{}[X(2)]&\star&[X(3,2)]=2t_2(t_1+t_2)[X(3,2)]-(t_1+t_2)q[X(3,-1)]-2t_2q[X(2)]+q[X(3)]\\
\hline\\
\mbox{}[X(3,-1)]&\star&[X(3,-1)]=(t_1^2-t_3^2)[X(3,-1)]+(t_2+t_3)[X(3)]+[X(3,1)]-q\\
\mbox{}[X(3,-1)]&\star&[X(2,1)]=-(t_1+t_2)[X(3,1)]-[X(3,2)]+q[X(1)]\\
\mbox{}[X(3,-1)]&\star&[X(3)]=(t_1^2-t_2^2)[X(3)]+[X(3,2)]-(t_1-t_2)q\\
\mbox{}[X(3,-1)]&\star&[X(3,1)]=(t_1^2-t_2^2)[X(3,1)]-(t_2+t_3)[X(3,2)]-(t_1-t_2)q[X(1)]+q[X(2)]\\
\mbox{}[X(3,-1)]&\star&[X(3,2)]=(t_1^2-t_3^2)[X(3,2)]-(t_1-t_3)q[X(2)]+q[X(2,1)]\\
\hline\\
\mbox{}[X(2,1)]&\star&[X(2,1)]=-4t_2t_3(t_2+t_3)[X(2,1)]+2t_3(t_2+t_3)[X(3,1)]-2(t_2+t_3)[X(3,2)]\\
&+&q[X(3,-1)]\\
\mbox{}[X(2,1)]&\star&[X(3)]=(t_1+t_2)(t_2+t_3)[X(3,1)]-(t_1-t_2)[X(3,2)]-(t_2+t_3)q[X(1)] + q[X(2)]\\
\mbox{}[X(2,1)]&\star&[X(3,1)]=-2t_3(t_1+t_2)(t_2+t_3)[X(3,1)]+2t_1(t_2+t_3)[X(3,2)]\\
&-&(t_1+t_3)q[X(3,-1)]+2t_3(t_2+t_3)q[X(1)]-2(t_2+t_3)q[X(2)]+q[X(3)]\\
\mbox{}[X(2,1)]&\star&[X(3,2)]=-2t_2(t_1+t_3)(t_2+t_3) [X(3,2)]+(t_2+t_3)(t_1+t_3)q[X(3,-1)]\\
&+&2t_2(t_2+t_3)q[X(2)]-(t_1+2t_2+t_3)q[X(3)]+q^2\\
\hline\\
\mbox{}[X(3)]&\star&[X(3)]=-(t_1+t_2)(t_1t_2-t_1t_3-t_2^2+t_2t_3)[X(3)]+(t_1^2-t_2^2)[X(3,1)]-2t_2[X(3,2)]\\
&+&(t_1t_2-t_1t_3-t_2^2+t_2t_3)q-(t_1-t_2)q[X(1)]+q[X(3,-1)]+q[X(2)]\\
\mbox{}[X(3)]&\star&[X(3,1)]=-(t_1^2t_2+t_1^2t_3-t_2^3-t_2^2t_3)[X(3,1)]+(t_1^2+t_2^2+2t_2t_3)[X(3,2)]\\
&-&(t_1+t_3)q[X(3,-1)]+(t_1t_2+t_1t_3-t_2^2-t_2t_3)q[X(1)]-(t_1+t_2)q[X(2)]\\
&+&q[X(2,1)]+q[X(3)]\\
\mbox{}[X(3)]&\star&[X(3,2)]=-2t_2(t_1^2-t_3^2)[X(3,2)]+(t_1^2-t_3^2)q[X(3,-1)]\\
&+&2t_2(t_1-t_3)q[X(2)]-2t_2q[X(2,1)]-(t_1-t_3)q[X(3)]+q[X(3,1)]\\
\hline
\mbox{}[X(3,1)]&\star&[X(3,1)]= 2t_3 (t_1^2 - t_2^2)(t_2 + t_3)[X(3,1)]-2(t_1^2t_2+t_1^2t_3+t_2^2t_3+t_2t_3^2)[X(3,2)]\\
&+&(t_1^2+2t_1t_3+t_3^2)q[X(3,-1)]-2t_3(t_1t_2+t_1t_3-t_2^2-t_2t_3)q[X(1)]\\
&+&2t_1(t_2+t_3)q[X(2)]-2(t_2+t_3)q[X(2,1)]-(2t_1+t_2+t_3)q[X(3)]+q[X(3,1)]+q^2\\
\mbox{}[X(3,1)]&\star&[X(3,2)]=2t_2(t_1^2t_2+t_1^2t_3-t_2t_3^2-t_3^3)[X(3,2)]-(t_2+t_3)(t_1^2-t_3^2)q[X(3,-1)]\\
&-& 2t_2(t_1t_2+t_1t_3-t_2t_3-t_3^2)q[X(2)]+2t_2(t_2+t_3)q[X(2,1)]\\
&+&(t_1+2t_2+t_3)(t_1-t_3)q[X(3)]-(t_1+2t_2+t_3)q[X(3,1)]-(t_1-t_3)q^2+q^2[X(1)]
\end{eqnarray*}
\newpage
\begin{eqnarray*}
\mbox{}[X(3,2)]&\star&[X(3,2)]=-2t_2(t_1^2t_2^2-t_1^2t_3^2-t_2^2t_3^2+t_3^4)[X(3,2)]+(t_1^2t_2^2-t_1^2t_3^2-t_2^2t_3^2+t_3^4)q[X(3,-1)]\\
&+&2t_2(t_1t_2^2-t_1t_3^2-t_2^2t_3+t_3^3)q[X(2)]-2t_2(t_2^2-t_3^2)q[X(2,1)]\\
&-&(t_1+2t_2+t_3)(t_1t_2-t_1t_3-t_2t_3+t_3^2)q[X(3)]+(t_1^2+2t_1t_2+2t_2^2-t_3^2)q[X(3,1)]\\
&+&(t_1t_2-t_1t_3-t_2t_3+t_3^2)q^2-(t_1+t_2)q^2[X(1)]+q^2[X(2)]
\end{eqnarray*}
%\end{eqnarray*}
\end{small}

\def\cprime{$'$}
\providecommand{\bysame}{\leavevmode\hbox to3em{\hrulefill}\thinspace}
\providecommand{\MR}{\relax\ifhmode\unskip\space\fi MR }
% \MRhref is called by the amsart/book/proc definition of \MR.
\providecommand{\MRhref}[2]{%
  \href{http://www.ams.org/mathscinet-getitem?mr=#1}{#2}
}
\providecommand{\href}[2]{#2}

%\bibliography{QHoddv2}
%\bibliographystyle{amsplain}

\end{document}